\numberwithin{equation}{section}
\newtheorem{lem}{Lemma}[section]
\newtheorem{cor}[lem]{Corollary}
\newtheorem{prop}[lem]{Proposition}
\newtheorem{thm}[lem]{Theorem}
\theoremstyle{definition}
\newtheorem{defn}[lem]{Definition}
\renewcommand\ell{l}
\newcommand\supp{\operatorname{supp}}
\newcommand \tf{\tilde f}
\newcommand \tW{\widetilde W}
\newcommand\End{\operatorname{End}}
\begin{document}

\title{Resolution of Indecomposable Integral Flows\\ on Signed Graphs}

\author{Beifang Chen}
\address{Department of Mathematics, Hong Kong University of Science and
Technology, Clear Water Bay, Kowloon, Hong Kong}
\email{mabfchen@ust.hk}

\author{Jue Wang}
\address{Department of Mathematics and Physics, Shenzhen Polytechnic,
Shenzhen, Guangdong Province, 518088, P.R. China}
\email{twojade@alumni.ust.hk}

\author{Thomas Zaslavsky}
\address{Department of Mathematical Sciences, Binghamton University
(SUNY), Binghamton, NY 13902-6000, U.S.A.}
\email{zaslav@math.binghamton.edu}

\begin{abstract}
It is well known that each nonnegative integral flow on a graph can
be decomposed into a sum of nonnegative graphic circuit flows, which
cannot be further decomposed into nonnegative integral sub-flows.
This is equivalent to saying that the indecomposable flows on graphs
are those graphic circuit flows. Turning from graphs to signed
graphs, the indecomposable flows are much richer than those of
unsigned graphs. This paper gives a complete description of
indecomposable flows on signed graphs from the viewpoint of
resolution of singularities by means of double covering graphs.
\end{abstract}

\subjclass[2000]{Primary 05C22; Secondary 05C20, 05C21, 05C38}

\keywords{Signed graph, double covering graph, sesqui-Eulerian
signed graph, prime sesqui-Eulerian signed graph, sesqui-Eulerian
circle-tree, indecomposable integral flow}

\date{\today}

\thanks{The research of the first author was supported by RGC Competitive Earmarked
Research Grants 600608, 600409, and 600811.}

\maketitle


\section{Introduction}

A {\em signed graph} is a graph in which each edge is given either a
positive or a negative sign.
A real or integral flow (or ``circulation'') on an ordinary unsigned
graph is a real- or integer-valued function on (oriented)
edges such that the net inflow to each vertex is zero. Analogously,
a {\em real flow} on a signed graph is a real-valued function on
(oriented) signed edges such that the net inflow to each vertex is
zero, and an \emph{integral flow}, a concept introduced by
Bouchet \cite{Bouchet}, is a flow whose values are integers.
The theory of flows on ordinary graphs is the specialization of the
signed-graph theory to the case in which all edges are positive.

There are many reasons to be interested in integral flows on graphs;
important ones are their connection to integer programming through
network optimization and their relationship to graph structure through the
analysis of  {\em conformally indecomposable flows}, that
is, integral flows that cannot be decomposed as the sum of two
integral flows whose flow values have the same sign on each edge
(both $\geq 0$ or both $\leq 0$). It is well known, and an important
observation in the theory of integral network flows, that the
indecomposable flows are identical to the \emph{circuit flows},
which are flows on circuits of the graphic matroid and which (in a suitable orientation of the graph) have value $1$ on the edges of a graph circuit (i.e., a connected 2-regular subgraph) and $0$ elsewhere.
The extension of the theory of indecomposable integral flows to signed graphs by Chen and Wang
\cite{Chen-Wang3}, carried out by an algorithmic method, led to the remarkable
discovery that, besides the anticipated circuit flows, which are already more
complicated in signed graphs than in ordinary graphs, there are many
``strange'' indecomposable flows with
elaborate self-intersection structure not describable by circuits of the signed graph.
Chen and Wang obtained a classification of indecomposable flows by means of their algorithm.

The present paper, by contrast, has a structural approach.
We characterize indecomposable flows by the method
of covering graphs, lifting each vertex and each edge of a signed
graph to two vertices and two edges of a double covering graph.
The strange indecomposable flows are regarded as singular phenomena, which we resolve by lifting them (blowing up repeated vertices and overlapping edges) to ordinary circuit flows in the covering graph.
In comparison to the algorithmic approach in \cite{Chen-Wang3}, the present paper hints at a connection (at least conceptually) between graph theory and resolution of singularities through covering spaces, as we think of lifting as a combinatorial analogue of resolution of singularities in algebraic geometry.
We believe this connection may be useful for studying gain graphs \cite{BG1}, which are more complicated than signed graphs.

The resolution process leads, via ``sesqui-Eulerian circle-trees" (Definition \ref{cycle-tree-defn}), to the following half-integral conformal decomposition (see Theorem \ref{Half-Integer Scale Decomposition}(d)):
\emph{Every nonzero integral flow on a signed graph can be conformally decomposed into a
half-integral positive linear combination of signed-graphic circuit flows.}
(For this purpose the half-integers include the integers.)

\section{Graphs and Signed Graphs}

\subsection{Graphs}\

A {\em graph} $G$ is a pair consisting of a {\em vertex set} $V(G)$ and
an {\em edge set} $E(G)$, such that each edge $x\in E(G)$ is associated
with a multiset $\End(x)$ of two vertices, called the endpoints of $x$.
(We use letters $e, x, y, z$ for edges.)
The edge $x$ is called a {\em link} if the two vertices of $\End(x)$ are
distinct and is called a {\em loop} if the two vertices are identical.
Let $x$ be an edge with $\End(x)=\{u,v\}$; we say that $x$ is {\em incident} with
$u$ and $v$, and that $u$ and $v$ are {\em adjacent} by $x$.

An incidence can be treated as an object, denoted by $(u,x)$ (which we think of as the \emph{end} of $x$ at $u$); thus the edge $x$ has the two incidences, or \emph{ends}, $(u,x)$ and $(v,x)$.  Although a loop $x$ (where $u=v$) has the same endpoint twice, we treat its two ends as two distinct objects; this is necessary in order to define orientation of signed edges.  When occasionally the notation must distinguish the two ends of a loop we write them as $(u,x), (v,x)$ as if $x$ were a link.  For a vertex $v$ we define
\[
\End(v) = \{\text{edge ends $(v,e)$ incident with } v\};
\]
in particular a loop at $v$ has two distinct ends in $\End(v)$.

A {\em cut-edge} is an edge whose deletion increases the number of connected components.
A \emph{cut-vertex} is a vertex whose deletion, together with all incident edges, increases the
number of components, or that is incident with a loop and at least one other edge.
A \emph{circle} (also known as a circuit, cycle, or polygon) is a subgraph that is connected and regular of degree $2$, or the edge set of such a subgraph.  The collection of circles of a graph $G$ forms the circuit system of a matroid, known as the {\em graphic matroid}, on the edge set of $G$.

A \emph{block} is a maximal connected subgraph without cut-vertices.  Thus, loops, cut-edges, and isolated vertices are blocks.  We call blocks \emph{adjacent} if they have a common vertex (which is necessarily a cut-vertex).  An \emph{end block} is a block adjacent to exactly one other block.  A \emph{circle block} is a block that is a circle.

A \emph{walk} of length $n$ in a graph is a sequence of vertices and edges,
\[
W = v_0e_1v_1e_2\cdots v_{n-1}e_nv_n,
\]
such that $\End(x_i)=\{u_{i-1},u_i\}$.  The {\em initial vertex} is $v_0$, $v_n$ is the
{\em terminal vertex}, and $v_1,\ldots,v_{n-1}$ are {\em internal vertices} of $W$.
We write
\[
W^{-1} = v_ne_nv_{n-1}\cdots e_2v_1e_1v_0
\]
for the same walk in the reverse direction; its initial vertex is $v_n$ and its terminal vertex is $v_0$.
A {\em subwalk} of $W$ is a subsequence of the form $v_ie_{i+1}v_{i+1}e_{i+2} \cdots v_{k-1}e_kv_k$.
A walk is \emph{closed} if $n\geq 1$ and $v_0=v_n$ and \emph{open} otherwise.  A walk is
a {\em trail} if it has no repeating edges, and
an {\em open path} if it has no repeating vertices (and consequently no repeating edges), and a {\em closed path} if it has no repeating vertices except that $v_0=v_n$.
The graph of a closed path is a circle.  (The difference is that a closed path has an initial and terminal vertex and a direction.)
Note that every walk, including a closed walk, has an initial vertex and a terminal vertex, which are identical if the walk is closed.

\subsection{Signed Graphs}\

A \emph{signed graph} $\Sigma=(G,\sigma)$ consists of an unsigned graph $G$ together
with a \emph{sign function} $\sigma: E(G)\to \{+1,-1\}$. We usually
write $V(G)$ and $E(G)$ as $V(\Sigma)$ and $E(\Sigma)$, respectively.
Subgraphs of $\Sigma$ inherit the edge signs from $\Sigma$.

The sign of a walk $W=v_0e_1v_1e_2\cdots v_{n-1}e_nv_n$ is the product
\[
\sigma(W)=\prod_{i=1}^n\sigma(e_i).
\]
In particular, the sign of a circle is the product of the signs of its edges.
A subgraph (or its edge set) is \emph{balanced} if every circle in it has positive sign.

A \emph{signed-graph circuit} is a subgraph (or its edge set) of the following three types:
\begin{enumerate}[\hspace{2ex} (1)]
\item
A positive circle, said to be of {\em Type I}.
\item
A pair of negative circles whose intersection is a single vertex, said to be of {\em Type II}.
\item
A pair of vertex-disjoint negative circles together with a path of positive length that connects the two circles and is
internally disjoint from the two circles, said to be of {\em Type III}.
\end{enumerate}
The connecting path in Type III and the common vertex in Type II are called the {\em circuit path} of the circuit.

The circuits of a signed graph $\Sigma$ form the circuit system of a matroid on the edge set of $\Sigma$ \cite{SG}, the \emph{frame matroid} of the signed graph $\Sigma$; such a matroid is called a {\em signed-graphic matroid}.
(Type II and Type III circuits are named {contrabalanced tight handcuffs} and {contrabalanced loose handcuffs} respectively by Zaslavsky \cite{SG}. We do not use these names here.)
An ordinary unsigned graph is viewed as a signed graph whose edges are all positive; so all of its circles have positive sign and the frame matroid of an unsigned graph coincides with the graphic matroid.

\subsection{Orientation}\

A \emph{bidirection} of a graph (a concept introduced by
Edmonds~\cite{MMP}) is a function $\omega$ from the set of all edge
ends to the sign group, $\{-1,+1\}$. We view a positive value
$\omega(u,e)$ as denoting an arrow at the end $(u,e)$ directed along
the edge $e$ toward the endpoint $u$, and a negative value as an
arrow directed away from the endpoint.  Recall that we treat a loop
$e=uv$ (with $u=v$) as having distinguishable ends $(u,e)$ and
$(v,e)$.

An \emph{orientation} of a signed graph $\Sigma$ \cite{OSG} is a bidirection
$\omega$ on its underlying graph such that for each edge $e$, with endpoints $u$ and $v$,
\begin{equation*}
\sigma(e) = -\omega(u,e)\omega(v,e).
\end{equation*}
So a positive edge $e$ must have two arrows in the same direction along $e$, indicating a direction of $e$ as in an ordinary directed graph.
A negative edge $e$ has two opposite arrows, which both point toward or both point away from its endpoints.
We let $(\Sigma,\omega)$ denote an oriented signed graph throughout.

A \emph{sink} in $(\Sigma,\omega)$ is a vertex $v$ at which all
edges point toward $v$, that is, $\omega(v,e)=+1$ for all edges $e$
at $v$. Conversely, a \emph{source} is a vertex $v$ at which all
edges at $v$ point away from $v$, that is, $\omega(v,e)=-1$ for all edges $e$ at $v$.

Two oriented edges $e_1,e_2\in E$ having a common endpoint $v$,
with the orientations $\omega(v,e_1)$ and $\omega(v,e_2)$, are {\em coherent at $v$} if
\begin{equation*}
\omega(v,e_1)\omega(v,e_2)=-1, \quad \text{i.e.,}\quad
\omega(v,e_1)+\omega(v,e_2)=0.
\end{equation*}
This means that $e_1$ and $e_2$ have a common direction (locally) at
their common endpoint $v$. A walk $W=v_0e_1v_1e_2 \cdots
v_{n-1}e_nv_n$ in $(\Sigma,\omega)$ is \emph{coherent at $v_i$} when
$e_{i}$ and $e_{i+1}$ are coherent at $v_i$; that is, when
\[
\omega(v_i,e_i)\omega(v_i,e_{i+1})=-1,\quad \text{i.e.,}
\quad
\omega(v_i,e_i)+\omega(v_i,e_{i+1})=0;
\]
and similarly a closed walk is {\em coherent at $v_0$} $(=v_n)$ when
\[
\omega(v_0,e_1)\omega(v_n,e_n)=-1, \quad \text{i.e.,}
\quad
\omega(v_0,e_1)+\omega(v_n,e_n)=0.
\]
The walk $W$ is a {\em coherent walk} when it is coherent at its internal vertices and, if it is closed, also coherent at the initial and terminal vertex.
(Coherence is meaningless at the initial and terminal vertices of an open walk.)
A simple fact is:

\begin{lem}\label{L:walksign}
In an oriented signed graph, the sign of a walk of positive length
equals $(-1)^{\ell+1} \omega(v_0,e_1)\omega(v_n,e_n)$,
where $\ell$ is the number of incoherent internal vertices.
If the walk is closed, its sign equals $(-1)^k$, where
$k$ is the total number of times that the walk is incoherent at vertices,
including the initial and terminal vertex (which is counted as a single vertex).
\end{lem}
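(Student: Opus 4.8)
The plan is to compute directly from the definitions, turning the product formula $\sigma(W)=\prod_{i=1}^n\sigma(e_i)$ into local coherence data by substituting the orientation identity $\sigma(e_i)=-\omega(v_{i-1},e_i)\omega(v_i,e_i)$ for each edge and then regrouping the $\omega$-factors according to the vertex at which they sit. The point is that this regrouping converts a product indexed by edges into a product indexed by the internal vertices, with only the two extreme factors $\omega(v_0,e_1)$ and $\omega(v_n,e_n)$ surviving unpaired.

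First I would write
\[
\sigma(W)=\prod_{i=1}^n\bigl(-\omega(v_{i-1},e_i)\omega(v_i,e_i)\bigr)=(-1)^n\prod_{i=1}^n\omega(v_{i-1},e_i)\omega(v_i,e_i).
\]
The key observation is purely a matter of bookkeeping: in the remaining product of $\omega$-values, the factor $\omega(v_0,e_1)$ (from $e_1$) and the factor $\omega(v_n,e_n)$ (from $e_n$) each occur exactly once, while for each internal vertex $v_i$ with $1\le i\le n-1$ the two factors $\omega(v_i,e_i)$ and $\omega(v_i,e_{i+1})$ occur, arising from the two consecutive edges $e_i,e_{i+1}$ meeting at $v_i$. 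Collecting the factors at each internal vertex gives
\[
\prod_{i=1}^n\omega(v_{i-1},e_i)\omega(v_i,e_i)=\omega(v_0,e_1)\,\Bigl[\,\prod_{i=1}^{n-1}\omega(v_i,e_i)\omega(v_i,e_{i+1})\,\Bigr]\,\omega(v_n,e_n),
\]
and each bracketed factor is exactly the coherence indicator at $v_i$, equal to $+1$ precisely when $W$ is incoherent at $v_i$ and to $-1$ when it is coherent.

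Then I would count parities. Among the $n-1$ internal vertices, $\ell$ are incoherent (contributing $+1$) and $n-1-\ell$ are coherent (contributing $-1$), so the bracket equals $(-1)^{n-1-\ell}$. Substituting and using $(-1)^{2n}=1$ yields
\[
\sigma(W)=(-1)^n(-1)^{n-1-\ell}\,\omega(v_0,e_1)\omega(v_n,e_n)=(-1)^{\ell+1}\omega(v_0,e_1)\omega(v_n,e_n),
\]
which is the first assertion. For the closed case I would feed this identity back into the definition of coherence at the single initial/terminal vertex $v_0=v_n$: writing $\epsilon=1$ if $W$ is incoherent there and $\epsilon=0$ otherwise, the defining condition gives $\omega(v_0,e_1)\omega(v_n,e_n)=(-1)^{1+\epsilon}$, whence $\sigma(W)=(-1)^{\ell+1}(-1)^{1+\epsilon}=(-1)^{\ell+\epsilon}=(-1)^{k}$ with $k=\ell+\epsilon$ the total number of incoherences, as claimed.

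I do not expect any genuine obstacle here; the entire content is the telescoping regrouping, so the only real risk is a miscount—confusing which $\omega$-factor is shared between adjacent edges (the internal ones) and which is unshared at the two ends, or an off-by-one in the exponent $\ell+1$. To guard against this I would verify the bookkeeping on a length-$2$ open walk and on a closed bigon before trusting the general formula, since those cases already exhibit both the unpaired end factors and the single internal coherence factor.
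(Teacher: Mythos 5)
Your proposal is correct and follows essentially the same route as the paper's proof: substitute $\sigma(e_i)=-\omega(v_{i-1},e_i)\,\omega(v_i,e_i)$ into $\sigma(W)=\prod_i\sigma(e_i)$, regroup the $\omega$-factors by vertex so that only $\omega(v_0,e_1)$ and $\omega(v_n,e_n)$ remain unpaired, read off each internal factor as the coherence indicator, and handle the closed case via coherence at $v_0=v_n$. The only difference is cosmetic bookkeeping — you extract $(-1)^n$ up front and count parities, while the paper keeps one minus sign attached to each internal-vertex factor so the product is immediately $(-1)^\ell$ — and both computations agree.
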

\begin{proof}
We perform a short calculation. Let $W=v_0e_1v_1\cdots e_nv_n$ be the walk.  Its sign is
\begin{align*}
\sigma(W) &= \prod_{i=1}^n \sigma(e_i)
= \prod_{i=1}^n \big(-\omega(v_{i-1},e_i)\omega(v_{i},e_i)\big) \\
&= -\omega(v_0,e_1)\omega(v_n,e_n) \prod_{i=1}^{n-1}
\big(-\omega(v_{i},e_i)\omega(v_{i},e_{i+1})\big) \\
&=  -(-1)^{\ell}\omega(v_0,e_1)\omega(v_n,e_n)  \\
& = (-1)^k \quad \text{ if $W$ is closed.}
\qedhere
\end{align*}
\end{proof}

A \emph{direction} of $W$ is an assignment $\omega_W$ to each edge $e_i$ in $W$ of an orientation that is coherent at all internal vertices.  (The walk orientation $\omega_W$ is separate from the orientation $\omega$ of $\Sigma$.)
Every walk of positive length has exactly two directions, opposite to each other. A {\em directed walk} $(W,\omega_W)$ is a walk $W$ with a direction $\omega_W$.

Let $S\subseteq E$ be an edge set. The {\em reorientation} of $\Sigma$ by $S$
is the orientation $\omega_S$ obtained from $\omega$ by
reversing the orientations of the edges in $S$ and keeping the
orientations of edges outside $S$ unchanged. Thus $\omega_S$ is
given by
\[
\omega_S(v,e) =\begin{cases} -\omega(v,e) &\text{ if } e \in S, \\
\phantom{-} \omega(v,e) &\text{ if } e \notin S. \end{cases}
\]

Let $\omega_i$ be orientations on subgraphs $\Sigma_i$ of
$\Sigma$, $i=1,2$. The {\em coupling} of $\omega_1$ and $\omega_2$
is a function $[\omega_1,\omega_2]: E(\Sigma)\to\{-1,0,+1\}$, defined for each edge $e$ (having endpoint $v$) by
\begin{equation*}
[\omega_1,\omega_2](e)=\begin{cases}
\phantom{-}1 & \text{if } e\in E(\Sigma_1)\cap E(\Sigma_2)$, $\omega_1(v,e)=\omega_2(v,e), \\
-1 & \text{if } e\in E(\Sigma_1)\cap E(\Sigma_2)$, $\omega_1(v,e)\neq \omega_2(v,e), \\
\phantom{-} 0 & \text{otherwise.}
\end{cases}
\end{equation*}
(The definition is independent of which endpoint $v$ is.)
One may extend $\omega_i$ to $\Sigma$ by requiring $\omega_i(v,e)=0$
whenever the edge $e$ is not incident with the vertex $v$ in
$\Sigma_i$. We always assume this extension automatically. Then
alternatively,
\[
[\omega_1,\omega_2](e) =\omega_1(v,e)\,\omega_2(v,e).
\]

\subsection{The Double Covering Graph}\

\subsubsection{Defining the covering graph}

The {\em double covering graph} of $\Sigma$ is an unsigned graph
$\widetilde\Sigma$ whose vertex and edge sets are
\[
V(\widetilde\Sigma)=V(\Sigma)\times\{+,-\} \quad\text{and}\quad
E(\widetilde\Sigma)=E(\Sigma)\times \{+,-\},
\]
with adjacency defined as follows:  If vertices
$u,v\in V(\Sigma)$ are adjacent by an edge $e\in E(\Sigma)$, then the vertices
$(u,\alpha)$ and $(v,\alpha\,\sigma(e))$ in $V(\widetilde\Sigma)$
are adjacent by an edge in $E(\widetilde\Sigma)$, and the
vertices $(u,-\alpha)$ and $(v,-\alpha\,\sigma(x))$ in
$V(\widetilde\Sigma)$ are adjacent by another edge in $E(\widetilde\Sigma)$.
We denote these two edges by $\tilde e$ and $\tilde e^*$ (the signs on edges in $E(\widetilde\Sigma)$ are not edge signs; they are a notational convenience to ensure that $E(\widetilde\Sigma)$ contains two copies of each edge of $\Sigma$).
For simplicity, we write $u^\alpha=(u,\alpha)$.
When $e$ is a negative loop at its unique endpoint $v$, the edges
$\tilde e$ and $\tilde e^*$ are two parallel edges in $\widetilde\Sigma$ with
the endpoints $v^{+}$ and $v^-$.

We may think of $V(\widetilde\Sigma)$ as having two levels:
\begin{align*}
V^+ &= \{v^+:v\in V(\Sigma)\}\ \text{(the positive level)}, \\
V^- &= \{v^-:v\in V(\Sigma)\}\ \text{(the negative level)}.
\end{align*}
A positive edge is lifted to two edges, one inside the positive level and
the other inside the negative level; a negative edge is
lifted to two edges crossing between the two levels.
It is therefore impossible to lift all edges of
$\Sigma$ to the same level when $\Sigma$ is unbalanced.

The asterisk marks a canonical involutory, fixed-point-free
graph automorphism ${}^*$ of $\widetilde\Sigma$, defined by
\[
(v^\alpha)^* = v^{-\alpha}, \hspace{2ex} (\tilde e)^* = \tilde e^*, \hspace{2ex} (\tilde e^*)^* = \tilde e.
\]
There is also a canonical graph homomorphism
$\pi:\widetilde\Sigma\to\Sigma$, called the {\em projection}
of $\widetilde\Sigma$ to $\Sigma$, which is a pair of functions
$\pi_V:V(\widetilde\Sigma)\to V(\Sigma)$ and
$\pi_E:E(\widetilde\Sigma)\to E(\Sigma)$,
defined respectively by
\[
\pi_V(v^\alpha)=v \quad\text{and}\quad \pi_E(\tilde e)=e.
\]
Usually, we write $\pi_V$ and $\pi_E$ simply as $\pi$.

Often it is convenient to denote $\tilde e$ and $\tilde e^*$ by $e^\beta$ and $e^{-\beta}$, respectively, for some (arbitrary) choice of $\beta\in\{+,-\}$.  If $u$ and $v$ are the endpoints of $e$, then
\[
e^\beta = u^\alpha v^{\alpha\,\sigma(e)}, \quad e^{-\beta}=u^{-\alpha}v^{-\alpha\,\sigma(e)}, \quad \text{ for some } \alpha\in\{+,-\}.
\]
In the symbol $e^\beta$, $\beta$ is not related to the sign of the
edge $e$ in $\Sigma$. (The choice of whether $\tilde e$ is called
$e^+$ or $e^-$ does not change the double covering graph; it is only
a choice of names for edges.)

\subsubsection{Orienting the double covering graph}
An orientation $\omega$ on $\Sigma$ lifts to an orientation
$\widetilde\omega$ on $\widetilde\Sigma$, called the {\em lift} of
$\omega$. Let $e\in E(\Sigma)$ be an edge incident with
a vertex $v\in V(\Sigma)$, and let $e$ be lifted to an edge
$e^\beta\in E(\widetilde\Sigma)$ incident with a vertex $v^\alpha\in V(\widetilde\Sigma)$.
Define
\begin{equation}\label{Lift-Orientation}
\widetilde\omega(v^\alpha,e^\beta) = \alpha\,\omega(v,e).
\end{equation}
Since $v^{-\alpha}$ is an endpoint of the lifted edge $e^{-\beta}$, then by definition
\[
\widetilde\omega(v^{-\alpha},e^{-\beta}) =-\alpha\,\omega(v,e).
\]
The two arrows on each lifted edge $e^\beta$
are directed the same way along the edge, regardless of the sign $\sigma(e)$. In
fact, for each edge $e$ with endpoints $u$ and $v$ (possibly $u=v$), we have
\[
\widetilde\omega(u^\alpha,e^\beta)\,
\widetilde\omega(v^{\alpha\,\sigma(e)},e^\beta)
=\alpha\,\omega(u,e)\,\alpha\,\sigma(e)\,\omega(v,e) =-1.
\]
This means that the two arrows on $e^\beta$ have the same direction, as in
Figure~\ref{Lift-Orientation-Link} for a link and
Figure~\ref{Lift-Orientation-Loop} for a loop. So
$(\widetilde\Sigma,\widetilde\omega)$ is an ordinary oriented unsigned graph.
\begin{figure}[h]
\centering
\subfigure[$\sigma(e)=1$]{\includegraphics[width=25mm]{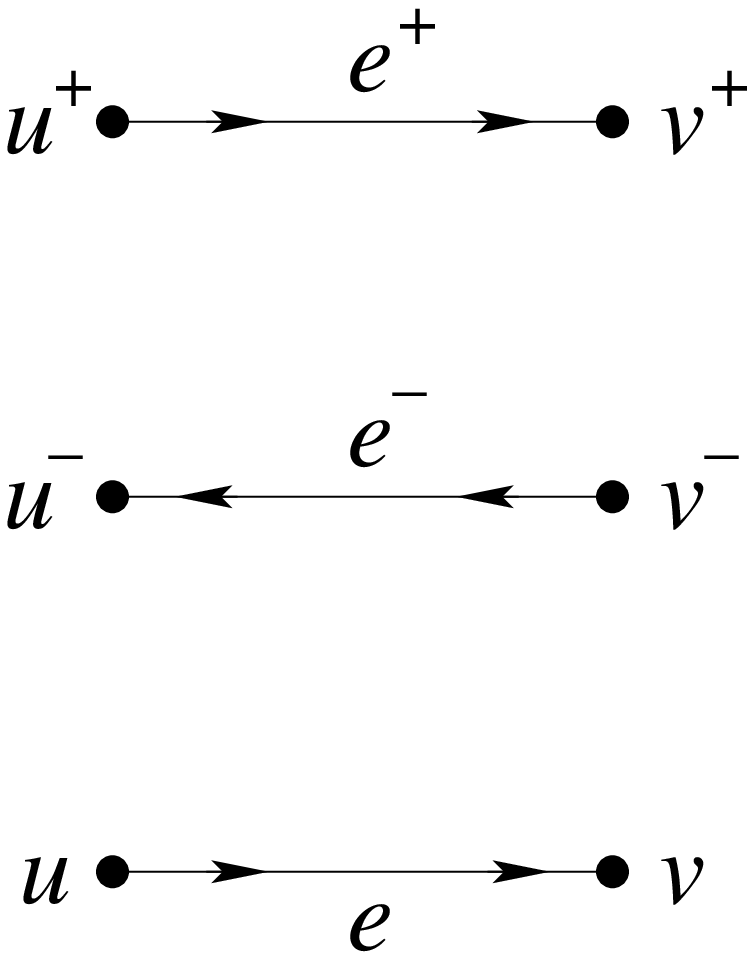}}
\hspace{10mm}
\subfigure[$\sigma(e)=-1$]{\includegraphics[width=25mm]{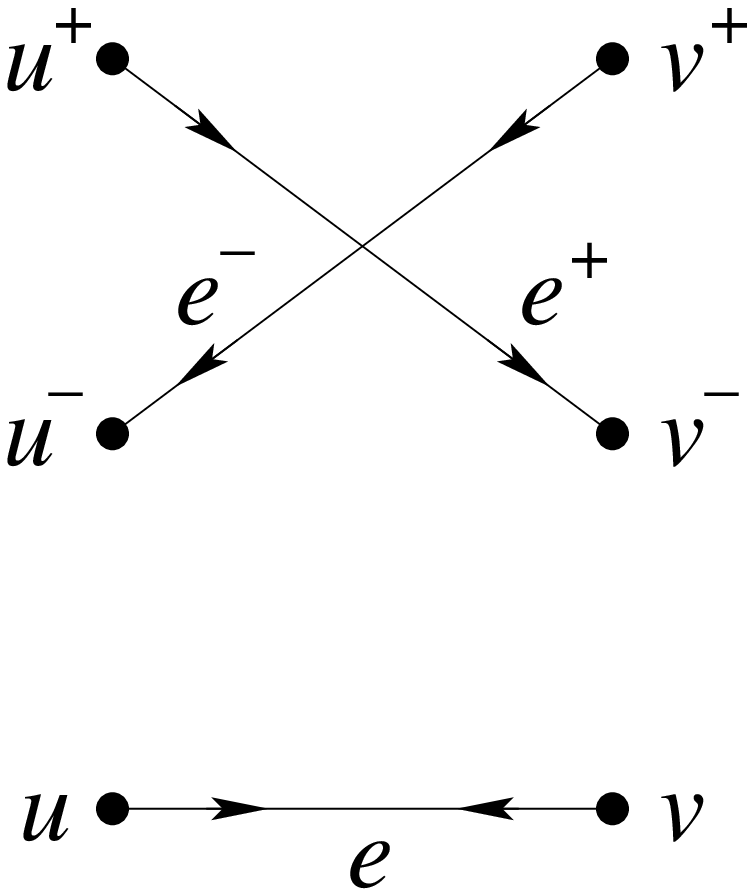}}
\hspace{10mm}
\subfigure[$\sigma(e)=-1$]{\includegraphics[width=25mm]{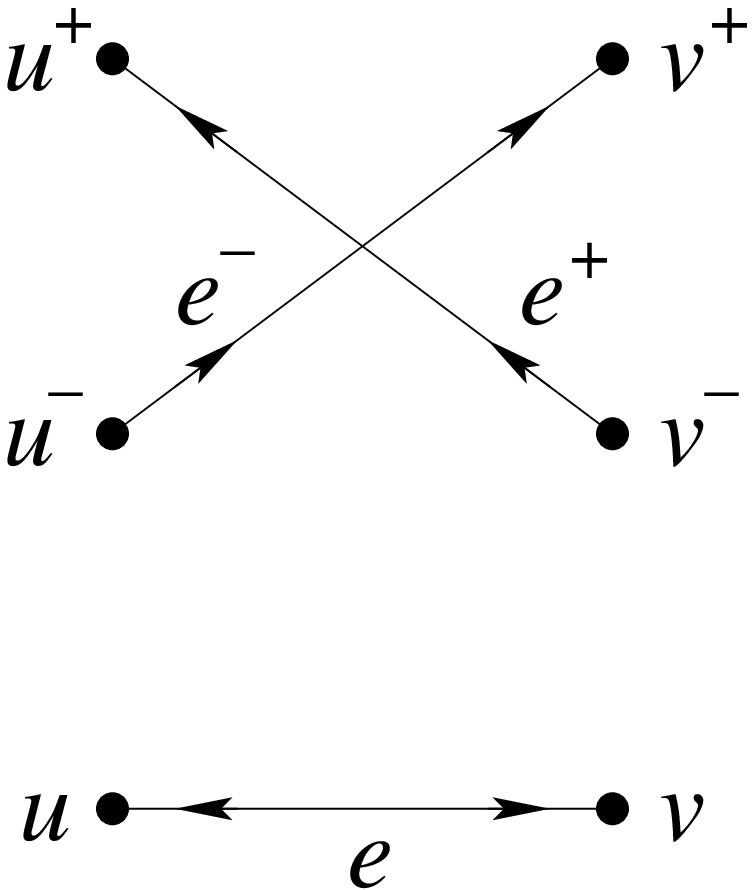}}
\caption{Lifting of a link and its orientation.}\label{Lift-Orientation-Link}
\end{figure}
\begin{figure}[h]
\centering
\includegraphics[width=12.8mm]{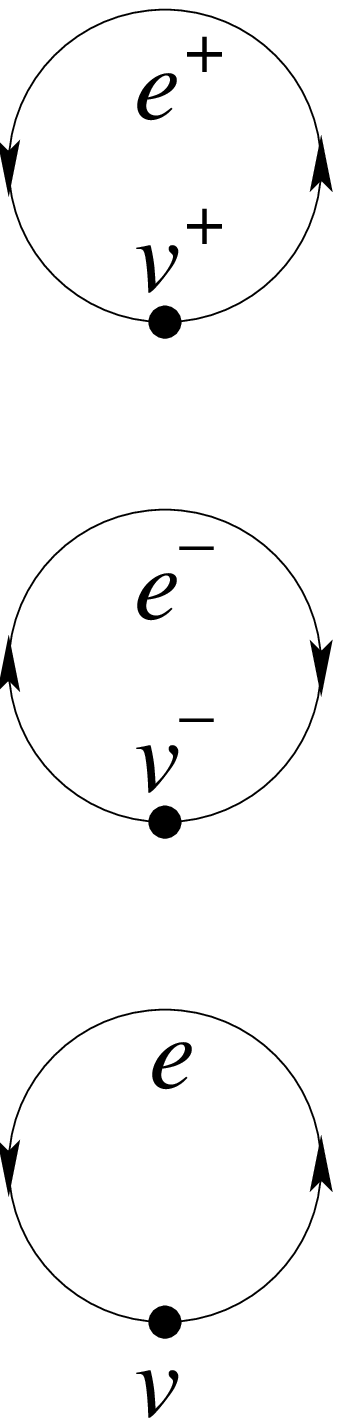}
\hspace{15mm}
\includegraphics[width=18mm]{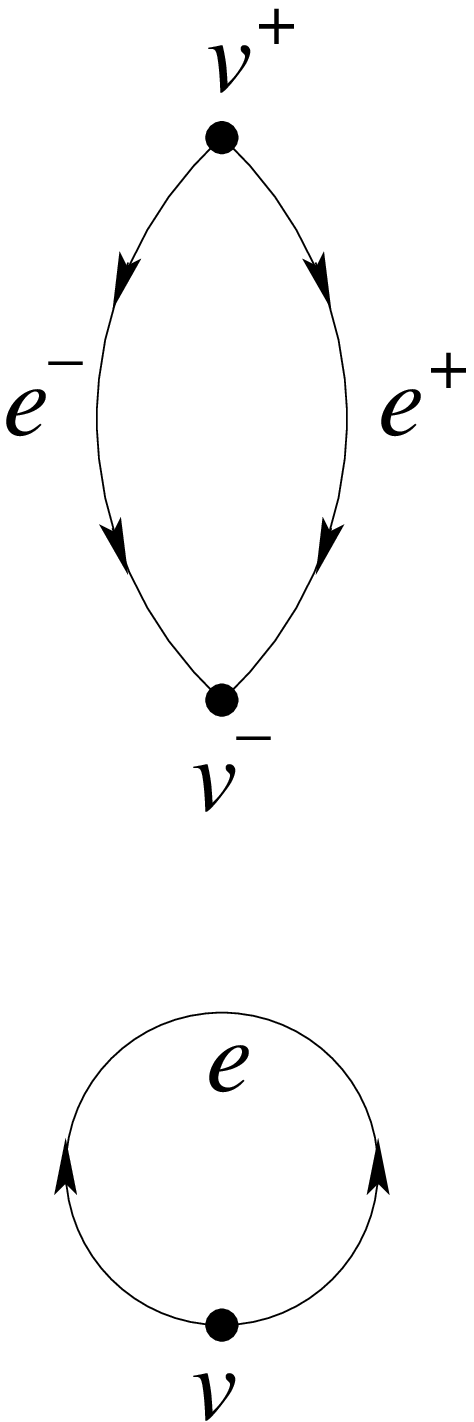}
\hspace{15mm}
\includegraphics[width=18mm]{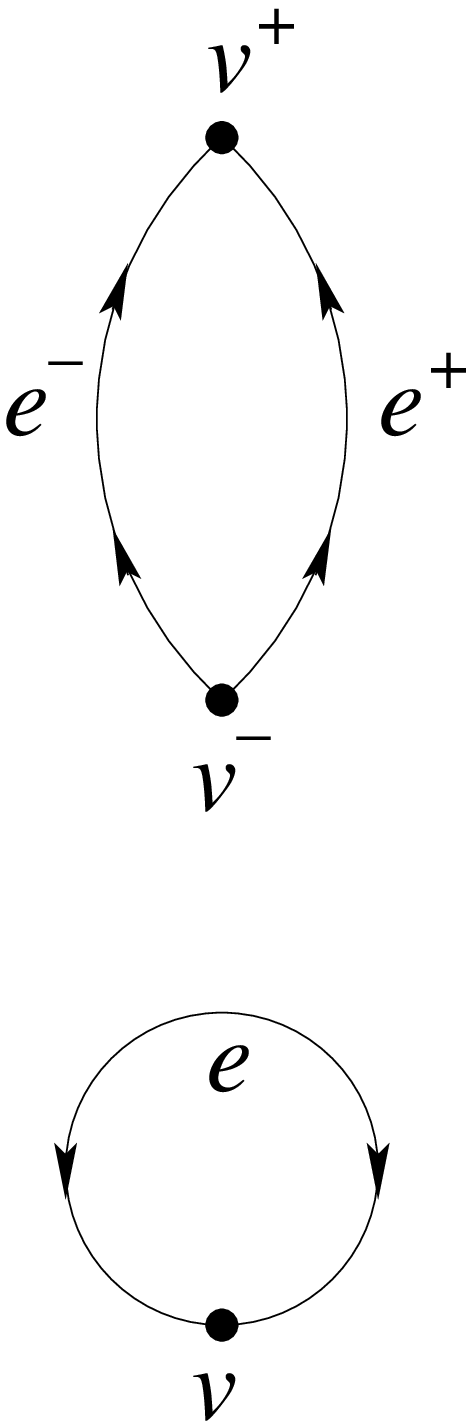}
\caption{Lifting of a loop and its orientation.}\label{Lift-Orientation-Loop}
\label{Fig1}
\end{figure}

The projection $\pi$ maps an edge $e^\beta$ with orientation
$\widetilde\omega(v^\alpha,e^\beta)$ in $\widetilde{\Sigma}$ to the
edge $e$ with orientation
$\omega(v,e):=\alpha\widetilde\omega(v^\alpha,e^\beta)$.

Lifting of orientations preserves coherence. If
$e_1$ and $e_2$ are edges with a common endpoint $v$, with the
orientations $\omega(v,e_1)$ and $\omega(v,e_2)$, then the
concatenated lift $\tilde{e}_1 v^\alpha\tilde{e}_2$ of
$e_1ve_2$, with the orientations
$\widetilde\omega(v^\alpha,\tilde{e}_1)$ and
$\widetilde\omega(v^\alpha,\tilde{e}_2)$, is coherent at
$v^\alpha$ if and only if $e_1ve_2$ is coherent at $v$. This fact
follows from
\[
\widetilde\omega(v^\alpha,\tilde{e}_1)\widetilde\omega(v^\alpha,\tilde{e}_2)
= \alpha\,\omega(v,e_1)\cdot\alpha\,\omega(v,e_2)
= \omega(v,e_1)\,\omega(v,e_2).
\]

Let $\omega_i$ be orientations on subgraphs $\Sigma_i$ of $\Sigma$, $i=1,2$.
The lifted graphs $\widetilde{\Sigma}_i$ are subgraphs of $\widetilde\Sigma$, and the
$(\widetilde{\Sigma}_i,\widetilde{\omega}_i)$ are subgraphs
of the oriented graph $(\widetilde\Sigma,\widetilde\omega)$.
Moreover, lifting of orientations preserves the coupling, that is,
\begin{equation}\label{Lift-Coupling-Preserving}
[\widetilde{\omega}_1,\widetilde{\omega}_2](e^\beta)
=[\omega_1,\omega_2](e)
\end{equation}
for each lift $e^\beta$ of an edge $e$ in $E(\Sigma)$.  Indeed,
\begin{align*}
[\widetilde{\omega}_1,\widetilde{\omega}_2](e^\beta)
&=\widetilde{\omega}_1(v^\alpha,e^\beta)\, \widetilde{\omega}_2(v^\alpha,e^\beta)
=\alpha\,\omega_1(v,e)\cdot\alpha\,\omega_2(v,e)
=[\omega_1,\omega_2](e).
\end{align*}

Let $W$ be a walk in $\Sigma$ of length $n$ with the vertex-edge
sequence $v_0e_1v_1e_2 \cdots v_{n-1}e_nv_n$ and a direction
$\omega_W$. We may lift $W$ to a walk $\tW$ in
$\widetilde{\Sigma}$ as follows:
Select an initial vertex $v^{\alpha_0}$; define
\begin{equation}\label{Lift-of-Walk}
\tW= v^{\alpha_0}_0 \tilde{e}_1 v^{\alpha_1}_1
\tilde{e}_2 \cdots v^{\alpha_{n-1}}_{n-1} \tilde{e}_n v^{\alpha_n}_n, \quad \text{ where }
\alpha_{i}=\alpha_{i-1}\sigma(e_{i}), \quad
\tilde{e}_i=v_{i-1}^{\alpha_{i-1}}v_i^{\alpha_i}.
\end{equation}
We call $\tW$ a \emph{lift} of $W$.
A lift is a {\em resolution} of $W$ if $\tW$ is an open or closed path in $\widetilde{\Sigma}$.
There are exactly two lifts of $W$ since there are exactly two choices for $\alpha_0$.
Moreover, the orientation $\widetilde{\omega}_W$ on the edges in $\tW$ lifted from $\omega_W$ by
\eqref{Lift-Orientation} forms a direction of $\tW$. Thus $(W,\omega_W)$ lifts
to exactly two directed walks, $(\tW, \widetilde{\omega}_W)$ and $(\tW^*, \widetilde{\omega}_W^*)$, where
\begin{gather*}
\tW^* = v^{-\alpha_0}_0 \tilde{e}_1^* v^{-\alpha_1}_1 \tilde{e}_2^* \cdots v^{-\alpha_{n-1}}_{n-1} \tilde{e}_n^* v^{-\alpha_n}_n, \\
\widetilde{\omega}_W^*(v_{i}^{-\alpha_{i}},e_i) =
-\alpha_{i}\omega_W(v_{i},e_i),\quad 1\leq i\leq n.
\end{gather*}
We call $W$ the {\em projection} of $\tW$ and $\tW^*$ and we write $W=\pi(\tW)=\pi(\tW^*)$.

\begin{lem}\label{L:closedwalklift}
\begin{enumerate}[\rm (a)]
\item
The projection $\pi: \widetilde\Sigma \to \Sigma$ induces an
incidence-preserving bijection from $\End(v^\alpha)$ to $\End(v)$
for each $v\in V(\Sigma)$ and $\alpha \in \{+,-\}$.
\item
The projections of directed walks in $\widetilde\Sigma$ are directed walks in $\Sigma$.
The projections of closed walks are closed.
\item
Let $W$ be a closed walk in $\Sigma$ with initial and terminal vertices $v_0$ and $v_n$, and let $\omega_W$ be a direction of $W$. Let $(\tW,\widetilde{\omega}_W)$ be a
lift of the directed walk $(W,\omega_W)$. If $W$ is positive, then
$(\tW,\widetilde{\omega}_W)$ is a directed closed walk. If
$W$ is negative, then $(\tW,\widetilde{\omega}_W)$ is a directed open walk.
\end{enumerate}
\end{lem}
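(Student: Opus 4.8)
The plan is to prove the three parts in order, as part~(a) supplies the local bijection of ends on which the walk statements in~(b) and~(c) rest.

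For part~(a), I would consider the map $\End(v^\alpha)\to\End(v)$ sending $(v^\alpha,\tilde e)$ to $(v,\pi(\tilde e))$; it is incidence-preserving automatically since $\pi$ is a graph homomorphism, so only bijectivity needs argument. The key observation is that the deck transformation ${}^*$ is an incidence-preserving automorphism of $\widetilde\Sigma$ that commutes with $\pi$ (because $\pi\circ{}^*=\pi$) and interchanges the two levels, $(v^\alpha)^*=v^{-\alpha}$, while interchanging the two lifts $\tilde e,\tilde e^*$ of each edge $e$. Hence for a fixed end $(v,e)$ its two lifts are carried to each other by ${}^*$ and therefore sit at opposite levels, exactly one at $v^\alpha$; this gives a bijection $\End(v^\alpha)\to\End(v)$. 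The one point requiring care is a loop $e$ at $v$, whose two ends must be kept distinct: inspecting the adjacency rule $e^\beta=u^\alpha v^{\alpha\sigma(e)}$ shows that a positive loop lifts to loops at $v^+$ and $v^-$ and a negative loop to a pair of parallel $v^+v^-$ edges, and in each case a direct check confirms that each of the two loop-ends has exactly one lift at level $\alpha$.

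For part~(b), projecting the vertex-edge sequence of a walk $\tW$ gives a sequence $W=\pi(\tW)$ in $\Sigma$; consecutive edges $\tilde e_i,\tilde e_{i+1}$ of $\tW$ meet at $v_i^{\alpha_i}$, so by~(a) their projections meet at $v_i$, and $W$ is a genuine walk. Coherence transfers because the identity $\widetilde\omega(v^\alpha,\tilde e_1)\,\widetilde\omega(v^\alpha,\tilde e_2)=\omega(v,e_1)\,\omega(v,e_2)$ recorded before the lemma is an equality: $\tW$ is coherent at $v_i^{\alpha_i}$ precisely when $W$ is coherent at $v_i$, so the projected direction (the orientation projection set up before the lemma) is coherent at every internal vertex and $W$ is a directed walk. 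Finally, if $\tW$ is closed then $v_0^{\alpha_0}=v_n^{\alpha_n}$, forcing $v_0=v_n$, so $W$ is closed.

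For part~(c), I would read off the level of the terminal vertex of the lift from the recursion $\alpha_i=\alpha_{i-1}\sigma(e_i)$ in the lift formula~\eqref{Lift-of-Walk}: telescoping yields $\alpha_n=\alpha_0\prod_{i=1}^n\sigma(e_i)=\alpha_0\,\sigma(W)$. Since $W$ is closed we have $v_n=v_0$, so the terminal vertex of $\tW$ is $v_n^{\alpha_n}=v_0^{\alpha_0\sigma(W)}$. If $\sigma(W)=+1$ this is the initial vertex $v_0^{\alpha_0}$ and $\tW$ is closed; if $\sigma(W)=-1$ it is $v_0^{-\alpha_0}\neq v_0^{\alpha_0}$ and $\tW$ is open. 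In either case $\widetilde\omega_W$ is a direction of $\tW$ by the fact recorded before the lemma that a lifted direction is again a direction (equivalently, by the coherence identity above), so the lift is the claimed directed walk.

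The only step demanding genuine attention is the loop bookkeeping in~(a); once the local bijection is secured, part~(b) is a direct application of the coherence identity and part~(c) reduces to the one-line sign telescoping.
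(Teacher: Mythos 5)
Your proof is correct and follows essentially the same route as the paper: parts (a) and (b) are exactly the ``by definition of the projection'' content the paper leaves implicit (your deck-transformation and loop checks merely spell it out), and your telescoping $\alpha_n=\alpha_0\,\sigma(W)$ in part (c) is the same calculation as the paper's $\sigma(W)=\prod_{i=1}^n\alpha_{i-1}\alpha_i=\alpha_0\alpha_n$, which likewise yields closedness of $\tW$ if and only if $\sigma(W)=+1$.
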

\begin{proof} Parts (a) and (b) are by definition of the projection.

For part (c), let $W=v_0e_1v_1e_2\cdots v_{n-1}e_nv_n$ be a walk in
$\Sigma$, lifted to a walk
$\tW =v_0^{\alpha_0}\tilde{e}_1 v_1^{\alpha_1} \cdots \tilde{e}_nv_n^{\alpha_n}$
in $\widetilde\Sigma$. Since $\sigma(e_i)=\alpha_{i-1}\alpha_i$ by \eqref{Lift-of-Walk},
\[
\sigma(W)=\prod_{i=1}^n \sigma(e_i)
= \prod_{i=1}^n \alpha_{i-1}\alpha_i = \alpha_0 \alpha_n.
\]
Thus, $\sigma(W)=+1$ if and only if $\alpha_n=\alpha_0$. It follows that the walk $\tW$ is closed if and only if $W$ has positive sign.
\end{proof}

\section{Flows}

\subsection{Flows on Signed Graphs}\

The {\em incidence matrix} \cite{MTS} of an oriented signed graph
$(\Sigma,\omega)$ is the $V\times E$ matrix ${\bm M}={\bm M}(\Sigma,\omega)=[{\bm m}(v,e)]$,
where $\bm m$ is a function ${\bm m}: V\times E\to{\mathbb Z}$ defined by
\begin{equation}\label{incidence-matrix}
{\bm m}(v,e)=\sum_{v\in\End(e)}\omega(v,e)
=\begin{cases}
\omega(v,e) & \text{if $e$ is a link},\\
2\,\omega(v,e) & \text{if $e$ is a negative loop},\\
0 & \text{otherwise}.
\end{cases}
\end{equation}
When $e$ is a loop with $\End(e)=\{v,v\}$ we add $\omega(v,e)$ for each end of $e$, so the sum is
$0$ when $e$ is a positive loop and is $2\,\omega(v,e)$ when $e$ is a negative loop.

An \emph{integral flow} on an oriented signed graph $(\Sigma,\omega)$ is a function $f: E(\Sigma) \to \mathbb Z$ which is
\emph{conservative} at every vertex, meaning that the net contribution to each vertex is zero.
The {\em boundary operator} of $(\Sigma,\omega)$, $\partial:{\mathbb Z}^{E(\Sigma)}\to{\mathbb Z}^{V(\Sigma)}$, is defined by
\begin{equation}\label{Boundary-Operator}
\partial f(v) = \sum_{e \in E} {\bm m}(v,e) f(e)
= \sum_{(v,e)\in\End(v)} \omega(v,e)f(e).
\end{equation}
Thus a function $f: E(\Sigma) \to \mathbb Z$ is a flow if and only if $\partial f$ is identically zero.

The set of all integral flows on $(\Sigma,\omega)$ forms a $\mathbb Z$-module, called the
\emph{flow lattice} by Chen and Wang, who developed its basic theory
in \cite{Chen-Wang1}. One can define flows with values in an
arbitrary abelian group, for example, the additive group of real numbers and finitely
generated abelian groups.
Many of the following remarks are applicable for such flows.
We omit the word ``integral'' when mentioning integral flows.

The theory of flows on signed graphs depends essentially on the
graph and the sign function but not on the orientation, since only the notation changes when edges are reoriented.
Specifically, a flow $f$ on $\Sigma$ with respect to an orientation $\omega$ represents the same flow on $\Sigma$ as $[\omega,\rho]f$ on $\Sigma$ with respect to another orientation $\rho$.  (Therefore it is correct to speak of ``flows on signed graphs''.)

The \emph{support} of a function $f: E(\Sigma) \to \mathbb Z$ is the set of
edges $e$ such that $f(e)\neq 0$; it is denoted by $\supp f$. We denote by
$\Sigma(f)$ the subgraph of $\Sigma$ whose edge set is $\supp f$ and whose vertex set consists of vertices incident with edges in $\supp f$. The \emph{zero flow} is the flow that is zero on all edges.
Flows other than the zero flow are referred to as {\em nonzero flows}.
A \emph{circuit flow} of a signed graph (as defined in \cite{Chen-Wang1}) is a flow
whose support is a signed-graph circuit, having values $\pm1$ on the edges of the
circles and $\pm2$ on the edges of the circuit path (for Type
III circuits). See Figure~\ref{Circuit-II-III} for circuit flows of
Type II and Type III.
\begin{figure}[h]
\centering \subfigure[Type
II]{\includegraphics[height=18mm]{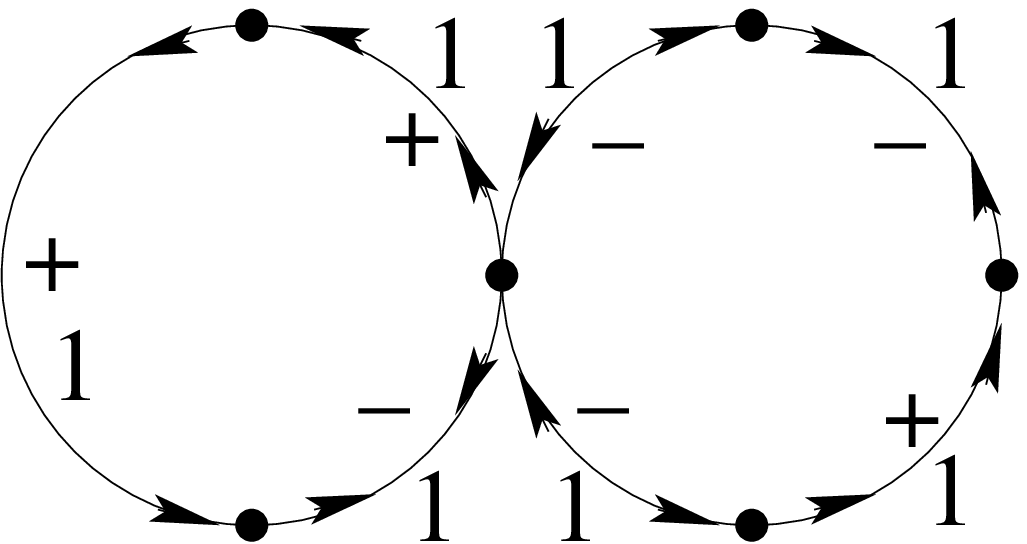}} \hspace{15mm}
\subfigure[Type III]{\includegraphics[height=18mm]{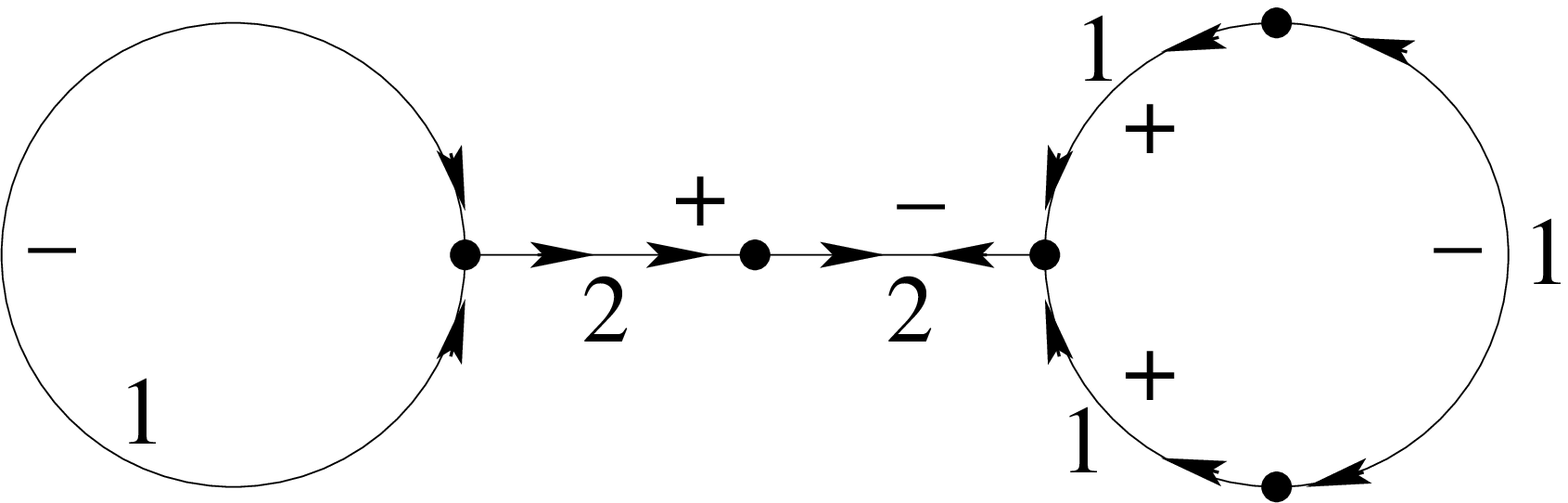}}
\caption{Signed-graph circuit flows of Types II and III.}\label{Circuit-II-III}
\label{Fig-circuits}
\end{figure}

An integral flow $f_1$ \emph{conforms to the sign
pattern of} $f$ if $\supp f_1 \subseteq \supp f$ and $f_1(e)$ has
the same sign as $f(e)$ for all edges $e$ in $\supp f_1$.

An integral flow $f$ on $(\Sigma,\omega)$ lifts to a flow on the
oriented double covering graph
$(\widetilde\Sigma,\widetilde\omega)$, possibly in more than one
way. The best way to see existence of a lift is through the
correspondence between integral flows and walks (when $\Sigma(f)$ is
connected).

A directed closed, positive walk $(W,\omega_W)$ on
$(\Sigma,\omega)$ corresponds to a unique integral flow
$f_{(W,\,\omega_W)}$, defined by
\begin{equation}\label{Flow-Walk-Defn}
f_{(W,\,\omega_W)}(e)=\sum_{e_i\in W,\; e_i=e} [\omega,\omega_W](e_i),
\end{equation}
where $W$ is viewed as a multiset $\{e_1,\ldots,e_n\}$ of edges if
$W=v_0e_1v_1\cdots e_nv_n$ (see \cite{Chen-Wang1}). Clearly,
$f_{(W,\,\omega_W)}(e)$ is the number of times $W$ traverses
the edge $e$ with $\omega_W$ and $\omega$ agreeing, minus the number
of times $W$ traverses $e$ while $\omega_W$ disagrees with $\omega$.
In case the direction $\omega_W$ is the same as $\omega$ restricted to $W$, we simply write $f_{(W,\,\omega_W)}$ as $f_W$ and we have
\begin{equation*}
f_W(e) = \big|\{e_i\in W:e_i=e\}\big| \;\; \text{(as a multiset)}.
\end{equation*}
To see why $f_{(W,\,\omega_W)}$ is a flow, consider the
contribution to $f_{(W,\,\omega_W)}$ of a pair of consecutive
edges, $e_iv_ie_{i+1}$, at the intervening vertex $v_i$. Since
$(W,\omega_W)$ is coherent at $v_i$, the contribution of these
edges to $\partial f_{(W,\,\omega_W)}(v_i)$ is $0$.  This same
argument applies to the initial vertex if we take subscripts modulo
the length of $W$.

We can apply the same definition of $f_{(W,\,\omega_W)}$ to any directed walk $(W,\omega_W)$, not necessarily closed or positive, but then the result may no longer be a flow. In
fact, we have the following lemma.

\begin{lem}\label{L:walkflow}
Let $(W,\omega_W)$ be a directed walk with $W = v_0e_1v_1e_2\cdots v_{n-1}e_nv_n$, $n>0$. Then the function
$f_{(W,\,\omega_W)}$ is conservative everywhere except possibly at $v_0$ and $v_n$; that is,
$
\partial f_{(W,\,\omega_W)}(v) = 0$ for all $v\neq v_0,v_n.
$
If $W$ is closed, then
\[
\partial f_{(W,\,\omega_W)}(v_0) = \begin{cases}
0    & \text{when $W$ is positive,} \\
2\omega_W(v_0,e_1) & \text{when $W$ is negative.}
\end{cases}
\]
If $W$ is open, then
\[
\begin{aligned}
\partial f_{(W,\,\omega_W)}(v_0) &= \omega_W(v_0,e_1), \\
\partial f_{(W,\,\omega_W)}(v_n) &= \omega_W(v_n,e_n) =-\sigma(W)\,\omega_W(v_0,e_1).
\end{aligned}
\]
\end{lem}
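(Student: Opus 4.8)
The plan is to decompose $f_{(W,\omega_W)}$ into one contribution per edge-traversal and then to account for the boundary end by end, rather than vertex by vertex. Writing $c_i=[\omega,\omega_W](e_i)$ for the coupling value carried by the $i$-th step, we have $f_{(W,\omega_W)}=\sum_{i=1}^n g_i$, where $g_i$ is the function equal to $c_i$ on $e_i$ and $0$ elsewhere (steps that repeat an edge simply add their values, reproducing \eqref{Flow-Walk-Defn}). Since $\partial$ is linear, $\partial f_{(W,\omega_W)}=\sum_{i=1}^n \partial g_i$, so it suffices to understand the boundary of a single step.

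First I would compute $\partial g_i$ at the two ends of $e_i$. Using the endpoint-independent identity $c_i=\omega(v,e_i)\,\omega_W(v,e_i)$ from the definition of the coupling, the contribution of an end $(v,e_i)$ to $\partial g_i(v)$ is $\omega(v,e_i)\,c_i=\omega_W(v,e_i)$, because $\omega(v,e_i)^2=1$. Thus step $i$ contributes exactly $\omega_W(v_{i-1},e_i)$ at its initial end, $\omega_W(v_i,e_i)$ at its terminal end, and nothing elsewhere. Crucially this is phrased in terms of edge-ends, so a loop (whose two ends are treated as distinct) and a revisited vertex require no special treatment.

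Next I would reorganize $\sum_i\partial g_i$ by walk-position. At each internal position $j$ with $1\le j\le n-1$, the incoming end $(v_j,e_j)$ and the outgoing end $(v_j,e_{j+1})$ together contribute $\omega_W(v_j,e_j)+\omega_W(v_j,e_{j+1})$, which vanishes because $\omega_W$, being a direction, is coherent at every internal vertex. All internal positions therefore cancel, and the only surviving ends are the outgoing end at position $0$, contributing $\omega_W(v_0,e_1)$, and the incoming end at position $n$, contributing $\omega_W(v_n,e_n)$. This at once yields $\partial f_{(W,\omega_W)}(v)=0$ for every graph-vertex $v\neq v_0,v_n$ (every visit of such a $v$ is internal), the open-walk values $\partial f(v_0)=\omega_W(v_0,e_1)$ and $\partial f(v_n)=\omega_W(v_n,e_n)$, and, when $W$ is closed so that $v_0=v_n$, the single value $\partial f(v_0)=\omega_W(v_0,e_1)+\omega_W(v_n,e_n)$. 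To finish, I would invoke Lemma~\ref{L:walksign} with the ambient orientation taken to be $\omega_W$: since $\omega_W$ is coherent at all internal vertices, its incoherence count is $0$, giving $\sigma(W)=-\omega_W(v_0,e_1)\,\omega_W(v_n,e_n)$ and hence $\omega_W(v_n,e_n)=-\sigma(W)\,\omega_W(v_0,e_1)$. Substituting this into the terminal value settles the open case, and in the closed case it gives $\partial f(v_0)=\omega_W(v_0,e_1)\bigl(1-\sigma(W)\bigr)$, which is $0$ when $W$ is positive and $2\,\omega_W(v_0,e_1)$ when $W$ is negative.

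The only delicate point — and the only place the argument could go wrong — is the bookkeeping at repeated vertices and at loops: the cancellation must be organized by edge-ends (equivalently, by walk-positions), not by graph-vertices, so that the two distinct ends of a loop and the several visits to a revisited vertex are each counted exactly once. Once the per-end decomposition is in place, everything reduces to the telescoping cancellation at internal positions together with a single application of Lemma~\ref{L:walksign}.
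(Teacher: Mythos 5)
Your proof is correct and is essentially the paper's own argument: both reduce $\partial f_{(W,\omega_W)}(v)$ to a sum of $\omega_W$-values over the edge-ends of $W$ at $v$ (using the coupling identity $\omega(v,e_i)\,[\omega,\omega_W](e_i)=\omega_W(v,e_i)$), cancel all internal walk-positions by coherence, and finish with Lemma~\ref{L:walksign} applied with zero incoherent internal vertices. Your decomposition $f_{(W,\omega_W)}=\sum_i g_i$ with linearity of $\partial$ is only a cosmetic repackaging of the paper's direct manipulation of the double sum, and your emphasis on bookkeeping by edge-ends rather than vertices matches how the paper handles loops and repeated vertices.
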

\begin{proof} Incoherence of $(W,\omega_W)$ can only possibly occur at $v_0$ and $v_n$.
Fix a vertex $v$. We have
\begin{align*}
\partial f_{(W,\,\omega_W)}(v)
&= \sum_{e\in E}{\bm m}(v,e) \sum_{e_i\in W,\, e_i=e} [\omega,\omega_W](e_i) \\
&= \sum_{e_i\in W} {\bm m}(v,e_i) [\omega,\omega_W](e_i) \\
&= \sum_{e_i\in W,\,v_{i-1}=v} \omega(v_{i-1},e_i) [\omega,\omega_W](e_i) + \sum_{e_i\in W,\,v_{i}=v} \omega(v_i,e_i) [\omega,\omega_W](e_i) \\
&= \sum_{e_i\in W,\,v_{i-1}=v} \omega_W(v_{i-1},e_i) + \sum_{e_i\in W,\,v_{i}=v} \omega_W(v_i,e_i) \\
&= \sum_{e_i\in W} \bm{m}(v,e_i).
\end{align*}
The second to last equality follows from the definition of coupling.

Let $v$ appear in the vertex-edge sequence of $W$ as
$v_{\ell_1},v_{\ell_2},\ldots,v_{\ell_m}$.
If $l_i\neq0, n$,
then $\omega_W(v_{l_i},e_{\ell_i})+\omega_W(v_{l_i},e_{\ell_i+1})=0$.
Thus
\[
\partial f_{(W,\,\omega_W)}(v) = \begin{cases}
\omega_W(v_0,e_1)+\omega_W(v_n,e_n) &\text{if } v_0=v_n=v, \\
\omega_W(v_0,e_1)  &\text{if } v_0=v \neq v_n, \\
\omega_W(v_n,e_n)  &\text{if } v_n=v \neq v_0, \\
0  &\text{if } v_0,v_n \neq v,
\end{cases}
\]
regardless of whether $W$ closed or open.

In the case that $W$ is closed and $v=v_0=v_n$, then
\[
\partial f_{(W,\,\omega_W)}(v) = \omega_W(v,e_1) [1 - \sigma(W)]
\]
by Lemma \ref{L:walksign}.
If $\sigma(W)$ is positive, then $\partial f_{(W,\,\omega_W)}(v_0)=0$.
If $\sigma(W)$ is negative, then $\partial f_{(W,\,\omega_W)}(v_0) = 2\omega_W(v_0,e_1) = 2\omega_W(v_n,e_n).$

In the case that $W$ is an open walk, namely, $v_0\neq v_n$, we have
$\partial f_{(W,\,\omega_W)}(v_0)=\omega_W(v_0,e_1)$ and
$\partial f_{(W,\,\omega_W)}(v_n)=\omega_W(v_n,e_n)=-\sigma(W)\omega_W(v_0,e_1)$.
\end{proof}

Conversely, directed closed walks can be constructed (though usually not uniquely) from integral flows.

\begin{prop}\label{P:flowwalk}
If $f$ is a nonnegative, nonzero integral flow on
$(\Sigma,\omega)$ such that $\Sigma(f)$ is connected, then there
exists a directed closed, positive walk $(W,\omega)$ on
$\Sigma(f)$ such that $f_W=f$.
\end{prop}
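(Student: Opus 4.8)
\emph{Plan.} The plan is to build the required walk greedily, as a signed-graph analogue of Hierholzer's algorithm, using Lemma \ref{L:walkflow} to control the boundary of the partial walks and Lemma \ref{L:walksign} to guarantee positivity at the end. Throughout I call a walk \emph{coherent} if it is coherent at all its internal vertices with respect to $\omega$, so that $\omega$ itself is a direction of it; I extend such a walk edge by edge, never traversing an edge $e$ more than $f(e)$ times, and I track the residual $g=f-f_T\ge 0$, where $f_T$ is the usage of the partial walk $T$. Since $g$ is a difference of flows it is again a flow, and it is nonnegative as long as $f_T\le f$. The goal is to produce a single closed coherent walk $W$ with $f_W=f$; it will then be positive by Lemma \ref{L:walksign}, lie on $\Sigma(f)$, and carry direction $\omega$.

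First I would prove a ``maximal walk closes up'' step: starting at any vertex $v_0$ incident to an edge with positive residual flow and departing along such an edge, a coherent walk extended as long as possible (always leaving the current vertex along a residual-positive edge whose end is oriented oppositely to the arriving end) must terminate back at $v_0$ and be coherent there as well. The key computation is this. If the partial walk currently ends at $v_k\neq v_0$, having arrived along $e_k$ with $\varepsilon=\omega(v_k,e_k)$, then by Lemma \ref{L:walkflow} the residual satisfies $\partial g(v_k)=-\varepsilon$; writing $A$ and $B$ for the residual in- and out-flow sums at $v_k$, the identity $A-B=-\varepsilon$ forces the side needed for a coherent continuation to be positive (if $\varepsilon=+1$ then $B=A+1\ge 1$; if $\varepsilon=-1$ then $A=B+1\ge 1$), so the walk can always be extended while $v_k\neq v_0$. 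Hence it can only stop at $v_0$, and when it does, the closed-walk case of Lemma \ref{L:walkflow} combined with the stopping condition forces $\omega(v_0,e_1)+\omega(v_0,e_k)=0$, i.e.\ coherence at $v_0$. By Lemma \ref{L:walksign} a coherent closed walk has sign $(-1)^0=+1$, so every such walk is positive.

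Next I would assemble a single walk by splicing. Start a maximal coherent walk at any edge of $\supp f$ to obtain a closed positive coherent walk $T$, and set $g=f-f_T$. If $g=0$ we are done with $W=T$. Otherwise, using that $\Sigma(f)$ is connected, some vertex $u\in V(T)$ is incident to a residual edge: if no residual edge met $V(T)$, then every edge crossing the vertex boundary of $V(T)$ would carry no flow, forcing $V(T)$ to be a union of components of $\Sigma(f)$ and hence, by connectedness, all of it, whence $g=0$, a contradiction. At such a $u$ the residual flow is balanced, so residual ends of \emph{both} orientations occur at $u$; I can therefore choose a residual edge whose $u$-end orientation matches the orientation with which $T$ departs $u$ at one of its passages. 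Tracing a maximal coherent walk in $g$ from $u$ beginning with that edge yields, by the first step applied to $g$, a closed positive coherent walk $T'$, and the matched orientations make the walk obtained by inserting $T'$ into $T$ at that passage coherent at both new junctions. Replacing $T$ by this longer walk strictly increases $\sum_e f_T(e)$, so iterating terminates with a closed coherent walk $W$ traversing each $e$ exactly $f(e)$ times, giving $f_W=f$.

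The main obstacle I anticipate is precisely the coherence of the splicing: unlike the ordinary unsigned Hierholzer argument, where inserting a closed walk at a shared vertex automatically respects edge directions, here the inserted piece $T'$ must begin and end in the correct ``phase'' for the result to remain coherent with $\omega$. The crux that makes this work is the observation that the residual flow, being conservative, always presents residual ends of both orientations at any vertex it reaches, so a phase-matching first edge for $T'$ can always be found. Negative loops, which might appear to break this, are handled uniformly by the factor-$2$ convention for negative loops in the incidence matrix \eqref{incidence-matrix}, which keeps the boundary computations of Lemma \ref{L:walkflow} valid and hence does not disturb the balance argument.
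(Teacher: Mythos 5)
Your proposal is correct and takes essentially the same approach as the paper: both grow a coherent walk greedily, using the boundary formula of Lemma \ref{L:walkflow} together with conservativity of the (residual) flow to show the walk can always be extended away from its start and must close up coherently (hence positively, by Lemma \ref{L:walksign}), and both finish by splicing closed positive walks together at shared vertices. The only organizational difference is that the paper recurses by induction on $\|f\|$ over the components of the residual flow where you iterate a single Hierholzer-style splice, and your explicit phase-matching observation --- that conservation of the residual forces ends of both orientations at any vertex it meets, so an inserted closed walk can be started in the orientation that keeps the junctions coherent --- is a careful filling-in of the step the paper compresses into ``connecting them properly at some of their intersections.''
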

\begin{proof}
We apply induction on the total weight of $f$,
\[
\|f\| := \sum_{e\in E(\Sigma)} |f(e)|.
\]
To begin, choose a vertex $v_0$ and an edge $e_1$ incident with $v_0$ in $\Sigma(f)$.
Let $v_1$ be the other endpoint of $e_1$ ($v_1=v_0$ if $e_1$ is a loop).
This gives a walk $W_1=v_0e_1v_1$ of length $1$.  Clearly, $f\geq f_{W_1}\geq 0$.

Assume that we have constructed a partial walk
$W_k=v_0e_1v_1\cdots e_kv_k$ on $\Sigma(f)$ with $k\geq 1$, and that $f\geq f_{W_k}\geq 0$.
If $W_k$ is not closed and positive, then by Lemma~\ref{L:walkflow} the function $f_{W_k}$ is not a flow, for it
is not conservative at $v_k$; indeed,
$\partial f_{W_k}(v_k)=\omega(v_k,e_k)$ (or $2\omega(v_k,e_k)$ if $v_0=v_k$).
Since $f$ is conservative at $v_k$, $\partial(f-f_{W_k})=-\omega(v_k,e_k)$ or $-2\omega(v_k,e_k)$.
Since $f\geq0$, there exists an edge $e_{k+1}$ incident with $v_k$ in
$\Sigma(f-f_{W_k})$ such that
$\omega(v_k,x_{k+1})=-\omega(v_k,e_k)$.
Let $v_{k+1}$ denote the other endpoint of $e_{k+1}$ and extend $W_k$ to a walk
$W_{k+1}:=W_ke_{k+1}v_{k+1}$ on $\Sigma(f)$.
We have $f\geq f_{W_{k+1}}\geq 0$ by the construction.
Continuing this procedure as long as $W_{k+1}$ is not a closed, positive
walk, we finally obtain a directed closed, positive walk $(W_n,\omega)$.
Then $f':=f-f_{W_n}$ is a nonnegative integral flow and $\|f'\|<\|f\|$.

Let $\Sigma(f')$ have connected components
$\Sigma_1,\ldots,\Sigma_m$. Set $f'_i=f'|_{\Sigma_i}$,
$i=1,\ldots,m$. Note that $f'=\sum_{i=1}^m f'_i$ and $\supp
f'_i=E(\Sigma_i)\subseteq\Sigma(f)$, $1\leq i\leq m$. Each $f'_i$ is
a nonnegative, nonzero integral flow on $(\Sigma,\omega)$ and
satisfies $\|f'_i\|<\|f\|$. By induction there exists a directed
closed, positive walk $(W'_i,\omega)$ on $\Sigma_i$ such that
$f'_i=f_{W'_i}$. The union of $(W'_1,\omega),\ldots,(W'_m,\omega)$
and $(W_n,\omega)$ is connected. One can construct a single directed
closed, positive walk $(W,\omega)$ by rearranging the initial and
terminal vertices of $W'_1,\ldots,W'_m$ and $W_n$, and connecting
them properly at some of their intersections. Then $W$ is a walk on
$\Sigma(f)$ and $f_{W}=f$.
\end{proof}

Let $f$ be an integral flow on $(\Sigma,\omega)$. Associated
with $f$ is an orientation $\omega_f$ on $\Sigma$ defined by
\begin{equation}\label{orientation-f}
\omega_f(v,e)=\begin{cases}
\phantom{-}\omega(v,e) &    \text{if } f(e)\geq 0,\\
-\omega(v,e) &  \text{if } f(e)<0,
\end{cases}
\end{equation}
for each edge $e$ and endpoint $v \in \End(e)$.
The {\em absolute function} $|f|$ is defined by
$$|f|(e)=|f(e)| \quad \text{for } e\in E(\Sigma).$$
Since $|f|=[\omega,\omega_f]f$, $|f|$ is a
nonnegative, nonzero integral flow on $(\Sigma,\omega_f)$.

\begin{cor}\label{flow-walk}
Let $f$ be a nonzero integral flow on $(\Sigma,\omega)$.
\begin{enumerate}[\rm (a)]
\item
If $\Sigma(f)$ is connected, then there exists a directed
closed, positive walk $(W,\omega_f)$ on $\Sigma(f)$ such that
$f=f_{(W,\,\omega_f)}$.
\item
There exists a directed closed, positive walk $(W,\omega_W)$
such that $f=f_{(W,\,\omega_W)}$.
\end{enumerate}
\end{cor}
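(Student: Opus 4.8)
The plan is to reduce both parts to the nonnegative case already settled in Proposition~\ref{P:flowwalk} by passing to the absolute flow $|f|$ together with its associated orientation $\omega_f$. For part (a), I would first note that $|f|=[\omega,\omega_f]f$ is a nonnegative, nonzero integral flow on $(\Sigma,\omega_f)$ and that $\Sigma(|f|)=\Sigma(f)$ is connected by hypothesis. Applying Proposition~\ref{P:flowwalk} to $(\Sigma,\omega_f)$ and the flow $|f|$ yields a directed closed, positive walk $(W,\omega_f)$ on $\Sigma(f)$ whose walk direction coincides with the ambient orientation $\omega_f$, so that $|f|=f_W$; that is, $|f|(e)$ is the number of times $W$ traverses $e$. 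The one remaining step is to recompute the flow of this same walk against the original orientation $\omega$. Using \eqref{Flow-Walk-Defn} together with the fact (immediate from \eqref{orientation-f}) that $[\omega,\omega_f](e)$ equals $+1$ when $f(e)>0$ and $-1$ when $f(e)<0$, I obtain
\[
f_{(W,\,\omega_f)}(e)=\sum_{e_i\in W,\; e_i=e}[\omega,\omega_f](e_i)
=[\omega,\omega_f](e)\,|f|(e)=f(e),
\]
which is precisely the desired identity $f=f_{(W,\,\omega_f)}$.

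For part (b), where $\Sigma(f)$ need not be connected, I would decompose $\Sigma(f)$ into its connected components $\Sigma_1,\ldots,\Sigma_m$ and set $f_i=f|_{\Sigma_i}$. Each $f_i$ is a nonzero integral flow with $\Sigma(f_i)=\Sigma_i$ connected, so part~(a) supplies a directed closed, positive walk $(W_i,\omega_f)$ on $\Sigma_i$ with $f_i=f_{(W_i,\,\omega_f)}$. It then remains to amalgamate these pairwise vertex-disjoint closed walks into a single directed closed, positive walk. I would do so by joining consecutive walks through a connecting path $P_i$ of $\Sigma$ running from a vertex of $\Sigma_i$ to a vertex of $\Sigma_{i+1}$: traverse $P_i$ to reach $W_{i+1}$, go once around $W_{i+1}$, then retrace $P_i$ in reverse. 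Since each connecting path is traversed once in each direction, the two passes over any of its edges carry opposite walk directions and hence cancel in \eqref{Flow-Walk-Defn}, so the assembled walk still realizes $f=\sum_i f_i$; and since each closed walk $W_i$ admits two opposite directions, I can choose them to splice coherently at the attachment vertices. With coherence maintained everywhere, the resulting closed walk is positive by Lemma~\ref{L:walksign} (zero incoherences give sign $(-1)^0=+1$), giving the required $(W,\omega_W)$ with a general walk direction.

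The main obstacle is the amalgamation in part~(b): one must check that all the splices at the attachment vertices can be made coherent simultaneously, using the freedom to reverse each $W_i$ and each connecting excursion, and that the back-and-forth over each $P_i$ neither disturbs the flow values nor forces an immediate backtrack on a single edge (which would violate coherence at a turn-around vertex). This last point is handled by inserting the walk $W_{i+1}$ between the forward and reverse passes of $P_i$, so no edge is ever retraced at consecutive steps. I would also record that the construction presumes the components $\Sigma_i$ lie in a common connected component of $\Sigma$, so that the connecting paths $P_i$ exist; this holds in particular whenever $\Sigma$ is connected.
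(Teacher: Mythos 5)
Your proposal is correct and takes essentially the same route as the paper's own proof: part (a) passes to the nonnegative flow $|f|=[\omega,\omega_f]f$ on $(\Sigma,\omega_f)$, invokes Proposition~\ref{P:flowwalk}, and converts back through the coupling $[\omega,\omega_f]$, while part (b) splices the component walks via connecting paths traversed once in each direction so that their contributions to \eqref{Flow-Walk-Defn} cancel, exactly as in the paper's concatenation $W_1P_2W_2P_2^{-1}\cdots P_mW_mP_m^{-1}$ (your chain of paths between consecutive components versus the paper's star of paths from $v_1$ is an immaterial variation). Your explicit attention to coherence at the splice vertices and to the tacit assumption that the components of $\Sigma(f)$ lie in a common component of $\Sigma$ are careful refinements of points the paper leaves implicit, not a different argument.
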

\begin{proof}
(a) Note that $|f|$ is a nonnegative, nonzero integral flow on
$(\Sigma,\omega_f)$ and $\Sigma(|f|)=\Sigma(f)$. According to
Proposition~\ref{P:flowwalk}, there exists a directed closed,
positive walk $(W,\omega_f)$ on $\Sigma(|f|)$ such that
$f_{W}=|f|$ within the oriented signed graph $(\Sigma,\omega_f)$, where
\begin{equation*}
f_{W}(e) = \sum_{e_i\in W,\,e_i=e}[\omega_f,\omega_f](e_i) = \sum_{e_i\in W,\,e_i=e}1
\end{equation*}
for each $e\in E(\Sigma).$
For the same directed closed, positive walk $(W,\omega_f)$ within $(\Sigma,\omega)$, we have
\begin{align*}
f_{(W,\,\omega_f)}(e)
&= \sum_{e_i\in W,\,e_i=e}[\omega,\omega_f](e_i)
= [\omega,\omega_f](e) \sum_{e_i\in W,\,e_i=e}1 \\
&=  [\omega,\omega_f](e)f_{W}(e)
\end{align*}
for each $e \in E(\Sigma)$.
Since $f=[\omega,\omega_f]\, |f|$ and $|f|=f_{W}$, it
follows that $f_{(W,\,\omega_f)}=f$.

(b) Let $\Sigma(f)$ have components $\Sigma_i$, $1\leq i\leq m$,
where $m>1$. Since each $\Sigma(f_i)$ is connected, there are
directed closed, positive walks $(W_i,\,\omega_{f_i})$ on $\Sigma_i$
such that $f_i=f_{(W_i,\,\omega_{f_i})}$. Let $v_i$ be the initial
and terminal vertex of $W_i$. Let $(P_i,\omega_{P_i})$ be a directed
path from $v_1$ to $v_i$, where $i=2,\ldots,m$. Then
\[
(W,\omega_W):=(W_1P_2W_2P_2^{-1}\cdots P_mW_mP_m^{-1},
\omega_{W_1}\omega_{P_2}\omega_{W_2}\omega_{P_2}^{-1}\cdots
\omega_{P_m}\omega_{W_m}\omega_{P_m}^{-1})
\]
is a directed closed, positive walk such that $f=f_{(W,\,\omega_W)}$.
\end{proof}

\subsection{Lifted Flows}\

Consider a function $\tilde{f}: E(\widetilde\Sigma)\to \mathbb Z$
defined on the edge set of the double covering graph
$\widetilde\Sigma$.  The \emph{projection} of $\tilde f$ is the
function $\pi(\tf): E(\Sigma) \to \mathbb Z$ defined by
\[
\pi(\tf)(e) = \tf(\tilde e) + \tf(\tilde e^*), \quad e\in E(\Sigma).
\]
Let $\widetilde{\bm M}={\bm M}(\widetilde\Sigma,\widetilde\omega)=[{\bm m}(v^\alpha,\tilde e)]$ denote
the $V(\widetilde\Sigma)\times E(\widetilde\Sigma)$ incidence matrix of $(\widetilde\Sigma,\widetilde\omega)$;
it is defined as in \eqref{incidence-matrix} but now every edge is positive.
Since only positive loops of $\Sigma$ are lifted to loops in $\widetilde\Sigma$, we have
\[
\label{Lift-Matrix-Entry}
{\bm m}(v^\alpha,\tilde e) =
\sum_{v^\alpha\in\End(\tilde{e})} \tilde\omega(v^\alpha,\tilde e) =
\begin{cases}
0 & \text{if $e$ is a positive loop,} \\
\alpha\,\omega(v,e) & \text{otherwise,}
\end{cases}
\]
where $e=\pi(\tilde e)$, as usual (and the sum is over two edge ends if $\tilde e$ is a loop at $v^\alpha$).
Also as usual, the boundary operator
$\partial:{\mathbb Z}^{E(\widetilde\Sigma)}\to{\mathbb Z}^{V(\widetilde\Sigma)}$
is defined by
\[
\partial\tilde f(v^\alpha)=\sum_{\tilde e\in E(\widetilde\Sigma)}{\bm m}(v^\alpha,\tilde e)\tilde f(\tilde e)
=\sum_{(v^\alpha,\tilde e)\in\End(v^\alpha)} \tilde\omega(v^\alpha,\tilde e) f(\tilde e).
\]

\begin{lem}\label{L:Lift-Boundary-Operator}
Let $\tilde f$ be a function defined on $E(\widetilde\Sigma)$. Then
$\partial\pi(\tilde f)$ is given by
\begin{equation}\label{Lift-Bounary-Operator}
\partial\pi(\tilde f)(v)=\partial\tilde f(v^+)-\partial\tilde f(v^-), \quad v\in V.
\end{equation}
If $\tilde f$ is a flow on $(\widetilde\Sigma,\widetilde\omega)$, so is
$\pi(\tilde f)$ on $(\Sigma,\omega)$.
\end{lem}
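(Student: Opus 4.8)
\emph{Proof proposal.}
The plan is to reduce the whole statement to a single edge-by-edge identity between incidence-matrix entries, and then read off both assertions. The second assertion is immediate once \eqref{Lift-Bounary-Operator} is established: if $\tf$ is a flow on $(\widetilde\Sigma,\widetilde\omega)$, then $\partial\tf(v^+)=\partial\tf(v^-)=0$ for every $v\in V$, so \eqref{Lift-Bounary-Operator} gives $\partial\pi(\tf)(v)=0$ for all $v$, i.e.\ $\pi(\tf)$ is a flow on $(\Sigma,\omega)$. Hence the entire content is the identity \eqref{Lift-Bounary-Operator} itself.

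First I would expand both sides in terms of incidence-matrix entries. Using the definition of $\pi(\tf)$ together with the boundary operator on $\Sigma$,
\[
\partial\pi(\tf)(v)=\sum_{e\in E}{\bm m}(v,e)\big[\tf(\tilde e)+\tf(\tilde e^*)\big],
\]
while on the covering graph
\[
\partial\tf(v^+)-\partial\tf(v^-)=\sum_{\tilde e'\in E(\widetilde\Sigma)}\big[{\bm m}(v^+,\tilde e')-{\bm m}(v^-,\tilde e')\big]\tf(\tilde e').
\]
Since the only edges of $\widetilde\Sigma$ incident to $v^+$ or $v^-$ are the two lifts $\tilde e,\tilde e^*$ of edges $e$ incident to $v$ (a grouping justified by the incidence-preserving bijection of Lemma~\ref{L:closedwalklift}(a)), it suffices to prove the per-lift identity
\[
{\bm m}(v^+,\tilde e')-{\bm m}(v^-,\tilde e')={\bm m}(v,e)\qquad\text{for each }\tilde e'\in\{\tilde e,\tilde e^*\}.
\]

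The key step is to verify this per-lift identity, which rests entirely on the lift-orientation rule $\widetilde\omega(v^\alpha,e^\beta)=\alpha\,\omega(v,e)$ of \eqref{Lift-Orientation}: the orientation value at a lifted end equals its level $\alpha$ times the downstairs value. I would check the three cases recorded in the incidence-matrix formula for $\widetilde\Sigma$. For a link (and, more generally, the $v$-side of any non-loop), exactly one of $v^+,v^-$ is a $v$-side endpoint of $\tilde e'$, and the nonzero entry is $+\omega(v,e)$ if that level is $+$ and $-\omega(v,e)$ if it is $-$; in either case the signed difference collapses to $\omega(v,e)={\bm m}(v,e)$, independently of which level is hit. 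For a negative loop, $\tilde e'$ is a link joining $v^+$ and $v^-$, contributing $+\omega(v,e)$ at $v^+$ and $-\omega(v,e)$ at $v^-$, so the difference is $2\,\omega(v,e)={\bm m}(v,e)$. For a positive loop, $\tilde e'$ is a loop at a single level with incidence entry $0$, matching ${\bm m}(v,e)=0$ downstairs. Summing the per-lift identity over all lifts and edges then collapses the right-hand side to $\sum_{e\in E}{\bm m}(v,e)\big[\tf(\tilde e)+\tf(\tilde e^*)\big]=\partial\pi(\tf)(v)$, which is \eqref{Lift-Bounary-Operator}.

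The hard part will be purely the bookkeeping at loops: a loop contributes two ends at the same vertex, and positive and negative loops lift in opposite ways (positive loops to loops, negative loops to crossing links between the two levels), so the three cases must be separated and the conventions $\widetilde\omega(v^\alpha,e^\beta)=\alpha\,\omega(v,e)$ and $\widetilde\omega(v^{-\alpha},e^{-\beta})=-\alpha\,\omega(v,e)$ tracked carefully through each. Everything else is a direct substitution.
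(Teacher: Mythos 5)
Your proposal is correct and follows essentially the same route as the paper: both reduce the lemma to the identity \eqref{Lift-Bounary-Operator}, verify it by directly expanding both boundary operators using the lift-orientation rule \eqref{Lift-Orientation}, and then obtain the flow assertion immediately. The only difference is bookkeeping: the paper sums over edge ends and invokes the bijection $\End(v^\alpha)\to\End(v)$ of Lemma~\ref{L:closedwalklift}(a), which makes the loop cases automatic, whereas you prove the entrywise identity ${\bm m}(v^+,\tilde e')-{\bm m}(v^-,\tilde e')={\bm m}(v,e)$ via the three-case analysis (link, positive loop, negative loop) that the paper instead records in its displayed formula for ${\bm m}(v^\alpha,\tilde e)$ just before the lemma.
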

\begin{proof}
Fix a vertex $v$ in $V(\Sigma)$. Note that $\pi$ acts as a bijection
between $\End(v^\alpha)$ in $\tilde\Sigma$ and $\End(v)$ in
$\Sigma$, for $\alpha\in\{+1,-1\}$, and recall that
$\tilde\omega(v^\alpha,\tilde{e})=\alpha\omega(v,e)$. Then
\begin{align*}
\partial\pi(\tilde f)(v)&=\sum_{(v,e)\in\End(v)} \omega(v,e) \pi(\tilde f)(e) \\
&=\sum_{(v,e)\in\End(v)} \omega(v,e) [\tilde f(\tilde e)+\tilde f(\tilde e^*)] \\
&=\sum_{(v,e)\in\End(v)} \omega(v,e) \tilde f(\tilde e) + \sum_{(v,e)\in\End(v)} \omega(v,e) \tilde f(\tilde e^*) \\
&=\sum_{(v,e)\in\End(v)} \tilde\omega(v^+,\tilde e) \tilde f(\tilde e) - \sum_{(v,e)\in\End(v)} \tilde\omega(v^-,\tilde e^*) \tilde f(\tilde e^*) \\
&=\sum_{(v^+,\tilde e)\in\End(v^+)} \tilde\omega(v^+,\tilde e) \tilde f(\tilde e) - \sum_{(v^-,\tilde e^*)\in\End(v^-)} \tilde\omega(v^-,\tilde e^*) \tilde f(\tilde e^*) \\
&=\partial\tilde f(v^+) - \partial\tilde f(v^-).
\end{align*}

When $\tilde f$ is a flow on $(\widetilde\Sigma,\widetilde\omega)$,
then $\partial\tilde f(v^+)=\partial\tilde f(v^-)=0$. Thus
$\partial\pi(\tilde f)(v)=0$ by \eqref{Lift-Bounary-Operator}, so $\partial\pi(\tilde f)$ is a flow on $(\Sigma,\omega)$.
\end{proof}

A \emph{lift} of an integral flow $f$ of $(\Sigma,\omega)$ to
$\widetilde\Sigma$ is an integral flow $\tf$ of $(\widetilde\Sigma,\widetilde\omega)$
such that $\pi(\tilde f) = f$.

\begin{prop}\label{P:flowlift}
\begin{enumerate}[\rm (a)]
\item
Let $(\tW,\widetilde{\omega}_W)$ be a lift of a
directed walk $(W,\omega_W)$. If $W$ is closed and has positive
sign, then $(\tW,\widetilde{\omega}_W)$ is a directed closed walk, and
\begin{equation}\label{Lift-Walk-Flow}
\pi\big(f_{(\tW,\,\widetilde{\omega}_W)}\big)=f_{(W,\,\omega_W)}.
\end{equation}
\item
Let $f$ be an integral flow on $(\Sigma,\omega)$ such that
$f=f_{(W,\,\omega_f)}$, where $(W,\omega_f)$ is a directed closed
positive walk. If $(W,\omega_f)$ is lifted to a directed closed walk
$(\tW,\widetilde{\omega}_f)$ in $\widetilde\Sigma$, then
$f_{(\tW,\,\widetilde{\omega}_f)}$ is a flow on
$(\widetilde\Sigma,\widetilde\omega)$ lifted from $f$. Moreover, if
$f\geq 0$, then $f_{(\tW,\,\widetilde{\omega}_f)}\geq 0$.
\end{enumerate}
\end{prop}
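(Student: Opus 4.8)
For part (a), the plan is to separate the topological claim from the flow identity. That $(\tW,\widetilde{\omega}_W)$ is a directed closed walk is immediate: since $W$ is closed and positive, Lemma~\ref{L:closedwalklift}(c) forces the lift $\tW$ to be closed, and the construction preceding \eqref{Lift-of-Walk} already records that $\widetilde\omega_W$ is a direction of $\tW$. To prove the identity \eqref{Lift-Walk-Flow}, I would evaluate both sides at a fixed edge $e\in E(\Sigma)$. By the definition of the projection of a function, $\pi(f_{(\tW,\,\widetilde\omega_W)})(e)=f_{(\tW,\,\widetilde\omega_W)}(\tilde e)+f_{(\tW,\,\widetilde\omega_W)}(\tilde e^*)$, and expanding each summand through \eqref{Flow-Walk-Defn} expresses this as a sum of couplings $[\widetilde\omega,\widetilde\omega_W](\tilde e_i)$ over those edges $\tilde e_i$ of $\tW$ that are copies of $e$.

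The crux of (a) is the index bookkeeping. By construction \eqref{Lift-of-Walk}, each edge $\tilde e_i$ of $\tW$ is the lift of the $i$-th edge $e_i$ of $W$, so $\tilde e_i\in\{\tilde e,\tilde e^*\}$ precisely when $e_i=e$. This identifies the two sums (over copies $\tilde e$ and $\tilde e^*$) with the single sum over occurrences of $e$ in $W$. I would then invoke the coupling-preservation identity \eqref{Lift-Coupling-Preserving}, applied with the ambient orientation $\omega$ and the walk direction $\omega_W$ (whose lifts are $\widetilde\omega$ and $\widetilde\omega_W$), to replace each $[\widetilde\omega,\widetilde\omega_W](\tilde e_i)$ by $[\omega,\omega_W](e_i)$; summing over indices $i$ with $e_i=e$ recovers $f_{(W,\,\omega_W)}(e)$ exactly, again by \eqref{Flow-Walk-Defn}. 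The only point requiring care is that \eqref{Lift-Coupling-Preserving} was stated for lifted orientations of subgraphs, so I must note that it applies verbatim to the pair $(\omega,\omega_W)$.

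For part (b), I would take the walk direction to be $\omega_W=\omega_f$ and apply part (a). Since $(W,\omega_f)$ is closed and positive, part (a) yields that $(\tW,\widetilde\omega_f)$ is a directed closed walk and that $\pi(f_{(\tW,\,\widetilde\omega_f)})=f_{(W,\,\omega_f)}=f$. Because $\tW$ is a directed closed walk in the unsigned graph $\widetilde\Sigma$, where every walk has positive sign, $f_{(\tW,\,\widetilde\omega_f)}$ is a genuine flow on $(\widetilde\Sigma,\widetilde\omega)$; this is the flow–walk correspondence recorded after \eqref{Flow-Walk-Defn}, equivalently Lemma~\ref{L:walkflow} with $\sigma(\tW)=+1$. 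Together with the projection identity this shows $f_{(\tW,\,\widetilde\omega_f)}$ is a lift of $f$.

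Finally, the nonnegativity claim rests on the sign convention in \eqref{orientation-f}: $\omega_f$ is chosen so that $[\omega,\omega_f](e)=+1$ exactly when $f(e)\geq0$. Hence if $f\geq0$ then $[\omega,\omega_f](e)=+1$ on all of $\Sigma(f)$, and coupling preservation gives $[\widetilde\omega,\widetilde\omega_f](\tilde e_i)=+1$ for every edge $\tilde e_i$ of $\tW$. Every term in \eqref{Flow-Walk-Defn} for $f_{(\tW,\,\widetilde\omega_f)}(\tilde e)$ is therefore $+1$, so the value simply counts how many times $\tW$ traverses $\tilde e$ and is $\geq0$. I expect no genuine obstacle: the whole proposition is routine once \eqref{Lift-Coupling-Preserving} is available, and the only subtleties are the edge-index correspondence in (a) and verifying that the sign convention of \eqref{orientation-f} transfers intact to the cover.
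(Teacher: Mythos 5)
Your proposal is correct and follows essentially the same route as the paper: closedness of the lift via Lemma~\ref{L:closedwalklift}(c), the edge-by-edge identity \eqref{Lift-Walk-Flow} via the occurrence-indexed expansion of \eqref{Flow-Walk-Defn} and coupling preservation \eqref{Lift-Coupling-Preserving}, and part (b) by specializing $\omega_W=\omega_f$. The only (harmless) divergence is that you derive the flow property of $f_{(\tW,\,\widetilde{\omega}_f)}$ directly from Lemma~\ref{L:walkflow} with $\sigma(\tW)=+1$ in the all-positive cover, where the paper cites \eqref{Lift-Walk-Flow} together with Lemma~\ref{L:Lift-Boundary-Operator}; your route is, if anything, slightly more direct.
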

\begin{proof}
(a) It follows from Lemma~\ref{L:closedwalklift} that $(\tW,\widetilde{\omega}_W)$ is a directed closed walk.
To see that $\pi(f_{(\tW,\,\widetilde{\omega}_W)})=f_{(W,\,\omega_W)}$, it suffices to show that
$f_{(W,\,\omega_W)}(e)=f_{(\tW,\,\widetilde{\omega}_W)}(e^+)+f_{(\tW,\,\widetilde{\omega}_W)}(e^-)$.
Since lifting of orientations preserves the coupling, by \eqref{Lift-Coupling-Preserving} we have
\begin{eqnarray*}
\pi\big(f_{(\tW,\,\widetilde{\omega}_W)}\big)(e)
&=& f_{(\tW,\,\widetilde{\omega}_W)}(e^+)+f_{(\tW,\,\widetilde{\omega}_W)}(e^-) \\
&=& \sum_{\tilde{e}_i\in\tW,\; \tilde{e}_i\in\{e^+,e^-\}} [\widetilde\omega,\widetilde{\omega}_W](\tilde{e}_i) \\
&=& \sum_{e_i\in W,\; e_i=e} [\omega,\omega_W](e_i) \\
&=& f_{(W,\,\omega_W)}(e).
\end{eqnarray*}

(b) Since $\omega_f$ is an orientation on $\Sigma$,
$\widetilde{\omega}_f$ is an orientation on $\widetilde\Sigma$. Viewing $\omega_f$ as a
direction $\omega_W$ on $W$, $\widetilde{\omega}_W$ is a
direction of $\tW$. Note that the lift of $\omega_W$ is
the same as the lift of $\omega_f$, so we can view
$\widetilde{\omega}_f$ as the direction $\widetilde{\omega}_W$ of $\tW$.

Since $f=f_{(W,\,\omega_f)}$, it follows from
\eqref{Lift-Walk-Flow} and Lemma \ref{L:Lift-Boundary-Operator} that $f_{(\tW,\,\widetilde{\omega}_f)}$ is a flow on
$(\widetilde\Sigma,\widetilde\omega)$ and a lift of $f$.
If $f\geq0$, then $\omega_f=\omega$; consequently,
$\widetilde{\omega}_f=\widetilde\omega$.
Thus $f_{(\tW,\,\widetilde{\omega}_f)}\geq 0$ by the definition in~\eqref{Flow-Walk-Defn}.
\end{proof}

\subsection{Decomposability}\

An integral flow $f$ is \emph{conformally decomposable}
if it is nonzero and can be represented as a sum of two other
integral flows, $f = f_1 + f_2$, each of which is nonzero and
conforms to the sign pattern of $f$, that is, $f_1(e)f_2(e)\geq 0$
for all edges $e$; this means that $f_1(e)$ and $f_2(e)$ have
the same sign when they are nonzero. An integral flow is said
to be {\em conformally indecomposable} if it is nonzero and
not conformally decomposable.
It is well known and easy to see that conformally indecomposable flows on an unsigned graph are just graphic circuit flows.

A nonnegative, nonzero integral flow $f$ is
{\em minimal} provided that if $g$ is a nonnegative, nonzero
integral flow on $(\Sigma,\omega)$ such that $g(e)\leq f(e)$
for all edges $e$, then $g=f$.

If an integral flow $f$ is nonnegative, then its minimality is
equivalent to its conformal indecomposability. In fact, if $f$ is
conformally decomposed into $f=f_1+f_2$, then $f_1$ and $f_2$ must
be nonnegative, nonzero integral flows such that $f_1\leq f$ and
$f_1\neq f$; this means that $f$ is not minimal. Conversely, if $f$
is not minimal, then there is a nonnegative, nonzero integral flow
$g$ on $(\Sigma,\omega)$ such that $g\leq f$ but $g\neq f$. Now
$f-g$ is nonzero and nonnegative, and $f$ decomposes conformally
into $g$ and $f-g$.

The following proposition shows that conformal indecomposability
of a nonzero integral flow $f$ on $(\Sigma,\omega)$ is equivalent
to the minimality of the flow $|f|$ on $(\Sigma,\omega_f)$,
where $|f|$ is the absolute value function of $f$ and $\omega_f$ is
the orientation given by \eqref{orientation-f}.

\begin{prop}\label{L:Indecomposability-Minimality}
Let $f$ be a nonzero integral flow on $(\Sigma,\omega)$. Then the
following properties are equivalent.
\begin{enumerate}[\rm (a)]
\item $f$ is a conformally indecomposable flow on $(\Sigma,\omega)$.

\item $|f|$ is a conformally indecomposable flow on
$(\Sigma,\omega_f)$.

\item $|f|$ is a minimal flow on $(\Sigma,\omega_f)$.
\end{enumerate}
\end{prop}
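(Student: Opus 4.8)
The plan is to prove the trivial equivalence (b) $\Leftrightarrow$ (c) first, and then to obtain (a) $\Leftrightarrow$ (b) by transporting conformal decompositions across the reorientation that carries $\omega$ to $\omega_f$. For (b) $\Leftrightarrow$ (c), I would simply observe that $|f|=[\omega,\omega_f]f$ is a nonnegative, nonzero integral flow on $(\Sigma,\omega_f)$, so that this equivalence is nothing but the equivalence of conformal indecomposability and minimality for nonnegative flows, already established in the paragraph immediately preceding the statement. No new argument is needed there.

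The substance is (a) $\Leftrightarrow$ (b). Let $S=\{e:f(e)<0\}$, so that $\omega_f=\omega_S$ and the coupling satisfies $[\omega,\omega_f](e)=-1$ for $e\in S$ and $+1$ otherwise; in particular $|f|(e)=[\omega,\omega_f](e)\,f(e)$. I would introduce the reorientation map $\Phi(g)=[\omega,\omega_f]\,g$ (entrywise product), which by the remark that flows transform by the coupling is a linear bijection from integral flows on $(\Sigma,\omega)$ to integral flows on $(\Sigma,\omega_f)$; it satisfies $\Phi^2=\mathrm{id}$ because $[\omega,\omega_f](e)^2=1$, and $\Phi(f)=|f|$. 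The key claim is that a flow $g$ conforms to the sign pattern of $f$ on $(\Sigma,\omega)$ if and only if $\Phi(g)\geq 0$ on $(\Sigma,\omega_f)$, together with $\supp\Phi(g)=\supp g$. Granting this, a conformal decomposition $f=f_1+f_2$ maps under $\Phi$ to $|f|=\Phi(f_1)+\Phi(f_2)$ with both summands nonnegative and nonzero, which is automatically a conformal decomposition of $|f|\geq 0$; conversely a conformal decomposition $|f|=g_1+g_2$ with $g_1,g_2\geq 0$ nonzero maps under $\Phi$ to $f=\Phi(g_1)+\Phi(g_2)$ with both summands conforming to $f$. Hence $f$ is conformally decomposable exactly when $|f|$ is, which is precisely (a) $\Leftrightarrow$ (b).

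The one place requiring care, and the main obstacle, is the verification of the key claim, specifically the sign-and-support bookkeeping on edges where a summand vanishes. For the forward direction: if $g(e)=0$ then $\Phi(g)(e)=0$, so $\supp\Phi(g)\subseteq\supp g$, with equality from $\Phi^2=\mathrm{id}$; and for $e\in\supp g$, conformity forces $g(e)$ to share the sign of $f(e)$, so $g(e)<0$ gives $e\in S$ and $\Phi(g)(e)=-g(e)>0$, while $g(e)>0$ gives $e\notin S$ and $\Phi(g)(e)=g(e)>0$, whence $\Phi(g)\geq 0$. For the converse direction I would first note that in a decomposition $|f|=g_1+g_2$ with $g_i\geq 0$, any edge with $f(e)=0$ has $|f|(e)=0$ and therefore $g_1(e)=g_2(e)=0$; thus each $\Phi(g_i)$ is supported within $\supp f$ and, by the same case analysis, carries the sign of $f$ there. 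This last point is exactly what guarantees that $\Phi(g_i)$ genuinely conforms to the sign pattern of $f$, rather than merely having support contained in $\supp f$, and it completes the equivalence.
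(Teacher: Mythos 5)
Your proof is correct and takes essentially the same route as the paper's: the paper also settles (b)$\Leftrightarrow$(c) by the remark preceding the statement and proves the remaining equivalence by multiplying decompositions through by the coupling $[\omega,\omega_f]$ with exactly your sign case analysis (setting $f_i=[\omega,\omega_f]\,g_i$ and $g_i=[\omega,\omega_f]\,f_i$ in the two directions), so your involution $\Phi$ is just a convenient packaging of that argument. One small caution: your ``key claim'' as literally stated is too strong, since $\Phi(g)\geq 0$ together with $\supp\Phi(g)=\supp g$ does not by itself force $\supp g\subseteq\supp f$ (consider $g$ positive on an edge where $f$ vanishes, where the coupling is $+1$), but your final paragraph supplies precisely the missing support argument ($g_1,g_2\geq 0$ summing to $|f|$ must vanish off $\supp f$) in the only place the converse direction is used, so the proof as a whole is complete.
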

\begin{proof}
Applying the boundary operator \eqref{Boundary-Operator}, it is
clear that $f$ is a flow on $(\Sigma,\omega)$ if and only if
$|f|=[\omega,\omega_f]\, f$ is a flow on $(\Sigma,\omega_f)$. Since
$|f|$ is nonnegative, its minimality is equivalent to its conformal
indecomposability, so we already have $(b)\Leftrightarrow(c)$ by the
argument above.

(a) $\Rightarrow$ (c): Suppose $|f|$ is not minimal, that is,
$|f|=g_1+g_2$, where $g_1$ and $g_2$ are nonnegative, nonzero
integral flows on $(\Sigma,\omega_f)$. Setting
$f_i=[\omega,\omega_f]\, g_i$ yields nonzero integral flows on
$(\Sigma,\omega)$, $i=1,2$. Thus
\[
f=[\omega,\omega_f]\, |f|
=[\omega,\omega_f]\, g_1
+[\omega,\omega_f]\, g_2 =f_1+f_2,
\]
and $f_1\, f_2=g_1\, g_2\geq 0$, meaning that $f$ is conformally
decomposable. This is a contradiction.

(b) $\Rightarrow$ (a): Suppose $f$ is conformally decomposable, that
is, $f=f_1+f_2$, where $f_1$ and $f_2$ are nonzero integral flows
on $(\Sigma,\omega)$ such that $f_1\, f_2\geq 0$. Setting
$g_i=[\omega,\omega_f]\, f_i$ yields nonzero integral flows on
$(\Sigma,\omega_f)$, $i=1,2$. For each edge $e$, if $f_i(e)>0$, we
must have $f(e)>0$ and $[\omega,\omega_f](e)=1$ by the definition of
$\omega_f$; if $f_i(e)<0$, we must have $f(e)<0$ and
$[\omega,\omega_f](e)=-1$; thus $g_i(e)\geq 0$. Hence
\[
|f|=[\omega,\omega_f]\, f
=[\omega,\omega_f]\, f_1+[\omega,\omega_f]\, f_2 =g_1+g_2,
\]
meaning that $|f|$ is conformally decomposable. This is a contradiction.
\end{proof}

\section{Indecomposable Flows}

A signed graph with nonempty edge set is called {\em sesqui-Eulerian} if there
exists a directed closed, positive walk that uses every edge at least once
but at most twice, and whose direction has the same
orientation on each pair of repeated edges. A sesqui-Eulerian signed graph
is {\em prime} if no such directed closed, positive walk
properly contains any directed closed, positive subwalks.
A sesqui-Eulerian signed graph is {\em minimal} if it does not
properly contain any sesqui-Eulerian signed subgraphs.
It is clear that minimal sesqui-Eulerian signed graphs must be prime.

\begin{defn}\label{cycle-tree-defn}
A signed graph $T$ with nonempty edge set is called a {\em circle-tree} if it satisfies the following conditions:
\begin{enumerate}[\hspace{2ex} \rm (a)]
\item
$T$ is connected.
\item
Each block of $T$ is either a circle or an edge.
\item
Each end block of $T$ is a circle.
\item
Each cut-vertex is incident with exactly two blocks.
\end{enumerate}
The blocks that are circles are called the \emph{circle blocks} of $T$.
The paths (of possibly zero length) between pairs of circle blocks are called the {\em block paths} of $T$.
The \emph{length} of $T$ is
\[
\ell(T):=\sum_i \ell(C_i)+2\sum_j\ell(P_j),
\]
where the $C_i$ are the circle blocks and the $P_j$ are the block paths.

A circle-tree is said to be {\em sesqui-Eulerian} if it further satisfies
\begin{enumerate}[\hspace{2ex} \rm (e)]
\item
{\em Parity Condition}: The sign of a circle block equals $(-1)^p$, where $p$ is the number of cut-vertices of $T$ on the circle.
\end{enumerate}
\end{defn}

We shall see that prime sesqui-Eulerian signed graphs are
sesqui-Eulerian circle-trees (Proposition \ref{prime}) and that minimal sesqui-Eulerian signed
graphs are signed-graph circuits, i.e., circuits of Types I, II, and III (Corollary \ref{minimal}).

A circle-tree can be viewed as a tree-like graph whose ``vertices''
are the circle blocks and whose ``edges'' are the block paths. The
endpoints of block paths are cut-vertices. A block path of length zero
is a common cut-vertex of two circle blocks. If each circle block is
contracted to a point, the resulting graph is a tree.

We may also think of a sesqui-Eulerian circle-tree as a ``tree'' whose
``vertices'' are its vertex-disjoint maximal Eulerian subgraphs and whose ``edges'' are the paths (of
positive length) between the maximal Eulerian subgraphs, where each
such maximal Eulerian subgraph is also a tree-like structure whose
``vertices'' are edge-disjoint circles and ``edges'' are the
intersection vertices between pairs of circles.

Let $T$ be a sesqui-Eulerian circle-tree. A {\em direction} of $T$ is an
orientation $\omega_T$ on the signed graph $T$ such that
$(T,\omega_T)$ has neither a sink nor a source, and
for each circle block $C$ the subgraph $(C,\omega_T)$ has either a sink or a
source at each cut-vertex of $T$ on $C$. It is easy to see that
there exist exactly two (opposite) directions on $T$.
Figure~\ref{Prime-Sesqui-Eulerian-Signed-Graph-Exmp} exhibits a sesqui-Eulerian circle-tree with a direction.
\begin{figure}[h]
\includegraphics[width=100mm]{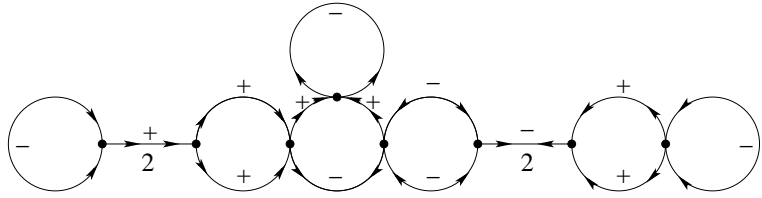}
\caption{A sesqui-Eulerian circle-tree with a direction.}
\label{Prime-Sesqui-Eulerian-Signed-Graph-Exmp}
\end{figure}

For unsigned graphs, since all edges are positive, sesqui-Eulerian circle-trees are just
circles, and directed sesqui-Eulerian circle-trees are directed circles.

When a circle-tree $T$ is contained in a signed graph $\Sigma$, the {\em indicator function} of $T$ is the function $I_T: E(\Sigma)\to{\mathbb Z}$ defined by
\[
I_T(e) = \begin{cases}
1 & \text{if $e$ belongs to a circle block},\\
2 & \text{if $e$ belongs to a block path},\\
0 & \text{otherwise}.
\end{cases}
\]
The {\em characteristic vector} of $(T,\omega_T)$
within the oriented signed graph $(\Sigma,\omega)$ is the function $[\omega,\omega_T]\,I_T$.

A {\em minimal tour} on $T$ is a closed walk that uses every edge of $T$ and has minimum length. A subgraph of $T$ is a {\em circle-subtree} if it is a circle-tree and its circle blocks and block paths are circle blocks and block paths of $T$.

\begin{prop}[Existence and Uniqueness of Direction on a Sesqui-Eulerian Circle-Tree]
\label{Prime-Sesqui-Eulerian-Property}
If $T$ is a sesqui-Eulerian circle-tree, then:
\begin{enumerate}[\rm (a)]
\item
There exists a closed walk $W$ on $T$ such that
\begin{enumerate}[\rm (i)]
\item $W$ uses each edge of a circle block once and each edge of a block path twice, and
\item whenever $W$ meets a cut-vertex, it crosses from one block to another block.
\end{enumerate}
Moreover, each such closed walk $W$ is a minimal tour on $T$ with length $\ell(W)=\ell(T)$.
\item
Each minimal tour $W$ on $T$ satisfies the conditions {\rm(i)}, {\rm(ii)}, and $\ell(W)=\ell(T)$.
\item
There exists a unique direction $\omega_T$ of $T$ (up to negation of $\omega_T$) such that $(W,\omega_T)$ is coherent for each minimal tour $W$ on $T$.  The direction $\omega_T$ satisfies
\[
f_{(W,\,\omega_T)}=[\omega,\omega_T]\, I_T.
\]
\item
If $v$ is a cut-vertex of $T$ and $W=W_1W_2$,
where $W_1,W_2$ are closed sub-walks having initial and terminal vertices at $v$, then
both $W_1$ and $W_2$ are negative and both $(W_1,\omega_T)$ and $(W_2,\omega_T)$ are
incoherent at $v$ and coherent elsewhere.
\end{enumerate}
\end{prop}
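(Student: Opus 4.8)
The plan is to handle (a) and (b) together by a minimum-length analysis, then construct the direction in (c), and finally read off (d) from the coherence established there. For the lower bound I would observe that every tour uses each circle-block edge at least once, while each block-path edge is a cut-edge of $T$; since a closed walk crosses any cut-edge an even number of times and a tour uses every edge at least once, each block-path edge is traversed at least twice. Summing the multiplicities gives $\ell(W)\ge\sum_i\ell(C_i)+2\sum_j\ell(P_j)=\ell(T)$ for every tour $W$. I then construct a tour attaining this bound by a depth-first traversal of the tree of circle blocks: go once around each circle block, and at every cut-vertex lying on it detour along the adjoining block path into the attached subtree and return before continuing, with the initial excursion chosen so that crossing holds even at the starting vertex. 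This walk satisfies (i) and has length $\ell(T)$, so the minimum length is $\ell(T)$, every walk satisfying (i)--(ii) is minimal, and conversely equality in the bound forces every minimal tour to have multiplicity exactly $I_T(e)$ on each edge, which is (i). For (ii) the decisive bookkeeping is that at a cut-vertex $v$ each incident block contributes exactly two edge-ends (two circle edges used once, or one block-path edge used twice), so $W$ makes exactly two passes through $v$. Writing $T=T_1\cup T_2$ with $T_1\cap T_2=\{v\}$ (valid because $v$ lies on exactly two blocks), a pass internal to one block would force both passes to be internal to one side, so $W$ could never move between $T_1$ and $T_2$ and could not use all edges; with only two passes available this excludes the non-crossing case and proves (ii).

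For (c) I would first note that the Parity Condition (e) is exactly the solvability condition for orienting each circle block with a sink or source at each of its cut-vertices and with coherence elsewhere: traversing a circle $C$ once, Lemma~\ref{L:walksign} gives $\sigma(C)=(-1)^{k}$ with $k$ the number of incoherent vertices, and the incoherent vertices are precisely the sinks and sources, so prescribing $p$ of them is possible iff $\sigma(C)=(-1)^p$. Given a direction $\omega_T$, I verify coherence of a minimal tour $W$ vertex by vertex: at a non-cut vertex of a circle the absence of a sink or source makes $\omega_T$ pass straight through, which is coherence; at a cut-vertex $v$, property (ii) pairs one edge from each incident block in each pass, and the sink/source condition together with the global absence of a sink or source yields the required relation $\omega_T(v,\cdot)=-\omega_T(v,\cdot)$ on each paired pair (for a circle edge $a$ and block-path edge $c$ at $v$, $\omega_T(v,a)=\omega_T(v,b)=:s$ forces $\omega_T(v,c)=-s$, lest $v$ be a global sink or source). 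Running these implications in reverse, any orientation making all minimal tours coherent must satisfy both defining clauses of a direction at every vertex---the tour exercises every edge-end---so it is a direction; as there are exactly two opposite directions, this gives uniqueness up to negation (at a vertex shared by two circle blocks one invokes two tours with opposite local pairings to recover the sink/source structure). The flow identity is then immediate: $\omega_T$ assigns each edge a single orientation, so every occurrence of $e$ in $W$ contributes the same coupling value $[\omega,\omega_T](e)$ to $f_{(W,\omega_T)}(e)$, and by (i) there are $I_T(e)$ occurrences, giving $f_{(W,\omega_T)}(e)=I_T(e)\,[\omega,\omega_T](e)$.

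For (d) I would use, from (b)(ii), that $W$ meets the cut-vertex $v$ in exactly two passes, both crossing between $T_1$ and $T_2$. Splitting $W$ at $v$ therefore yields $W_1\subseteq T_1$ and $W_2\subseteq T_2$, each meeting $v$ only at its endpoints. Consequently each $W_i$ inherits coherence from $W$ at every vertex except possibly its own closure at $v$; and at $v$ the closure of $W_1$ pairs its first and last edges, which lie on the same side and carry equal $\omega_T$-values (two circle edges at a sink or source, or the two passages along a single block-path edge), so that $\omega_T(v,\cdot)+\omega_T(v,\cdot)\ne0$ and $W_1$ is incoherent exactly at $v$. Applying the closed-walk formula of Lemma~\ref{L:walksign} with a single incoherence, $k=1$, gives $\sigma(W_1)=-1$, and symmetrically $\sigma(W_2)=-1$, which is (d).

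The step I expect to be hardest is the crossing property (ii) and its conversion into coherence in (c): everything rests on the count that each block contributes exactly two edge-ends at a cut-vertex, hence exactly two passes, and on recognizing the Parity Condition---through Lemma~\ref{L:walksign}---as precisely the compatibility that places the sinks and sources at the cut-vertices, which is what turns crossing into coherence.
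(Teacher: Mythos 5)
Your proposal is correct, but it takes a genuinely different route from the paper's proof. The paper establishes part (a) by the covering-graph method (as it says explicitly, ``rather than working directly on $T$''): it lifts $T$ to its double cover $\widetilde T$, builds a circle $\tW$ there by lifting the arcs of each circle block with a ``jump'' at each cut-vertex --- the Parity Condition is exactly what makes the lift of a circle block close up after $p$ jumps --- proves $\tW$ is a circle by a minimal-counterexample induction on ``stout blocks,'' and then obtains the minimal tour $W=\pi(\tW)$ and the direction $\omega_T=\pi(\omega_{\tW})$ by projection; part (d) is then done by a sign bookkeeping of block paths. You instead work entirely downstairs on $T$: the per-edge counting bound $\ell(W)\ge\ell(T)$ (block-path edges are cut-edges, hence crossed an even number of times) plus a depth-first construction settles (a) and (b); the four-edge-end count at a cut-vertex gives your two-pass dichotomy proving the crossing property (ii), which is sound (with two traversals per incident block, the only alternative to both passes crossing is one internal pass per side, trapping the walk); Lemma~\ref{L:walksign} converts the Parity Condition into exact solvability of the sink/source prescription on each circle block, which the paper never states in this clean ``iff'' form; and taking $k=1$ in Lemma~\ref{L:walksign} yields (d) directly. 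What each approach buys: yours is elementary and self-contained, and its uniqueness argument is more explicit than the paper's one-line appeal to ``the tree-like structure of $T$''; the paper's construction is not dispensable in context, however, because the proof of Theorem~\ref{Half-Integer Scale Decomposition} invokes precisely the circle $\tW$ in $\widetilde T$ built inside this proof, and the lift embodies the resolution theme of the whole paper, so adopting your proof would require reworking that later argument. Two small points you should shore up: at a cut-vertex $v$ shared by two circle blocks you merely assert a second minimal tour with the opposite crossing pairing --- one sentence fixes this, since if $W=W_1W_2$ is the split at $v$ then $W_1W_2^{-1}$ is again a closed walk with the same edge multiplicities, hence a minimal tour, and it realizes the opposite pairing; and the assembly of the per-block orientations into a single global direction (propagating the sign constraints along the block tree) is only sketched, though the paper itself treats the existence of exactly two opposite directions as easy.
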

\begin{proof}
(a) We prove it by the covering graph method, rather than working
directly on $T$ as in \cite{Chen-Wang3}. Let $\widetilde T$ be the
double covering graph of $T$. We claim that there exists a directed
circle $(\tW,\omega_{\tW})$ such that (i$'$) $\tW$ covers each edge
of a circle block once and each edge of a block path twice, (ii$'$)
the orientations on each edge of $T$ induced from
$(\tW,\omega_{\tW})$ by the projection are identical, and (iii$'$)
the induced orientation $\omega_T$ from $\omega_{\tW}$ is a
direction of $T$.

If $T$ is a circle block $C$, the circle $C$ must be positive, since there is no cut-vertex.  Choose a direction $\omega_C$ of $C$.  By Lemma~\ref{L:closedwalklift}, $(C,\omega_C)$ lifts to a directed closed walk $(\widetilde{C},\widetilde{\omega}_C)$ in $\widetilde T$ that covers $C$ once.
The closed walk $\widetilde{C}$ must be a circle since a self-intersection of $\widetilde{C}$ implies a self-intersection of the circle $C$, which is impossible.

Now we describe how to lift a circle-tree $T$ that contains at least two circle blocks to a circle $\tW$ in $\widetilde T$.

In a circle block $C=v_0x_1\cdots x_n v_n$ ($v_n=v_0$), call an
\emph{arc} a part of the circle that connects consecutive
cut-vertices on $C$ and let $p$ be the number of cut-vertices in
$C$, which is also the number of arcs.  There are two ways to lift
each arc, which are determined by the lift of its initial vertex
$v$ to either $v^+$ or $v^-$.  If we lift an arc $A=v_0x_1\cdots
x_k v_k$ so that $v_0$ lifts to $v_0^\alpha$, then $v_k$ lifts to
$v_k^{\alpha\,\sigma(A)}$.  For the next arc, $A'=v_kx_{k+1}\cdots
x_\ell v_\ell$, the initial vertex $v_k$ lifts to
$v_k^{-\alpha\,\sigma(A)}$, and the terminal vertex $v_l$ lifts to
$v_l^{-\alpha\sigma(AA')}$. Thus $\widetilde A$ jumps to $\widetilde
A'$ since they have no vertex in common. We continue this process
through all the arcs of $C$. The last lifted arc $\widetilde A''$
ends at vertex $v_0^{(-1)^{p-1}\alpha\,\sigma(C)}$ because there are
$p-1$ jumps from the first arc to the last of the $p$ arcs in $C$;
since $\sigma(C)=(-1)^p$, $\widetilde A''$ ends at $v_0^{-\alpha}$,
leaving a jump to the initial vertex $v_0^\alpha$ of $\widetilde A$.
For the lift of $C$ to use in $\tW$ we arbitrarily choose one of the
two possible lifts described by this rule.

A block path $P$ has two lifts that are vertex-disjoint paths
$\widetilde P, \widetilde P^*$. We use both of them in $\tW$.

We now prove $\tW$ is a circle. If there exists any sesqui-Eulerian
circle-tree for which $\tW$ is not a circle, let $T$ be one with the
fewest edges. We say two circle blocks are
\emph{attached} if they have a common vertex. A \emph{stout block}
$B$ is a circle block $C_B$ of $T$ together with the
(necessarily negative) loops that are attached to it. An \emph{end stout
block} is a stout block such that $C_B$ is an end block in $T$ with
loops (other than $C_B$) deleted. The important fact about a stout block $B$ is that
each attached loop, when lifted to $\widetilde T$, connects two
consecutive lifted arcs of $C_B$ into a path.  It follows that $\tW$
is a circle if $T$ has only one stout block.  Thus we may assume $T$ 
has more than one stout block. Let $B$ be an end stout block, joined by
a block path $P$ to another stout block $B_1$ at the vertex $u$ of
$B_1$.  The lift of $B \cup P$ is a path in $\widetilde T$
connecting $u^+$ to $u^-$. Replace $B \cup P$ by a single negative
loop at $u$, forming a new circle-tree $T'$ with fewer edges, whose
lifted walk $\tW'$ is therefore a circle. Thus $\tW$ is a circle.

The circle $\tW$ covers each edge of a circle block once
and each edge of a block path twice. We give
it a direction $\omega_{\tW}$. Let $e$ be an edge of a block path
and let $\End(e)=\{u,v\}$; then $\tW$ contains
$e^+=u^+v^{\sigma(e)}$ and $e^-=u^-v^{-\sigma(e)}$.  Then we may
write $\tW=\tW_1u^+e^+v^{\sigma(e)}\tW_2v^{-\sigma(e)}e^-u^-$. It
follows that $\omega_{\tW}(u^+,e^+)=-\omega_{\tW}(u^+,\tilde x)$ (by
coherence of $\omega_{\tW}$) $=\omega_{\tW}(u^-,\tilde y)$ (by Lemma
\ref{L:walksign} and positivity of the edges in $\widetilde\Sigma$)
$=-\omega_{\tW}(u^-,e^-)$, where $\tilde x$ is the first edge in
$\tW_1$ and $\tilde y$ is its last edge. This implies that the
projected orientations on $e$ from $e^+$ and $e^-$ are identical.

We have obtained a directed circle $(\tW,\omega_{\tW})$ in
$\widetilde T$ that satisfies the conditions (i$'$)--(iii$'$).
We define the closed walk $W$ by $W=\pi(\tW)$ and its
direction $\omega_W$ by $\omega_W=\pi(\omega_{\tW})$. Since
$\pi$ is locally an incidence-preserving bijection, it is clear that $W$ satisfies
conditions (i) and (ii) and is a minimal tour on $T$.

(b) A minimal tour $W$ on $T$ must traverse each edge in a block
path at least twice, since each such edge is a cut-edge.
So $W$ has length at least $\ell(T)$. Since $W$ has minimum length,
it follows that $\ell(W)=\ell(T)$. The minimality of $\ell(W)$
obliges $W$ to satisfy conditions (i) and (ii).

(c) Let $\omega_T$ be the direction of $T$ obtained by projecting
the direction $\omega_{\tW}$. For an arbitrary
minimal tour $W$ on $T$, the fact that $\omega_T$ is a direction
forces $(W,\omega_W)$ to be a directed closed walk. The uniqueness
of $\omega_T$ up to sign follows from the tree-like structure of $T$.
Properties (i) and (ii) imply that $I_T$ is a flow on
$(T,\omega_T)$. Thus $f_{(W,\,\omega_T)} =[\omega,\omega_T]\, I_T$.

(d) To prove negativity of $W_1$, consider that each block path implies two minus signs in $\sigma(W)$, one for each circle block with which it is incident.  Hence $\sigma(W)=+$ and $\sigma(W_1)=\sigma(W_2)$.  If the block path contains the cut-vertex that separates $W_1$ and $W_2$, then one incident circle is in $W_1$ and contributes a minus sign to $\sigma(W_1)$ while the other incident circle is in $W_2$ and does not contribute to $\sigma(W_1)$.  No other block path affects the sign of $W_1$.
\end{proof}

The proof shows that the number of directed minimal tours
on a sesqui-Eulerian circle-tree $T$
(up to changing the initial vertex and reversing the direction)
is $2^{q}$, where $q$ is the number of circle blocks in $T$.

\begin{lem}[Minimality of Sesqui-Eulerian Circle-Trees]\label{Minimality-of-Prime-Sesqui-Eulerian-Signed-Graph}
No directed sesqui-Eulerian circle-tree properly contains another directed sesqui-Eulerian circle-tree.
\end{lem}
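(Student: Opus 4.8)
The plan is to argue by contradiction, the point being that containment of \emph{directed} circle-trees must respect the orientation, and that orientation-compatibility is destroyed at any vertex whose cut-vertex status differs between $T$ and $T'$. Throughout I read proper containment of directed circle-trees as follows: $(T',\omega_{T'})$ is properly contained in $(T,\omega_T)$ when $T'$ is a proper circle-subtree of $T$ (its circle blocks and block paths are circle blocks and block paths of $T$) and its direction $\omega_{T'}$ is the restriction $\omega_T|_{T'}$, equivalently when the characteristic vector $[\omega,\omega_{T'}]\,I_{T'}$ conforms to and is dominated by $[\omega,\omega_T]\,I_T$. (That this orientation compatibility is genuinely needed is illustrated by a positive circle block $C$ sitting in the interior of a longer circle-tree: as a signed graph $C$ is a sesqui-Eulerian circle-subtree, but the orientation it inherits from $T$ has a sink or a source at each cut-vertex of $T$ on $C$, so it is not a \emph{directed} sesqui-Eulerian circle-tree.)

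First I would record the structural consequence of Definition~\ref{cycle-tree-defn}(d): a cut-vertex $v$ of $T$ lying on a circle block $C$ is incident with $C$ and with exactly one further block. Hence deleting from $T$ the entire branch attached to $C$ at $v$ (that further block, be it a block-path edge or another circle block via a length-$0$ block path, together with everything beyond it) leaves $v$ incident with only the two edges of $C$.

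Next I would locate the crucial vertex. Contracting each circle block of $T$ to a point turns $T$ into a tree $\mathcal{T}$ whose edges are the block paths. The circle blocks of $T'$ form a nonempty connected subtree of $\mathcal{T}$, and since $T'\subsetneq T$ and a spanning connected subtree of a tree is the whole tree, this subtree is proper. Therefore some block path of $T$ joins a circle block $C$ of $T'$ to a circle block of $T$ outside $T'$. Let $v$ be the cut-vertex of $T$ where that omitted block path meets $C$. By the structural remark, in $T'$ the vertex $v$ retains only the two edges of $C$; thus $v$ is a cut-vertex of $T$ lying on the circle block $C$ of $T'$, yet $v$ is an ordinary degree-$2$ vertex of $C$ in $T'$, not a cut-vertex of $T'$.

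Now the contradiction comes from the two direction axioms. Because $v$ is a cut-vertex of $T$ on $C$, the definition of a direction forces $(C,\omega_T)$ to have a sink or a source at $v$; that is, the two edges of $C$ at $v$ carry equal values of $\omega_T(v,\cdot)$. Restricting to $T'$, these are the only two edges at $v$, so $(T',\omega_T|_{T'})=(T',\omega_{T'})$ has a sink or a source at $v$, contradicting the requirement that a direction of $T'$ leave $(T',\omega_{T'})$ with neither a sink nor a source. Hence no proper containment is possible and $T'=T$. The main obstacle in the write-up is precisely the bookkeeping of the third paragraph — confirming that proper containment forces a vertex whose cut-vertex status drops — after which the sink/source axiom closes the case at once; I would also note that the same conclusion can be seen in the double cover, where the minimal tours lift to circles $\widetilde{W},\widetilde{W'}$ with $\widetilde{W'}\subseteq\widetilde{W}$ and a graph circle cannot properly contain another.
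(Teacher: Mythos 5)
Your proof is correct and follows essentially the same route as the paper's: both locate a vertex on a circle block $C$ of $T'$ that is a cut-vertex of $T$ but not of $T'$, and then play the sink/source requirement that the direction of $T$ imposes on $(C,\omega_T)$ there against the no-sink/no-source (coherence) requirement that the direction of $T'$ imposes at the same vertex. The difference is only one of completeness: you make the containment notion explicit (circle-subtree containment, equivalently conformal domination of characteristic vectors, which is exactly what the application in Theorem~\ref{T:flows} supplies) and you actually prove, via the contracted tree, the existence of the crucial vertex, which the paper's proof merely asserts.
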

\begin{proof}
Let $(T,\omega_T)$ be a directed sesqui-Eulerian circle-tree that has a proper subgraph $T'$ which is also a sesqui-Eulerian circle-tree with the direction $\omega_T|_{T'}$, the restriction of $\omega_T$ to $T'$.
The circle blocks of $T'$ are certainly circle blocks of $T$.
There exist a circle block $C$ of $T'$ and a vertex $u$ of $C$ such that $u$ is not a cut-vertex in $T'$ but is a cut-vertex in $T$.  It follows that $(C,\omega_T)$ must be coherent at $u$ when $C$ is considered as a circle block in $T'$ but must be incoherent (be either a sink or a source) at $u$ when $C$ is considered as a circle block in $T$. This is a contradiction.
\end{proof}

\begin{lem}\label{Lift-Flow}
Every flow $f$ on $(\Sigma,\omega)$ can be lifted to a flow
$\tilde f$ on $(\widetilde\Sigma,\widetilde\omega)$.
\end{lem}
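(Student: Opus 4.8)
The plan is to reduce the statement to the machinery already developed for walks, exploiting the correspondence between integral flows and directed closed positive walks. First I would dispose of the trivial case: if $f$ is the zero flow, then the zero function on $E(\widetilde\Sigma)$ is a flow on $(\widetilde\Sigma,\widetilde\omega)$ whose projection is $f$, so from now on I may assume $f$ is nonzero.

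For nonzero $f$, I would invoke Corollary~\ref{flow-walk}(b) to produce a directed closed, positive walk $(W,\omega_W)$ on $\Sigma$ with $f=f_{(W,\,\omega_W)}$. The essential feature is that $W$ is closed and \emph{positive}, so by Lemma~\ref{L:closedwalklift}(c) its lift $(\widetilde W,\widetilde\omega_W)$ is again a directed closed walk in $\widetilde\Sigma$, rather than an open one.

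Now I would set $\widetilde f := f_{(\widetilde W,\,\widetilde\omega_W)}$. Because $\widetilde\Sigma$ is an ordinary (all-positive) unsigned graph, every closed walk in it is positive; hence by Lemma~\ref{L:walkflow} the function $\widetilde f$ is conservative at every vertex, i.e.\ $\widetilde f$ is a flow on $(\widetilde\Sigma,\widetilde\omega)$. Finally, Proposition~\ref{P:flowlift}(a) yields $\pi(\widetilde f)=\pi\big(f_{(\widetilde W,\,\widetilde\omega_W)}\big)=f_{(W,\,\omega_W)}=f$, so $\widetilde f$ is a lift of $f$ in the sense of the definition preceding Proposition~\ref{P:flowlift}, completing the proof.

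The only step requiring care is the verification that $\widetilde f$ is genuinely conservative on the covering graph, and this is exactly where positivity of $W$ is indispensable: it guarantees, via Lemma~\ref{L:closedwalklift}(c), that the lifted walk $\widetilde W$ closes up instead of remaining open, since an open lifted walk would leave a nonzero boundary contribution at its two endpoints (by the open case of Lemma~\ref{L:walkflow}). Once closedness of $\widetilde W$ is secured, conservation is automatic because the covering graph carries no signs, so no further computation is needed.
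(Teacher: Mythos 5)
Your proposal is correct and follows essentially the same route as the paper, whose proof is exactly the combination of Corollary~\ref{flow-walk}(b) and Proposition~\ref{P:flowlift}(a); you merely spell out the details the paper leaves implicit (the zero-flow case, and the verification via Lemma~\ref{L:closedwalklift}(c) and Lemma~\ref{L:walkflow} that the lifted closed walk in the all-positive covering graph yields a genuine flow).
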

\begin{proof}
This follows from (b) of Corollary~\ref{flow-walk} and Part (a) of Proposition~\ref{P:flowlift}.
\end{proof}

\begin{thm}[Resolution of Indecomposable Flows]\label{Lift:Indecomposable-Flow}
If $f$ is a conformally indecomposable flow on
$(\Sigma,\omega)$, then there exists a directed closed, positive walk
$(W,\omega_f)$ on $\Sigma(f)$ such that $f=f_{(W,\,\omega_f)}$.

Furthermore, if $(\tW,\,\widetilde{\omega}_f)$ is a lift of
$(W,\,\omega_f)$, then  $\tW$ is a circle, and
$f_{(\tW,\,\widetilde{\omega}_f)}$ is a lift of $f$ and is a conformally
indecomposable flow on $(\widetilde\Sigma,\widetilde\omega)$.
\end{thm}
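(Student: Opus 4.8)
The plan is to establish the two assertions separately, obtaining the first from Corollary~\ref{flow-walk} and reducing the second, through Proposition~\ref{L:Indecomposability-Minimality} and the projection $\pi$, to the classical fact that every conformally indecomposable flow on an \emph{unsigned} graph is a graphic circuit flow. For the first assertion I would begin by noting that $\Sigma(f)$ must be connected: otherwise, restricting $f$ to one connected component and to the union of the remaining components would display $f$ as a conformal sum of two nonzero flows, contradicting its indecomposability. Once $\Sigma(f)$ is known to be connected, Corollary~\ref{flow-walk}(a) directly yields a directed closed, positive walk $(W,\omega_f)$ on $\Sigma(f)$ with $f=f_{(W,\,\omega_f)}$.

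For the second assertion I would first reduce to the case $f\geq 0$. Since flows on a signed graph are unchanged under reorientation and the couplings upstairs and downstairs agree by \eqref{Lift-Coupling-Preserving}, Proposition~\ref{L:Indecomposability-Minimality} lets me replace $\omega$ by $\omega_f$ and $f$ by $|f|$ without affecting either hypothesis or conclusion. In this reduced situation $\omega_f=\omega$ and $f=f_W$ is the edge-traversal count of $W$. Because $W$ is positive, Lemma~\ref{L:closedwalklift}(c) shows its lift $\tW$ is closed, and Proposition~\ref{P:flowlift}(b) shows that $\tilde f:=f_{(\tW,\,\widetilde{\omega}_f)}$ is a nonnegative flow on $(\widetilde\Sigma,\widetilde\omega)$ with $\pi(\tilde f)=f$; here $\tilde f=f_{\tW}$ is itself the edge-traversal count of $\tW$. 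This already gives the claim that $f_{(\tW,\,\widetilde{\omega}_f)}$ is a lift of $f$.

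The core step is to prove that $\tilde f$ is minimal on the unsigned graph $(\widetilde\Sigma,\widetilde\omega)$, and here I would push any hypothetical decomposition down through $\pi$. Suppose $\tilde g$ is a nonnegative, nonzero flow with $\tilde g\leq\tilde f$ and $\tilde g\neq\tilde f$. By Lemma~\ref{L:Lift-Boundary-Operator}, $\pi(\tilde g)$ is a flow on $(\Sigma,\omega)$; since $\pi(\tilde g)(e)=\tilde g(\tilde e)+\tilde g(\tilde e^*)$, nonnegativity gives $0\leq\pi(\tilde g)\leq\pi(\tilde f)=f$ and, crucially, $\pi(\tilde g)\neq 0$, because a nonnegative function projects to $0$ only when it vanishes on both sheets $\tilde e,\tilde e^*$. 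Minimality of $f$ then forces $\pi(\tilde g)=f$, so $\pi(\tilde f-\tilde g)=0$ with $\tilde f-\tilde g\geq 0$, which forces $\tilde f-\tilde g=0$, contradicting $\tilde g\neq\tilde f$. Hence $\tilde f$ is minimal and therefore conformally indecomposable on $(\widetilde\Sigma,\widetilde\omega)$.

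It remains to identify $\tW$ as a circle. Since $\widetilde\Sigma$ is ordinary and unsigned, a conformally indecomposable (equivalently minimal nonnegative) flow on it is a graphic circuit flow, so $\tilde f$ equals $1$ on the edges of a single circle $\tilde C$ and $0$ elsewhere. As $\tilde f=f_{\tW}$ counts traversals, $\tW$ runs over each edge of $\tilde C$ exactly once and over no other edge; a closed walk covering the connected, $2$-regular graph $\tilde C$ with multiplicity one visits each vertex once and is thus the circle $\tilde C$ itself. I expect the genuine obstacle to lie in the projection argument of the previous paragraph: one must simultaneously prevent a nonzero subflow from collapsing to the zero flow under $\pi$ and use equality of projections to force equality upstairs, and it is precisely the passage to the nonnegative $|f|$ that blocks cancellation between the two sheets and makes this sandwiching valid; the orientation bookkeeping in the reduction to $f\geq 0$ is then routine via \eqref{Lift-Coupling-Preserving}.
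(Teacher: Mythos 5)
Your proposal is correct, and its skeleton matches the paper's: connectivity of $\Sigma(f)$ plus Corollary~\ref{flow-walk} for the walk representation, Proposition~\ref{P:flowlift} for the lift, a projection argument to transfer indecomposability upstairs, and the classical fact that indecomposable flows on an unsigned graph are circuit flows to identify $\tW$ as a circle. The one genuine divergence is in how the indecomposability of $f_{(\tW,\,\widetilde{\omega}_f)}$ is established. The paper works with a general signed $f$ throughout: it supposes a conformal decomposition $\tilde f=\tilde f_1+\tilde f_2$ upstairs, projects both summands, and uses the coupling computation $f_i(e)=[\omega,\omega_f](e)\,\pi(|\tilde f_i|)(e)$ to show the projected pieces are nonzero and conform to $f$, contradicting indecomposability downstairs. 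You instead normalize first, invoking Proposition~\ref{L:Indecomposability-Minimality} to replace $f$ by $|f|\geq 0$ over $\omega_f$, and then run a minimality sandwich: a dominated nonzero flow $\tilde g\leq\tilde f$ projects to a nonzero flow $\pi(\tilde g)\leq f$, minimality of $f$ forces $\pi(\tilde g)=f$, and nonnegativity of $\tilde f-\tilde g$ then forces $\tilde g=\tilde f$. Your version makes the no-cancellation mechanism completely transparent (with everything nonnegative, the two sheets over an edge cannot cancel under $\pi$) and needs only one dominated subflow rather than two conformal summands; what it costs is the reduction bookkeeping, namely verifying that normalization commutes with lifting --- that the lift of $\omega_f$ is the flow-orientation of the lifted flow, so that indecomposability over $(\widetilde\Sigma,\widetilde\omega)$ transfers to minimality of the traversal count over the reoriented cover. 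You correctly locate this in the coupling-preservation identity \eqref{Lift-Coupling-Preserving}, which is in fact the very same identity the paper deploys inside its sign computation, so the two arguments rest on identical foundations organized differently. Your explicit justification that a closed walk covering a circle with multiplicity one is the circle itself is a small refinement of the paper's bare assertion ``so $\tW$ is a circle,'' and your citation of Corollary~\ref{flow-walk}(a) (after proving connectivity) is, if anything, more precisely aimed than the paper's citation of part (b).
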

\begin{proof}
The conformal indecomposability of $f$ implies that $\Sigma(f)$ is
connected. By (b) of Corollary~\ref{flow-walk}, there exists a
directed closed, positive walk $(W,\omega_f)$ on $\Sigma(f)$ such
that $f=f_{(W,\,\omega_f)}$. Lift $(W,\omega_f)$ to a directed
closed walk $(\tW,\widetilde{\omega}_f)$ in $\widetilde\Sigma$. Now
$f$ is lifted to a flow $f_{(\tW,\,\widetilde{\omega}_f)}$ of
$(\widetilde\Sigma,\widetilde\omega)$ by
Proposition~\ref{P:flowlift}(a), denoted by
$\tilde f=f_{(\tW,\,\widetilde\omega_f)}$.

Suppose $\tilde f$ is decomposed as $\tilde f=\tilde f_1+\tilde
f_2$, where $\tilde f_i$ are nonzero integral flows and $\tilde
f_1\, \tilde f_2\geq 0$. Notice that
\[
f=[\omega,\omega_f]\,|f|, \quad \tilde f
=[\widetilde\omega,\widetilde{\omega}_f]\, |\tilde f|,
\quad
\tilde f_i=[\widetilde\omega,\widetilde{\omega}_f]\, |\tilde f_i|.
\]
Let $f_i=\pi(\tilde f_i)$, which are nonzero flows on
$(\Sigma,\omega)$. For each edge $e$ of $\Sigma$,
\begin{eqnarray*}
f_i(e) &=& [\tilde \omega,\widetilde{\omega}_f](e^+)\,|\tilde f_i|(e^+)
+ [\tilde \omega,\widetilde{\omega}_f](e^-)\,|\tilde f_i|(e^-) \\
&=& [\omega,\omega_f](e)\big(|\tilde f_i(e^+)|+|\tilde f_i(e^-)|\big) \\
&=& [\omega,\omega_f](e)\,\pi(|\tilde f_i|)(e).
\end{eqnarray*}
Since $\pi(|\tilde f_i|)\geq 0$, it follows that $f_i = [\omega,\omega_f]\, |f_i|$.
Thus $f=f_1+f_2$ by linearity of $\pi$, and $f_1\, f_2=|f_1|\, |f_2|\geq 0$, meaning that
$f$ is conformally decomposable. This is a contradiction.

The conformal indecomposability of $f_{(\tW,\,\widetilde{\omega}_f)}$ implies that it is a graphic circuit flow. So $\tW$ is a circle.
\end{proof}

\noindent {\bf Remark.} The projection of a flow is a flow, but the
projection of a conformally indecomposable flow is not necessarily a
conformally indecomposable flow.

\begin{thm}[Classification of Conformally Indecomposable Flows] \label{T:flows}
Let $f$ be a flow on $(\Sigma,\omega)$. Then $f$ is conformally
indecomposable if and only if $\Sigma(f)$ is a sesqui-Eulerian circle-tree
with a direction $\omega_f$ and
\begin{equation*}
f=[\omega,\omega_f]\, I_{\Sigma(f)}.
\end{equation*}
\end{thm}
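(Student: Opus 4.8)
The plan is to reduce the biconditional to the nonnegative case, resolve the ``only if'' direction by projecting the circle furnished by Theorem~\ref{Lift:Indecomposable-Flow}, and then obtain the ``if'' direction cheaply by feeding the ``only if'' direction into Lemma~\ref{Minimality-of-Prime-Sesqui-Eulerian-Signed-Graph}. First I would reduce to nonnegative flows: by Proposition~\ref{L:Indecomposability-Minimality}, $f$ is conformally indecomposable on $(\Sigma,\omega)$ iff $|f|$ is minimal on $(\Sigma,\omega_f)$. Since $\Sigma(f)=\Sigma(|f|)$ and $|f|=[\omega,\omega_f]\,f$, the displayed identity $f=[\omega,\omega_f]\,I_{\Sigma(f)}$ is equivalent to $|f|=I_{\Sigma(f)}$, and the property ``$\omega_f$ is a direction of $\Sigma(f)$'' is unchanged. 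Hence it suffices to prove, for a nonnegative nonzero flow $f$ (so that $\omega_f=\omega$): $f$ is minimal iff $\Sigma(f)$ is a sesqui-Eulerian circle-tree with direction $\omega$ and $f=I_{\Sigma(f)}$.

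For the forward direction I would apply Theorem~\ref{Lift:Indecomposable-Flow} to get a directed closed positive walk $(W,\omega)$ on $T:=\Sigma(f)$ with $f=f_W$ whose lift $\tW$ is a circle in $\widetilde\Sigma$. Since $\tW$ is a simple circle it uses each of the two lifts $\tilde e,\tilde e^*$ at most once, so $f(e)=f_W(e)\in\{1,2\}$ on $E(T)$. Minimality forces $W$ to be \emph{prime}: a proper closed positive subwalk $W'$ would produce a flow with $0\le f_{W'}\le f$ and $f_{W'}\ne f$. The crucial observation is that, writing $\tW^*$ for the image of $\tW$ under the deck involution ${}^*$, the union $\tW\cup\tW^*$ contains both lifts of every edge and both copies of every vertex of $T$, whence $\tW\cup\tW^*=\widetilde T$; that is, the double cover of $T$ is precisely the union of two circles interchanged by ${}^*$.

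The hard part will be to read off the circle-tree structure of $T$ from the identity $\widetilde T=\tW\cup\tW^*$. Here I would first show that a vertex $v$ is a cut-vertex of $T$ exactly when both $v^+$ and $v^-$ lie on $\tW$, and classify the local picture through the degree count $\deg_{\tW}(v^+)+\deg_{\tW}(v^-)=d_1(v)+2d_2(v)\in\{2,4\}$, where $d_i(v)$ counts the edge-ends at $v$ on edges with $f=i$; the admissible solutions are exactly the junctions of a circle-tree (two circle edges at an ordinary vertex, four at a circle--circle vertex, two circle edges plus one value-$2$ edge at a circle--path vertex, two value-$2$ edges at a path vertex). To globalize this I would imitate the lifting construction in the proof of Proposition~\ref{Prime-Sesqui-Eulerian-Property}(a), inducting on the number of value-$2$ edges by peeling off an end structure of $\widetilde T=\tW\cup\tW^*$ and collapsing it to a negative loop; this shows every block of $T$ is a circle (on value-$1$ edges) or a cut-edge (a value-$2$ edge), that each cut-vertex lies in exactly two blocks, and that end blocks are circles. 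The parity condition $\sigma(C)=(-1)^{p}$ for each circle block $C$ with $p$ cut-vertices is then exactly the requirement that the arcs of $C$ close up into the single circle $\tW$, verified by the same sign bookkeeping as in Proposition~\ref{Prime-Sesqui-Eulerian-Property}(a). Finally, the pattern $f=f_W\in\{1,2\}$ matches $I_T$ by the degree analysis, so $W$ is a minimal tour and Proposition~\ref{Prime-Sesqui-Eulerian-Property}(c) identifies $\omega$ with the direction of $T$ and yields $f=I_T$.

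For the converse I would reuse the forward direction. Assume $T:=\Sigma(f)$ is a sesqui-Eulerian circle-tree with direction $\omega$ and $f=I_T$, and let $g$ be any nonnegative nonzero flow with $g\le f$; I must show $g=f$. Choose a minimal nonzero subflow $g_0\le g$. By the forward direction, $\Sigma(g_0)$ is a sesqui-Eulerian circle-tree with $g_0=I_{\Sigma(g_0)}$ and direction $\omega|_{\Sigma(g_0)}$. Because $g_0\le I_T$, its circle blocks (value $1$) are circles of $T$, hence circle blocks of $T$, and its block-path edges (value $2$) are value-$2$ edges of $T$, so $\Sigma(g_0)$ is a directed sesqui-Eulerian circle-subtree of $(T,\omega)$. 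By Lemma~\ref{Minimality-of-Prime-Sesqui-Eulerian-Signed-Graph} a directed sesqui-Eulerian circle-tree cannot properly contain another, so $\Sigma(g_0)=T$ and $g_0=I_T=f$; then $f=g_0\le g\le f$ forces $g=f$, proving $f$ minimal.
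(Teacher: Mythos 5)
Your reduction to the nonnegative case and your converse direction are both sound, and the converse is essentially the paper's argument (decompose into minimal pieces, show each is a circle-subtree, apply Lemma~\ref{Minimality-of-Prime-Sesqui-Eulerian-Signed-Graph}). The genuine gap is in the forward direction, at exactly the point where the paper works hardest: deducing the circle-tree (cut-vertex/cut-edge) structure of $\Sigma(f)$ from the fact that the lift $\tW$ is a circle. Your tools there --- the identity $\tW\cup\tW^*=\widetilde T$, the local degree count $d_1(v)+2d_2(v)\in\{2,4\}$, and primality of $W$ against \emph{contiguous} closed positive subwalks --- are all strictly weaker than minimality and do not imply the key claim that a vertex with both lifts on $\tW$ is a cut-vertex. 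Concretely: take $\Sigma$ with two vertices $u,v$ and four parallel edges $p,q,r,s$ with $\sigma(p)=\sigma(r)=+1$, $\sigma(q)=\sigma(s)=-1$. The closed walk $W=u\,p\,v\,q\,u\,r\,v\,s\,u$ admits a coherent direction (with $x=\omega(u,p)$ one checks $\omega(u,q)=x$, $\omega(u,r)=\omega(u,s)=-x$, and coherence holds at every visit including closure), is positive, and lifts to the $4$-circle $\tW=u^+\,\tilde p\,v^+\,\tilde q\,u^-\,\tilde r\,v^-\,\tilde s\,u^+$, with $\tW\cup\tW^*=\widetilde\Sigma$. Here $f_W\equiv 1$, both vertices satisfy your degree count with $(d_1,d_2)=(4,0)$, and $W$ is prime in your sense (every proper contiguous closed subwalk, e.g.\ $pq$ or $rs$, is negative). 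Yet $f_W$ decomposes conformally into the two positive digon flows on $\{p,r\}$ and on $\{q,s\}$, so neither vertex is a cut-vertex and $\Sigma(f_W)$ is not a circle-tree. This is precisely the phenomenon behind the paper's Remark that projections of indecomposable flows need not be indecomposable: a circle lift alone proves nothing.

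What is missing is the paper's surgery argument, which is where minimality genuinely enters. Writing $W=W_1W_2$ at a repeated vertex $v$, indecomposability first forces each $(W_i,\omega_f)$ to be incoherent at $v$ (else $f_W=f_{W_1}+f_{W_2}$ conformally); then, if $W_1$ and $W_2$ met at a second vertex $u$, one splices an initial segment of $\tW$ with the involuted reverse of a segment of $\tW_2$ (i.e., a piece of $\tW^*$) to obtain a directed closed \emph{positive} walk $W'$ --- not a contiguous subwalk of $W$ --- with $f_{W'}\le f_W$ and $f_{W'}\ne f_W$, contradicting minimality. Only after this does the identification of blocks, block paths, and the parity condition (via Lemma~\ref{L:walksign} applied to the incoherences at cut-vertices) go through. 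Your proposed alternative --- ``peeling off an end structure'' by imitating the proof of Proposition~\ref{Prime-Sesqui-Eulerian-Property}(a) --- cannot substitute for this: that construction presupposes the circle-tree structure (you need to know an end circle block exists before you can peel it), and in the base case with no value-$2$ edges it gives no information, which is exactly where the counterexample above lives. So the forward direction needs the non-contiguous splice through the double cover; without it the proof is incomplete.
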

\begin{proof}
$\Rightarrow$: We assume $f$ is conformally indecomposable.
Recall the directed closed, positive walks
$(W,\omega_f)$ on $\Sigma(f)$ and $(\tW,\widetilde{\omega}_f)$ on $\widetilde{\Sigma}$ in
Theorem~\ref{Lift:Indecomposable-Flow}, where $\tW$ is a lift
of $W$. Note that $f=f_{(W,\,\omega_f)}$, $|f|=[\omega,\omega_f]f$, and $\tW$ is a circle. Let $W=v_0e_1v_1\cdots e_nv_n$ and
$\tW=v_0^{\alpha_0}\tilde{e}_1 v_1^{\alpha_1} \tilde{e}_2\cdots \tilde{e}_n v_n^{\alpha_n}$.
Since $\widetilde\Sigma$ is a double covering of $\Sigma$ and $\pi(\tW)=W$, it follows
that vertices and edges appear in $W$ at most twice.

If $W$ has no double vertices, that is, $W$ has no self-intersections, then
$(W,\omega_f)$ is a directed circle and $W$ is a positive circle
since it is a one-to-one image of a circle in $\widetilde\Sigma$.
Then $W$ is a sesqui-Eulerian circle-tree and $f=[\omega,\omega_f]\, I_{\Sigma(f)}$.

Assume that $W$ has some self-intersections. Let $v$ be a double
vertex of $W$. Rewrite $W$ as $W_1W_2$, where each $W_i$ is a closed
walk with the initial and terminal vertices at $v$. More
specifically, $W_1=v_0e_1v_1\cdots e_mv_m$ and
$W_2=v_me_{m+1}v_{m+1}\cdots e_nv_n$, where $v_0=v_m=v_n=v$. We
claim that each $W_i$ is negative, each $(W_i,\omega_f)$ is
incoherent at $v$ and coherent elsewhere, and $v$ is a
cut-vertex of $\Sigma(W)$.

Each $(W_i,\omega_f)$ is coherent everywhere
except at $v$. Suppose $(W_1,\omega_f)$ is coherent at $v$.
That forces $(W_2,\omega_f)$ to also be coherent at $v$. Then each
$(W_i,\omega_f)$ is a directed closed, positive walk. We thus have
$f_W=f_{W_1}+f_{W_2}$ within $(\Sigma,\omega_f)$, meaning that
$|f|$ is conformally decomposable; this is a contradiction by
Proposition~\ref{L:Indecomposability-Minimality}. Hence
$(W_i,\omega_f)$ must be incoherent at $v$.
Lemma~\ref{L:walkflow} implies that the closed walks $W_i$ are
negative. Write $\tW$ as $\tW_1\tW_2$, where
\[
\tW_1=v_0^{\alpha_0} \tilde{e}_1 v_1^{\alpha_1}\cdots \tilde{e}_m v_m^{\alpha_m}
\quad\text{and}\quad
\tW_2=v_m^{\alpha_m} \tilde{e}_{m+1} v_{m+1}^{\alpha_{m+1}}\cdots \tilde{e}_n v_n^{\alpha_n}
\]
are paths. We have $\alpha_0=\alpha_n=-\alpha_m$ by
Lemma~\ref{L:closedwalklift}.

Suppose $v$ is not a cut-vertex, that is, $W_1$ and $W_2$ meet at a
vertex $u$ other than $v$. Let $u$ occur as $v_k$ in $W_1$ and as
$v_h$ in $W_2$, that is, $u=v_k=v_h$, where $1\leq k\leq m-1$ and
$m+1\leq h\leq n-1$. Since $\tW$ is a circle, we have
$v_k^{\alpha_k}\neq v_h^{\alpha_h}$, consequently,
$\alpha_k=-\alpha_h$. Consider the closed walk
\[
\tW'=v_0^{\alpha_0} \tilde{e}_1 v_1^{\alpha_1}\cdots v_{k-1}^{\alpha_{k-1}}\tilde{e}_k
v_k^{\alpha_k}\tilde{e}_h^*v_{h-1}^{-\alpha_{h-1}}
\cdots v_{m+1}^{-\alpha_{m+1}}\tilde{e}_{m+1}^* v_m^{-\alpha_m}
= \tW'_1\tW'_2,
\]
where $v_k^{\alpha_k}=v_h^{-\alpha_h}$, $v_m^{-\alpha_m}=v_0^{\alpha_0}$, and
\[
\tW'_1=v_0^{\alpha_0} \tilde{e}_1 v_1^{\alpha_1}\cdots v_{k-1}^{\alpha_{k-1}}\tilde{e}_k
v_k^{\alpha_k}
\quad\text{and}\quad
\tW'_2=v_h^{-\alpha_h}\tilde{e}_h^*v_{h-1}^{-\alpha_{h-1}}
\cdots v_{m+1}^{-\alpha_{m+1}}\tilde{e}_{m+1}^* v_m^{-\alpha_m}.
\]
Let $s=\widetilde{\omega}_f(v^{\alpha_0}_0,\tilde{e}_1)={\alpha_0}{\omega}_f(v_0,{e}_1)$,
where the second equality follows
from the definition \eqref{Lift-Orientation}. Now
$\widetilde{\omega}_f(v^{\alpha_k}_k,\tilde{e}_k)=-s$, since
as an open walk $(\tW_1,\widetilde{\omega}_f)$ is directed.
Similarly,
$s=\widetilde{\omega}_f(v^{\alpha_m}_m,\tilde{e}_{m+1})=\alpha_m\omega_f(v_m,e_{m+1})$.
Hence $\widetilde{\omega}_f(v^{-\alpha_m}_m,\tilde{e}_{m+1}^*)
=-\alpha_m\omega_f(v_m,e_{m+1})=-s$, that is,
$(\tW',\widetilde{\omega}_f)$ is coherent at
$v_0^{\alpha_0}$. Thus $(\tW',\widetilde{\omega}_f)$ is a
directed closed walk on $\widetilde{\Sigma}(f)$.
Let $W'=\pi(\tW')$. It follows from
Lemma~\ref{L:closedwalklift} that $(W',\omega_f)$ is a directed
closed, positive walk on $\Sigma(f)$. Moreover, for a repeated
edge $e$ in $W'$: if $e$ is the projection of two edges in
$\tW'_1$ or two in $\tW'_2$, then $e$ is the projection of two edges in $\tW_1$ or two in $\tW_2$;
if $e$ is the projection of one edge in $\tW'_1$ and another in $\tW'_2$, then $e$ is
the projection of one edge in $\tW_1$ and one edge in $\tW_2$;
it follows that $e$ is a repeated edge in $W$.
Thus $E(W')$ is a proper sub-multiset of $E(W)$ with the same edge orientations. Consequently
$f_{W'}\leq f_W$ and $f_{W'}\neq f_W$. This is contradictory to the minimality of $f_W$.
It follows that a repeated vertex in $W$ is a cut-vertex of $\Sigma(W)$.

The subgraph $\Sigma(f)$ is obtained from the circle $\tW$ by projection, which identifies
each pair $v^+,v^-$ of vertices and each pair $\tilde e, \tilde e^*$ of edges of which both appear in $\tW$.
Since each identified vertex $v$ is a cut-vertex of $\Sigma(f)$, each identified edge $e$ is a cut-edge.
Removing all cut-edges from $\Sigma(f)$, every vertex has degree 2 or 4; in the latter case it is a cut-vertex that separates two of its incident edges from the other two.  It follows that in $\Sigma(f)$ with cut edges removed, every block is a circle.  The connected components of the identified vertices and
edges form block paths $P_j$ (possibly of zero length) joining
some pairs of the circles $C_i$.  Thus $\Sigma(f)$ satisfies
(a, b, d) of Definition \ref{cycle-tree-defn}.  It satisfies (c) because an end block that is an edge would have a vertex of degree 1 in $\Sigma(f)$, which is impossible.

Recall the incoherence of $(W_i,\omega_f)$ at the double
vertex $v$. It follows that each $(C_i,\omega_f)$ is
incoherent at the cut-vertices of $\Sigma(f)$ on $C_i$ and coherent
elsewhere. Thus the circle $C_i$ has sign $(-1)^p$ by
Lemma~\ref{L:walksign}, where $p$ is the number of cut-vertices on
$C_i$.

We have proved that $\Sigma(f)$ is a sesqui-Eulerian
circle-tree, $\omega_f$ is a direction of $\Sigma(f)$, and
$(W,\omega_f)$ is a directed minimal tour. Under these conditions, each
edge in a circle block appears once in $W$ and each
edge in a block path appears twice in $W$, so
$f_W=I_{\Sigma(f)}$ within $(\Sigma,\omega_f)$. Therefore
$f = f_{(W,\,\omega_f)} = [\omega,\omega_f]\,I_{\Sigma(f)}$.

$\Leftarrow$: We assume $\Sigma(f)$ is a sesqui-Eulerian circle-tree with direction $\omega_f$ and $f = [\omega,\omega_f]\,I_{\Sigma(f)}$.
Let us write $f=\sum_{i=1}^k f_i$, where the $f_i$ are conformally
indecomposable flows on $(\Sigma,\omega)$ that conform to $f$, and $k\geq1$.  Let
$(\Sigma_i,\omega_i)$ be sesqui-Eulerian circle-trees such that
$f_i= [\omega,\omega_i] \, I_{\Sigma_i}$. Since each $f_i$
conforms to the sign pattern of $f$, it follows that $\Sigma_i$
is a subgraph of $\Sigma(f)$ and $\omega_{f_i}$ is the restriction of $\omega_f$ to $\Sigma_i$.
The circle blocks of $\Sigma_i$ are
certainly circle blocks of $\Sigma(f)$. Consider a block
path $P$ of $\Sigma_i$. If $\ell(P)=0$, then $P$ is the intersection
of two circle blocks $C_1$ and $C_2$ of $\Sigma_i$.
Since $C_1, C_2 \subseteq \Sigma(f)$, $P$ is a block path in $\Sigma(f)$. If
$\ell(P)>0$, then $f_i(e)=\pm 2$ for all edges $e$ of $P$. Since
$f_i$ conforms to the sign pattern of $f$ and $|f_i|\leq|f|=I_{\Sigma(f)}$, it must be that
$f(e)=\pm 2$ for every edge $e$ of $P$.
This means that $P$ is a block path of $\Sigma(f)$.
Hence $(\Sigma_i,\omega_i)$ is a directed sesqui-Eulerian circle-tree contained in $(\Sigma(f),\omega_f)$.
It follows by Lemma~\ref{Minimality-of-Prime-Sesqui-Eulerian-Signed-Graph} that $\Sigma(f)=\Sigma_i$.
Therefore $k=1$, i.e., $f$ is conformally indecomposable.
\end{proof}

\begin{prop}\label{prime}
A signed graph $\Sigma$ is prime sesqui-Eulerian if and only if $\Sigma$ is a sesqui-Eulerian circle-tree.
\end{prop}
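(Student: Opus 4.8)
The plan is to convert primeness into conformal indecomposability of the flow carried by a witnessing walk, and then quote Theorem~\ref{T:flows} for the structural conclusion. Starting from a sesqui-Eulerian $\Sigma$ with a witnessing directed closed positive walk $(W,\omega_W)$, I would orient $\Sigma$ by $\omega:=\omega_W$. The requirement that $W$ traverse each repeated edge with the same orientation is exactly what makes $\omega_W$ well defined on repeated edges, and with $\omega=\omega_W$ every traversal contributes coupling $+1$; hence $f:=f_{(W,\,\omega_W)}$ is a nonnegative flow with $f(e)\in\{1,2\}$ for all $e$, and $\Sigma(f)=\Sigma$ (every edge is used, and $W$ is connected). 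So a witnessing walk is the same data as a nonnegative, everywhere-nonzero, $\{1,2\}$-valued flow together with a walk realizing it.

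The heart of the argument is the bridge that $W$ properly contains a directed closed positive subwalk if and only if $f$ is conformally decomposable. In one direction, a proper directed closed positive subwalk $W'$ gives, by Lemma~\ref{L:walkflow}, a flow $f_{(W',\,\omega_W)}$ that is nonnegative, nonzero (as $W'$ has positive length), and satisfies $f_{(W',\,\omega_W)}\le f$ with $\sum_e f_{(W',\,\omega_W)}(e)=\ell(W')<\ell(W)=\sum_e f(e)$; thus $f=f_{(W',\,\omega_W)}+\big(f-f_{(W',\,\omega_W)}\big)$ is a conformal decomposition. In the other direction, if $f=g+h$ with $g,h$ nonnegative and nonzero, then since $g$ conforms to $f\ge 0$ I apply Corollary~\ref{flow-walk} to realize $g$ as a directed closed positive walk; because $g\le f$ and the orientation is the common $\omega$, the edge-traversals of this walk form a proper sub-multiset of those of $W$, so it constitutes a proper closed positive subwalk of $W$. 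Therefore $\Sigma$ is prime sesqui-Eulerian precisely when some witnessing walk $W$ has a conformally indecomposable flow $f_{(W,\,\omega_W)}$.

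Both implications of the proposition then follow. If $\Sigma$ is prime, pick a witnessing $W$ with $f$ conformally indecomposable; Theorem~\ref{T:flows} immediately identifies $\Sigma=\Sigma(f)$ as a sesqui-Eulerian circle-tree. Conversely, if $\Sigma$ is a sesqui-Eulerian circle-tree, Proposition~\ref{Prime-Sesqui-Eulerian-Property} furnishes a minimal tour $W$, a directed closed positive walk using each circle-block edge once and each block-path edge twice with matching orientations on the doubled edges; hence $W$ is a witnessing walk and, taking the ambient orientation to be $\omega_T$, we get $f_{(W,\,\omega_W)}=I_\Sigma$. Since $\Sigma(I_\Sigma)=\Sigma$ is a sesqui-Eulerian circle-tree, the $\Leftarrow$ direction of Theorem~\ref{T:flows} makes $I_\Sigma$ conformally indecomposable, so by the bridge $W$ has no proper closed positive subwalk and $\Sigma$ is prime.

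I expect the converse half of the bridge---manufacturing an actual closed positive subwalk from an abstract decomposition $f=g+h$---to be the main obstacle, and the point where Corollary~\ref{flow-walk} is essential. The delicate issue is that a sub-flow need not appear as a contiguous arc of $W$ (on two positive digons sharing their endpoints, an Eulerian witnessing walk has no positive contiguous closed subarc, yet its flow splits), so the correspondence must be taken at the level of sub-multisets of edge-traversals rather than of literal subsequences; this is why primeness is to be matched with indecomposability of $f$ and not merely with the absence of contiguous closed positive subarcs. Once this matching is pinned down, the structural content is carried entirely by Theorem~\ref{T:flows}.
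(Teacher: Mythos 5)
Your first half of the bridge (a proper directed closed, positive subwalk of $W$ yields a conformal decomposition of $f_{(W,\,\omega_W)}$) is sound, and your handling of the direction ``circle-tree $\Rightarrow$ prime'' via the $\Leftarrow$ half of Theorem~\ref{T:flows} is essentially equivalent in content to the paper's appeal to Lemma~\ref{Minimality-of-Prime-Sesqui-Eulerian-Signed-Graph}. The genuine gap is the converse half of your bridge. The paper defines a subwalk as a \emph{contiguous} subsequence $v_ie_{i+1}v_{i+1}\cdots e_kv_k$, and ``prime'' is stated in terms of containment of such subwalks; as your own digon example shows, a conformal decomposition $f=g+h$ need not produce any contiguous closed positive subwalk of $W$, so ``decomposable $\Rightarrow$ proper subwalk of $W$'' is simply false for the paper's notion, and your repair --- reading containment at the level of sub-multisets of edge-traversals --- amounts to redefining primeness rather than proving the proposition as stated. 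There is also a quantifier slide: the definition of prime constrains \emph{every} witnessing walk, so even granting a per-walk bridge, your conclusion that $\Sigma$ is prime ``precisely when \emph{some} witnessing walk has conformally indecomposable flow'' does not follow from it.

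The paper closes exactly this gap with a construction your proposal lacks. Given a witnessing walk $(W,\omega)$ on a prime $\Sigma$, it conformally decomposes $f_W=\sum_{i=1}^m f_i$ into indecomposable pieces, applies Theorem~\ref{T:flows} to identify each $\Sigma(f_i)$ as a sesqui-Eulerian circle-tree $T_i$ whose direction agrees with $\omega$, realizes each $T_i$ as a directed closed, positive minimal tour $(W_i,\omega)$ via Proposition~\ref{Prime-Sesqui-Eulerian-Property}, and then --- using connectivity of $\Sigma$ and re-choosing initial vertices --- concatenates the $W_i$ into a \emph{new} directed closed, positive walk $(W',\omega)$ that uses every edge exactly as often as $W$ does, hence is itself a witnessing walk, and in which $W_1$ sits as an honest contiguous subwalk. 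Primeness applied to $W'$, rather than to the original $W$ (this is where the quantification over all witnessing walks is exploited), forces $W'=W_1$, i.e.\ $m=1$ and $\Sigma=T_1$. This concatenation is precisely the device that converts a multiset-level conformal decomposition into a literal subwalk; it is the missing idea in your argument, and without it what you prove is a statement about a modified notion of primeness, not Proposition~\ref{prime} as the paper defines and proves it.
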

\begin{proof}
If $\Sigma$ is a sesqui-Eulerian circle-tree, then by Proposition~\ref{Prime-Sesqui-Eulerian-Property} there exist an orientation $\omega$ on $\Sigma$ and a closed walk $W$ that uses every edge of $\Sigma$ once or twice such that $(W,\omega)$ is a directed closed, positive walk.
If $(W,\omega)$ did properly contain a directed closed, positive subwalk $(W',\omega')$, then $(\Sigma, \omega)$ would properly contain a directed sesqui-Eulerian signed graph $(\Sigma',\omega')$, where $\Sigma'$ consists of the vertices and edges of $W'$; but that would contradict Lemma \ref{Minimality-of-Prime-Sesqui-Eulerian-Signed-Graph}.
Thus $\Sigma$ is a prime sesqui-Eulerian signed graph.

Conversely, if $\Sigma$ is a prime sesqui-Eulerian signed graph, then
by definition there exist an orientation $\omega$ on $\Sigma$ and a
closed positive walk $W$ that uses every edge of $\Sigma$ once or
twice, such that $(W,\omega)$ is a directed walk.  The flow
$f_W$ on $(\Sigma,\omega)$ can be conformally decomposed into conformally indecomposable flows $f_i$ so that $f_W=\sum_{i=1}^m f_i\geq 0$. By Theorem~\ref{T:flows}, for each $f_i$ there exist a sesqui-Eulerian circle-tree $T_i$ and its direction $\omega_{T_i}$ such that $f_i=[\omega,\omega_{T_i}]\,I_{T_i}$. Hence
$\omega_{T_i}=\omega$ on $T_i$. By Theorem~\ref{Prime-Sesqui-Eulerian-Property}, there exists a directed closed, positive walk $(W_i,\omega_{T_i})$ on each $T_i$. 
Since $\Sigma$ is connected, we can construct a directed closed positive walk $(W',\omega)$ that uses every edge of $\Sigma$ exactly as often as does $(W,\omega)$, by concatenating the $W_i$ in a suitable order after changing their initial/terminal vertices as necessary. 
Since $\Sigma$ is prime and $(W',\omega)$ contains the directed closed, positive subwalk $(W_1,\omega_{T_1})$, $\Sigma$ must be the sesqui-Eulerian circle-tree $T_1$ and $W'=W_1$.
\end{proof}

\begin{cor}\label{minimal}
A signed graph is a minimal sesqui-Eulerian signed graph if and only if it is a signed-graph circuit.
\end{cor}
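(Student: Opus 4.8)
The plan is to reduce everything to the circle-tree picture and then read off minimality from the number of circle blocks. Since minimal sesqui-Eulerian signed graphs are prime (as noted just before Proposition~\ref{prime}), Proposition~\ref{prime} already tells us that every minimal sesqui-Eulerian signed graph is a sesqui-Eulerian circle-tree. First I would record the exact correspondence between signed-graph circuits and small circle-trees. Recall from the Parity Condition~(e) of Definition~\ref{cycle-tree-defn} that $\sigma(C_i)=(-1)^{p_i}$, where $p_i$ is the number of cut-vertices on the circle block $C_i$, and that $p_i$ is exactly the degree of $C_i$ in the tree obtained by contracting each circle block to a point. Hence a sesqui-Eulerian circle-tree with a single circle block is a positive circle (Type~I), while one with exactly two circle blocks has both blocks of degree $1$, so both negative, joined by a block path, and is therefore of Type~II (block path of length $0$) or Type~III (positive length). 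Thus the statement reduces to: a sesqui-Eulerian circle-tree is minimal as a sesqui-Eulerian signed graph if and only if it has at most two circle blocks.

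For the direction that circuits are minimal, I would first note that a signed-graph circuit is sesqui-Eulerian: each type satisfies (a)--(e) of Definition~\ref{cycle-tree-defn}, so it is a sesqui-Eulerian circle-tree and carries a minimal tour by Proposition~\ref{Prime-Sesqui-Eulerian-Property}(a), which is precisely the required closed positive walk. For minimality, suppose a circuit $\Sigma$ properly contained a sesqui-Eulerian subgraph. By finite descent that subgraph would contain a minimal sesqui-Eulerian subgraph, which is prime and therefore, by Proposition~\ref{prime}, a sesqui-Eulerian circle-tree $\Sigma''$ with $\Sigma''\subsetneq\Sigma$. The only circles available inside a circuit are its one or two circle blocks, so the circle blocks of $\Sigma''$ lie among them. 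If $\Sigma''$ had a single circle block, that block would have no cut-vertices and would have to be positive by the Parity Condition; this excludes a lone negative circle of a Type~II or~III circuit and leaves only the Type~I circle $\Sigma$ itself. If $\Sigma''$ used both circle blocks, then connectedness (condition~(a)) together with condition~(c) would force $\Sigma''$ to contain the entire connecting path, giving $\Sigma''=\Sigma$. In every case $\Sigma''$ fails to be proper, a contradiction.

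For the converse, suppose $\Sigma$ is a minimal sesqui-Eulerian signed graph; by the reduction it is a sesqui-Eulerian circle-tree, and it suffices to show it has at most two circle blocks. Assume instead it has at least three, and choose an end circle block $C$ (a leaf of the contracted tree, so of degree $1$) joined across a block path $P$ to a neighbouring circle block $C'$. If the degree of $C'$ is even, then $C'$ is a positive circle, hence a Type~I circuit and a proper sesqui-Eulerian subgraph; if the degree of $C'$ is odd, then both $C$ and $C'$ are negative, so $C\cup P\cup C'$ is a Type~II or~III circuit (according as $\ell(P)=0$ or $\ell(P)>0$) and again a proper sesqui-Eulerian subgraph. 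In both cases the extracted circuit is sesqui-Eulerian by the first direction, and it is proper because at least one further circle block remains; this contradicts minimality. Hence $\Sigma$ has at most two circle blocks and is a signed-graph circuit.

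The point requiring care is that these extractions do not conflict with Lemma~\ref{Minimality-of-Prime-Sesqui-Eulerian-Signed-Graph}: that lemma forbids a sub-circle-tree carrying the \emph{inherited} direction $\omega_T|_{T'}$, whereas each circuit I extract carries its own direction---coherent around a positive circle, or with sinks and sources only at the cut-vertices of a Type~II/III configuration---which in general differs from the sink/source pattern imposed by $\omega_T$. The other delicate input, implicit in the Parity Condition, is that a single negative circle is not sesqui-Eulerian: the only candidate positive closed walk traverses it twice, but then coherence at the repeated closing vertex would force the once-around closed walk to be coherent everywhere, hence positive by Lemma~\ref{L:walksign}, contradicting $\sigma(C)=-1$. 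Thus no lone negative circle can appear as a proper sesqui-Eulerian subgraph, which is exactly what makes two negative circles, rather than one, the minimal unbalanced configuration.
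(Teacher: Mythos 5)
Your proposal is correct and follows essentially the same route as the paper: reduce via primality and Proposition~\ref{prime} to sesqui-Eulerian circle-trees, then use the Parity Condition on end blocks to identify circle-trees with one circle block as Type~I circuits and those with two as Type~II/III. The paper dismisses the converse (circuits are minimal) as obvious, whereas you verify it carefully---including the correct observation that Lemma~\ref{Minimality-of-Prime-Sesqui-Eulerian-Signed-Graph} concerns inherited directions and so does not directly apply, and that a lone negative circle is not sesqui-Eulerian---which is a legitimate and welcome filling-in of detail rather than a different method.
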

\begin{proof}
Let $\Sigma$ be a minimal sesqui-Eulerian signed graph.  Since it is prime, it is a sesqui-Eulerian circle-tree by Proposition \ref{prime}.  If it has only one circle block, it is a positive circle, i.e., a circuit of Type I.  If it has more than one circle block, it has two (or more) end blocks $C_1, C_2$, each of which is a negative circle, and it contains a path $P$ (possibly of length 0) connecting those blocks.  Then $C_1 \cup P \cup C_2$ is a circuit of Type II or III.

Conversely, a signed-graph circuit is obviously a sesqui-Eulerian circle-tree and minimal.
\end{proof}

\begin{thm}[Half-Integer Decomposition]\label{Half-Integer Scale Decomposition}
Let $T$ be a sesqui-Eulerian circle-tree with a direction $\omega_T$.
Either $T$ is a signed-graph circuit, or there exists a closed, positive walk $W$ on $T$,
\[
W=P_1C_1P_2\cdots P_nC_{n}, \quad n\geq 2,
\]
satisfying the following conditions:
\begin{enumerate}[\rm (a)]
\item
The $C_i$, $1\leq i\leq n$, are the end blocks of $T$, and the $P_i$ are paths of positive length.
\item
Each edge of a non-end circle block appears in exactly one of the paths $P_i$, and each edge of a block path appears in exactly two of the paths $P_i$.
\item
Each $(C_{i}P_{i+1}C_{i+1},\omega_T)$, where $C_{n+1}=C_1$, is a directed circuit of Type III.
\item
$T$ is a conformal half-integral linear combination of signed-graph circuits; more precisely,
$$I_T = \frac{1}{2} \sum_{i=1}^{n} I_{T_i},$$
where $T_i$ is the restriction of $T$ to the subgraph $C_{i} \cup P_{i+1} \cup C_{i+1}$.
\end{enumerate}
\end{thm}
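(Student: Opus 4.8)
\emph{The plan.} I would first settle the dichotomy. By the Parity Condition (Definition~\ref{cycle-tree-defn}(e)) an end block, having one cut-vertex, is a negative circle, while a single circle block, having no cut-vertex, is positive; hence $T$ is a signed-graph circuit exactly when it has at most two circle blocks---one circle giving Type~I, and two circles giving Type~II or Type~III according as the block path joining them has length $0$ or positive length. So I may assume $T$ has at least three circle blocks, whence it has at least two end blocks and at least one non-end (internal) circle block, and I must construct $W$. I would invoke Proposition~\ref{Prime-Sesqui-Eulerian-Property} to obtain the direction $\omega_T$ together with a minimal tour $W$ that is coherent with $\omega_T$, uses each circle-block edge once and each block-path edge twice, and crosses from one block to another at every cut-vertex.

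Next I would read off the cyclic shape of $W$. Let $C$ be an end block with unique cut-vertex $w$. By the block-crossing property the two tour-edges at any visit to $w$ cannot both lie in $C$; since the edges of $C$ form a circle attached to the rest of $T$ only at $w$ and each is used exactly once, they must be traversed in a single contiguous pass around $C$. Thus each end block appears in $W$ as one contiguous circle, and writing the $n$ end-block passes as $C_1,\dots,C_n$ and the maximal interior subwalks between them as $P_1,\dots,P_n$ puts $W$ in the cyclic form $W=P_1C_1P_2\cdots P_nC_n$, giving the first assertion of~(a).

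The heart of the proof is to show that each $P_i$ is a simple path of positive length and to count edge multiplicities. I would project $W$ onto the tree $\mathcal T$ whose vertices are the circle blocks and whose edges are the block paths. Since $W$ traverses each block path twice, its projection is an Euler tour of $\mathcal T$ with every edge doubled, and the $P_i$ project to the segments lying strictly between consecutive visits to leaves (the end blocks). Were such a segment to revisit a vertex $z$ of $\mathcal T$, the enclosed closed subwalk would leave $z$ along some pendant edge and, the tree having no other route back, return along the second copy of that edge; the whole pendant subtree---and with it one of its leaves---would then be traversed strictly between two consecutive leaf visits, a contradiction. Hence each segment is a simple path in $\mathcal T$, so $P_i$ meets each internal circle block it encounters in a single arc and each block path once, making $P_i$ a simple path; it has positive length because, $\mathcal T$ having at least three vertices, no two leaves are adjacent. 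This proves~(a). Moreover each internal circle-block edge lies in a single arc, used once by $W$, hence in exactly one $P_i$, while each block-path edge, used twice by $W$, contributes a descending and an ascending passage lying in two distinct segments, hence in exactly two $P_i$; this is~(b).

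Finally I would deduce~(c) and~(d). For~(c), $C_i$ and $C_{i+1}$ are negative circles meeting the remainder of $T$ only at their cut-vertices and $P_{i+1}$ is a positive-length simple path internally disjoint from both, so $C_i\cup P_{i+1}\cup C_{i+1}$ is a circuit of Type~III; it is directed by $\omega_T$ since $\omega_T$ places a sink or a source on each of $C_i,C_{i+1}$ at its cut-vertex while $W$ stays coherent along $P_{i+1}$. For~(d) I would compute $\tfrac12\sum_i I_{T_i}$ edgewise using~(b): an edge of an end block $C_j$ lies only in the two circuits $T_{j-1},T_j$, contributing $\tfrac12(1+1)=1=I_T$; an internal circle-block edge lies only in the one circuit whose connecting path contains it, contributing $\tfrac12\cdot 2=1=I_T$; and a block-path edge lies in the connecting paths of exactly two circuits, contributing $\tfrac12(2+2)=2=I_T$. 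Thus $I_T=\tfrac12\sum_{i=1}^n I_{T_i}$. I expect the third paragraph---establishing that the minimal tour is a doubled-tree Euler tour whose inter-leaf segments are simple paths---to be the principal obstacle; once it is in place, parts~(c) and~(d) are routine bookkeeping.
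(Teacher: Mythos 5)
Your proposal is correct, but it takes a genuinely different route from the paper's. The paper proves the theorem by going back up to the double cover: it takes the covering circle $\tW$ of $\widetilde T$ constructed in the proof of Proposition~\ref{Prime-Sesqui-Eulerian-Property}, observes that it has the cyclic form $\widetilde P_1\widetilde C_1\cdots\widetilde P_n\widetilde C_n$ with the $\widetilde C_i$ lifts of the end blocks, and then gets $W$ and properties (a)--(c) essentially by projection --- a circle visits each lifted end block exactly once in cyclic order, so the segment structure is automatic --- with (d) obtained by summing indicator functions. You instead work entirely in the base graph: you take the minimal tour of Proposition~\ref{Prime-Sesqui-Eulerian-Property}, show via the block-crossing property that each end block is traversed in one contiguous pass, and then establish the key structural fact --- that each inter-pass segment $P_i$ is a simple path of positive length --- by projecting $W$ to the block tree $\mathcal T$ and arguing that a vertex revisit inside a segment would trap both copies of a pendant tree edge, hence the entire pendant subtree together with one of its leaves (i.e., a full end-block pass), strictly inside the segment. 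That doubled-tree Euler-tour argument is exactly the work the covering circle does for the paper; your version is more elementary and self-contained, and it also makes explicit two points the paper glosses: the dichotomy (``$T$ is a circuit if and only if it has at most two circle blocks'') and the positive length of the $P_i$. The paper's route is shorter given the covering machinery already built and fits its resolution-of-singularities theme; your edgewise verification of (d) is in fact cleaner than the paper's closing display, whose indices $C_0$, $P_{n+1}$ do not match the statement. One compressed step to expand in a final write-up: in the tree argument, the reason the whole pendant subtree is swallowed by the enclosed closed subwalk is that \emph{both} traversals of the pendant edge occur inside it, so no traversal of any edge of that subtree can occur elsewhere in the tour; and a visit to a leaf of $\mathcal T$ is a full end-block pass by your contiguity claim --- both inferences should be stated explicitly.
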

\begin{proof}
Let $\tW$ be the circle in $\widetilde{T}$ that covers $T$, constructed in the proof of Proposition \ref{Prime-Sesqui-Eulerian-Property}.
This circle has the form $\widetilde{P}_1\widetilde{C}_1\widetilde{P}_2\widetilde{C}_2\cdots\widetilde{P}_n\widetilde{C}_n$, where the $\widetilde{C}_i$ are paths that are lifts of the $n$ end blocks $C_1,C_2,\ldots,C_n$ of $T$ and the $\widetilde{P}_i$ are connecting paths, necessarily of positive length because if two end blocks had a common vertex, $T$ would be a circuit of Type II.
The projection of $\tW$ is
\[
W=P_1C_1P_2 \cdots C_n,
\]
where $P_i=\pi(\widetilde{P}_i)$ is a path covered once by $\widetilde{P}_i$ because $P_i$ connects two different end blocks of a sesqui-Eulerian circle-tree.  Each walk $C_{i}P_{i+1}C_{i+1}$ is coherent because $\tW$ is coherent.
Property (b) follows from Part (a)(i) of Proposition \ref{Prime-Sesqui-Eulerian-Property}.
Property (c) follows from coherence of $\tW$ and of its projection $W$, Part (c) of Proposition \ref{Prime-Sesqui-Eulerian-Property}.
Since each end block is a negative circle, each $C_{i} \cup P_{i+1} \cup C_{i+1}$ is a directed circuit of Type III.
Each edge of a non-end circle block
appears in one of the paths $P_i$, and each edge of a block
path appears in two of the paths $P_i$.  Hence $I_T=f_{W}$ and
\[
I_T=I_{C_0}+I_{P_{n+1}}+\sum_{i=1}^n \big(I_{C_i}+I_{P_i}\big)
= \frac{1}{2} \sum_{i=0}^n I_{T_i}.
\qedhere
\]
\end{proof}

For an example, the weights on the edges of the circle-tree in
Figure~\ref{Prime-Sesqui-Eulerian-Signed-Graph-Exmp} form,
with respect to the given direction, a conformally indecomposable flow which
is the characteristic vector of the circle-tree.  This
conformally indecomposable flow can be decomposed into one-half of
the sum of three signed-graph circuit flows, as demonstrated in
Figure~\ref{Figg-Half-Integer-Decom}.

\begin{figure}[h]
\centering
\subfigure{\includegraphics[height=20mm]{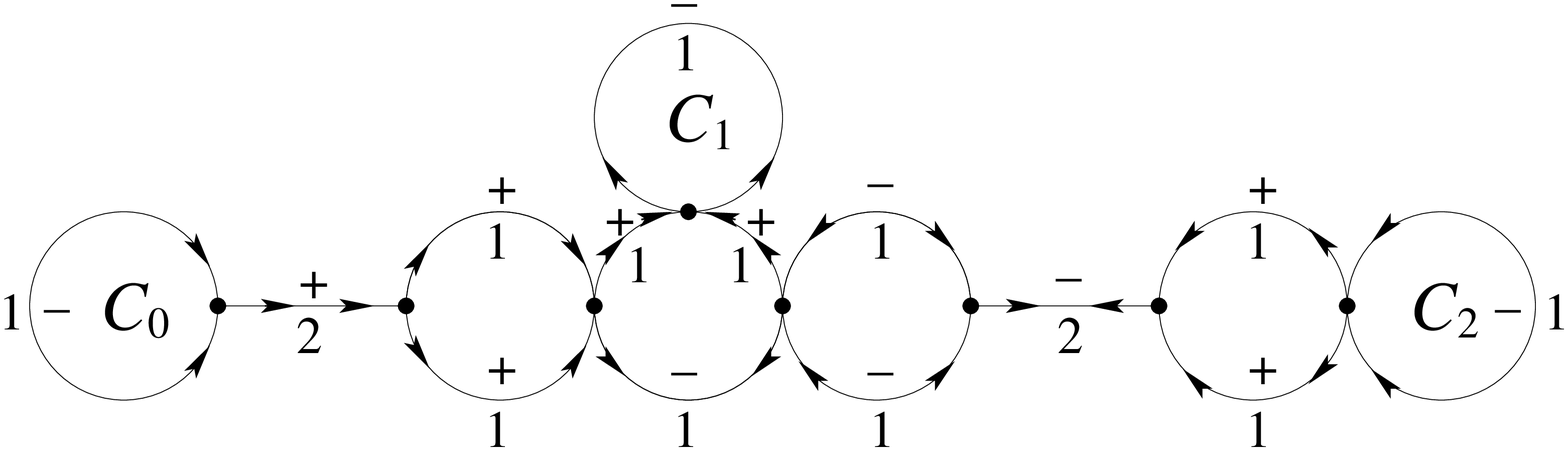}}
$\begin{array}{c} =\\ \\ \\
\end{array}$
\subfigure{\includegraphics[height=14mm]{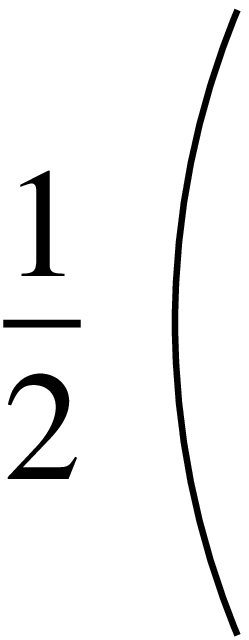}}
\hspace{-2mm}
\subfigure{\includegraphics[height=20mm]{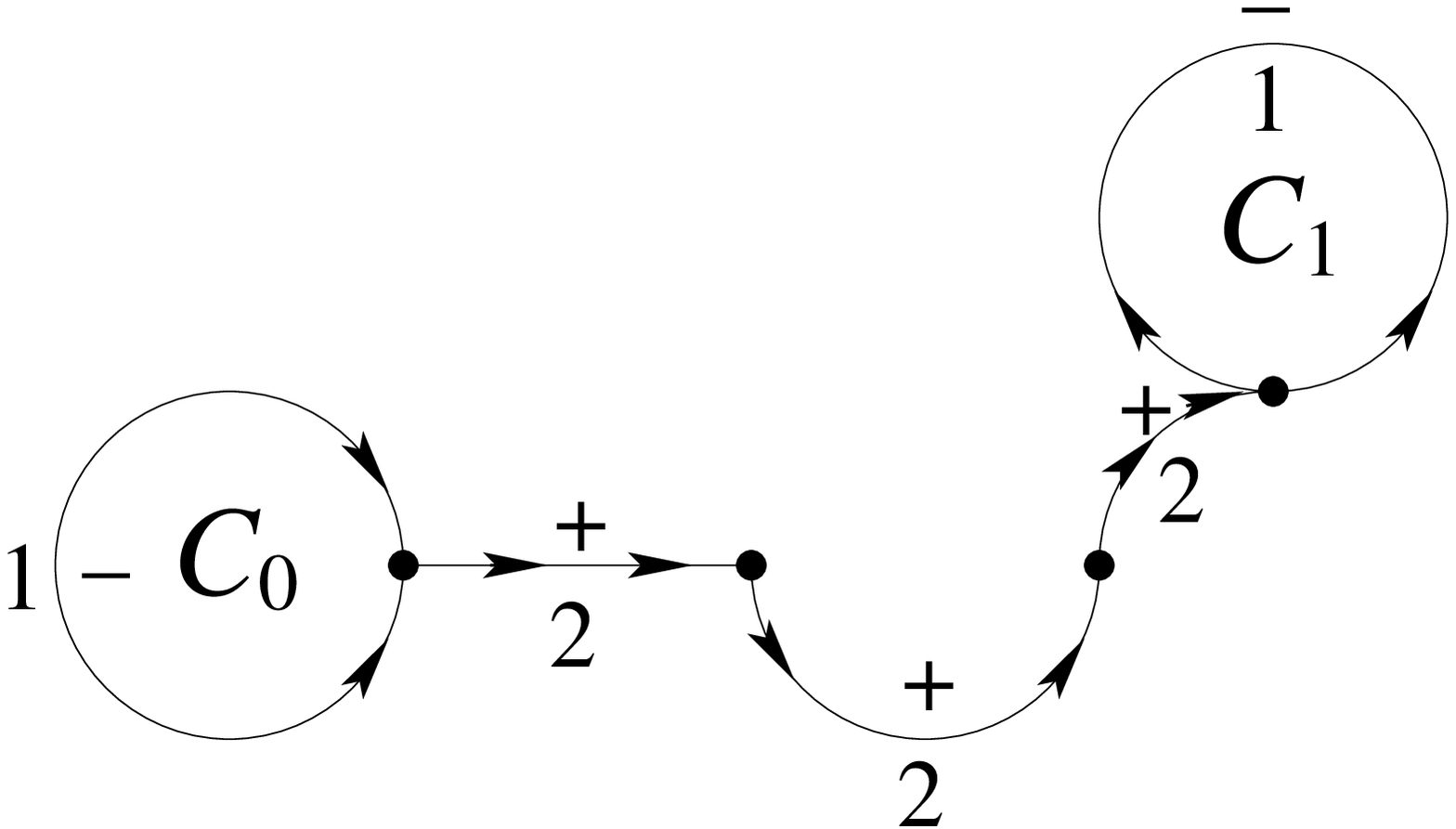}}
\hspace{0mm} \subfigure{\includegraphics[height=7mm]{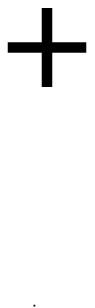}}
\hspace{0mm}
\subfigure{\includegraphics[height=20mm]{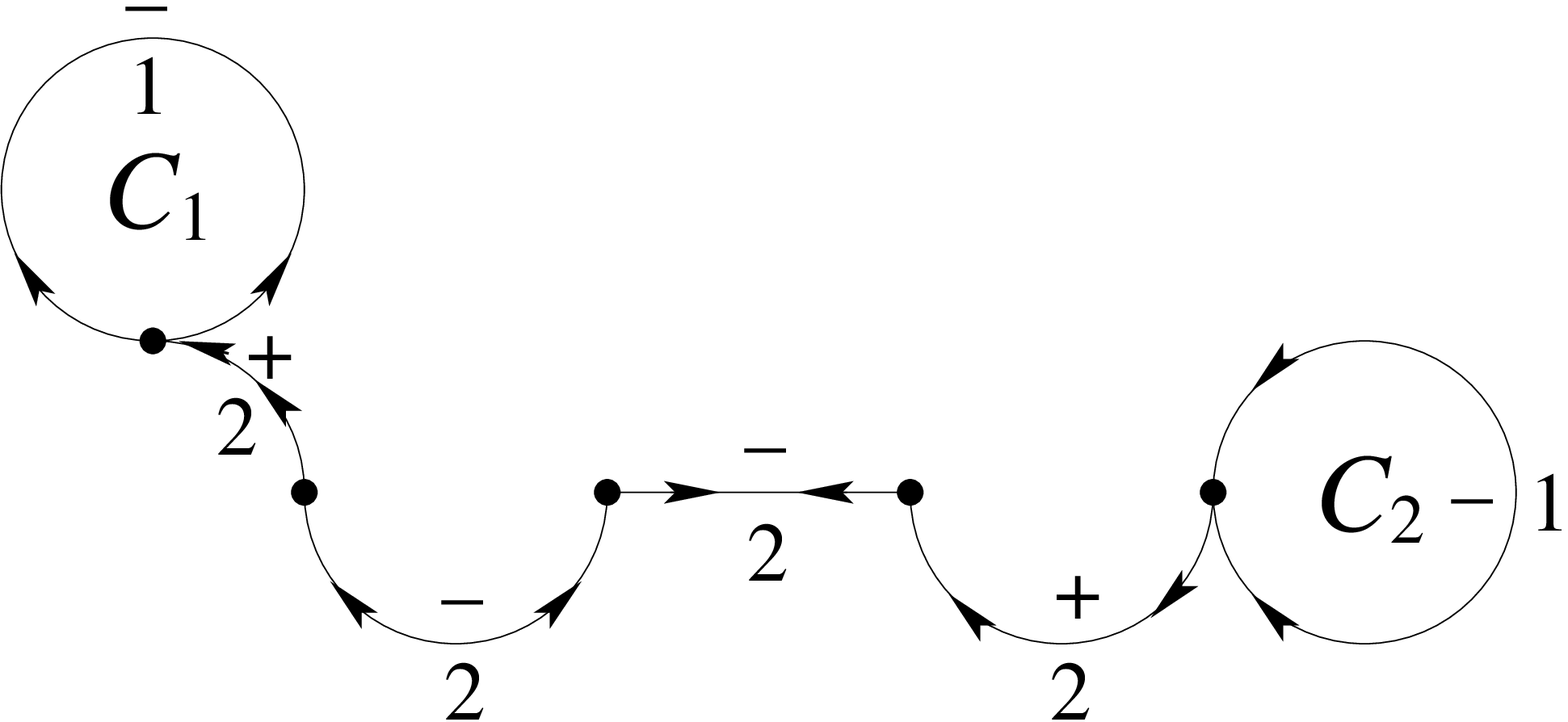}}
\hspace{0mm} \subfigure{\includegraphics[height=7mm]{plus-sign.eps}}
\hspace{0mm}
\subfigure{\includegraphics[height=12mm]{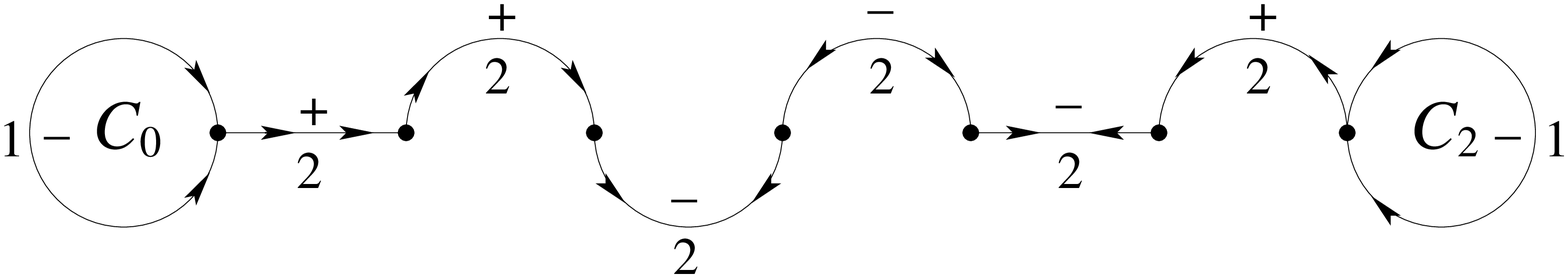}}
\subfigure{\includegraphics[height=14mm]{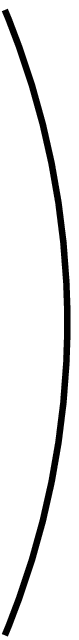}}
\caption{A conformally indecomposable flow is decomposed conformally into halves of signed-graph circuit flows of Type III.}
\label{Figg-Half-Integer-Decom}
\end{figure}

The half-integrality phenomenon in Theorem~\ref{Half-Integer Scale Decomposition}(d)
has appeared previously in connection with flows on
signed graphs (though not so named), possibly first in work of
Bolker (e.g., \cite{B2}) and later in \cite{Hoch,AK,HIB}
(see \cite{HIB} for references and explanation); and
also in \cite[Corollary 1.4, p.~283]{Geelen-Guenin},
which concerns a completely different problem.
The phenomenon in both cases is a consequence of the signs on the edges.

\begin{figure}[h]
\includegraphics[width=70mm]{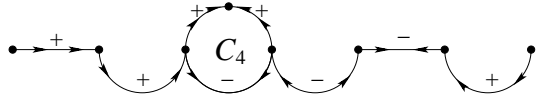}
\caption{A maximal independent set of the sesqui-Eulerian circle-tree in Fig.~\ref{Prime-Sesqui-Eulerian-Signed-Graph-Exmp}.}
\label{Max-independ-set}
\end{figure}

One may consider decomposition of integral flows without conforming sign patterns. It is clear from the definition of conformal decomposition that every nonzero integral flow
is a positive integral linear combination of conformally
indecomposable flows.  It is also true that each conformally
indecomposable flow can be further decomposed into an integral linear
combination of signed-graph circuit flows without conforming
sign patterns. Thus each integral flow on a signed graph is an
integral linear combination of circuit flows.
This fact is already explicitly given in terms of a maximal
independent edge set (that is, a matroid basis) in \cite[Eq.~(4.7) of Theorem~4.9, p.~275]{Chen-Wang1}.
For instance, the signed graph in
Figure~6 is a maximal independent set of the sesqui-Eulerian circle-tree in
Figure~\ref{Prime-Sesqui-Eulerian-Signed-Graph-Exmp}. The conformally
indecomposable flow in Figure~\ref{Prime-Sesqui-Eulerian-Signed-Graph-Exmp}
is further decomposed into circuit flows in Figure~7
without conforming signs.  We summarize these
observations in the following Corollary~\ref{Pos-decom-integer-decom}.

\begin{figure}[h]
\centering
\subfigure{\includegraphics[height=20mm]{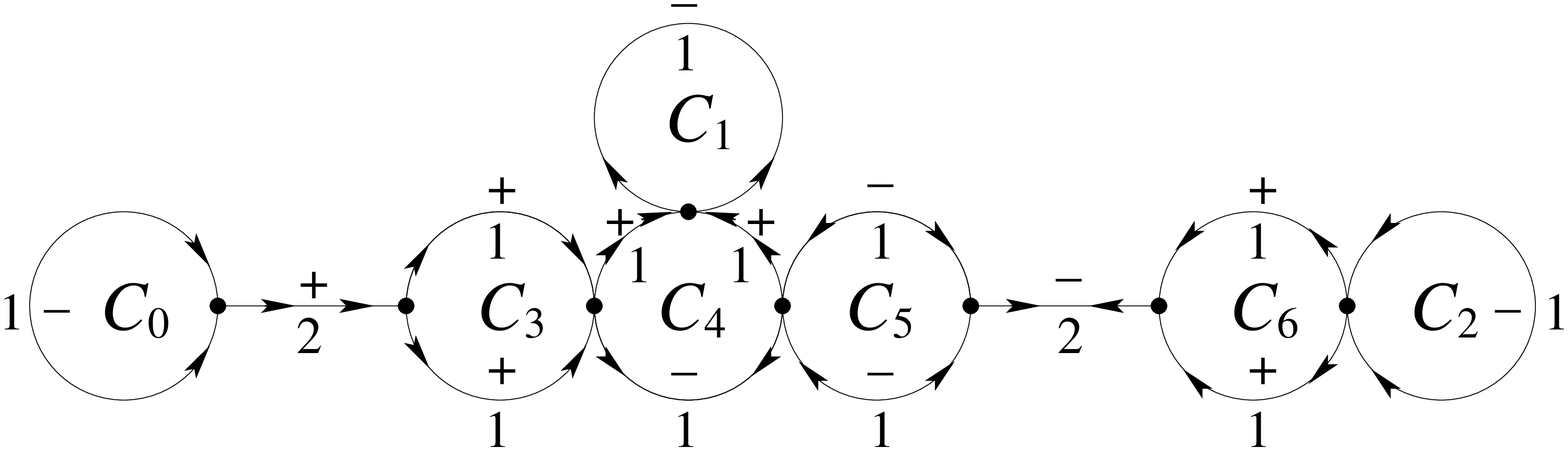}}
\hspace{0mm}$\begin{array}{c} =\\ \\ \\
\end{array}$
\hspace{-1mm}
\subfigure{\includegraphics[height=11mm]{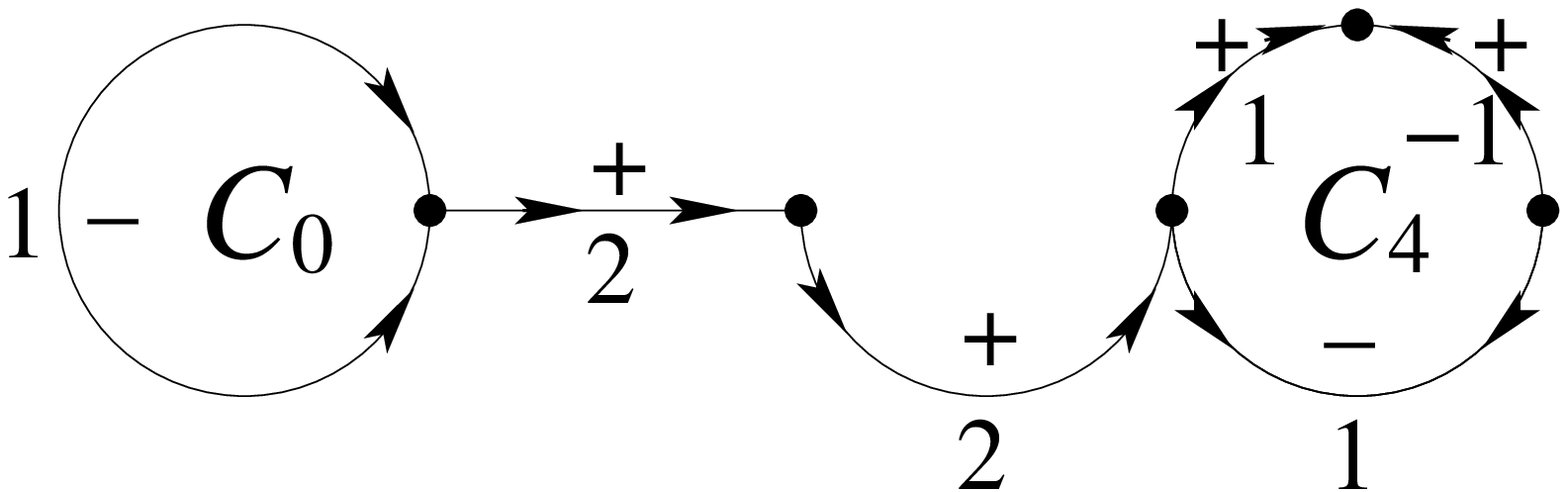}}
\hspace{0mm} \subfigure{\includegraphics[height=7mm]{plus-sign.eps}}
\hspace{0mm}
\subfigure{\includegraphics[height=20mm]{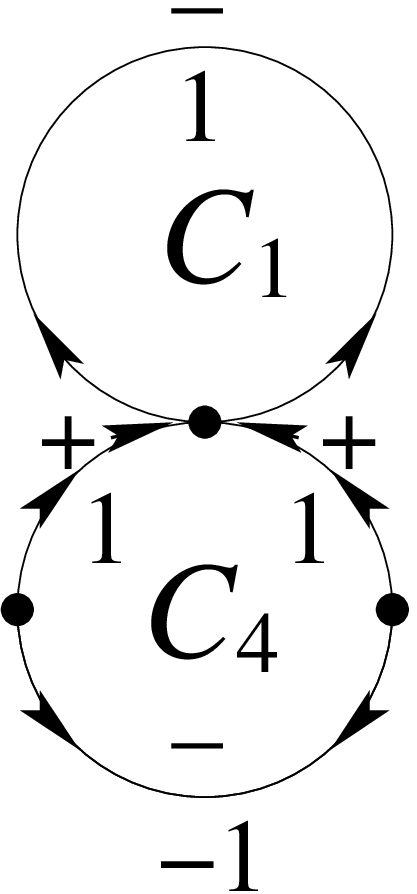}}\\
\hspace{0mm} \subfigure{\includegraphics[height=7mm]{plus-sign.eps}}
\hspace{0mm}
\subfigure{\includegraphics[height=11mm]{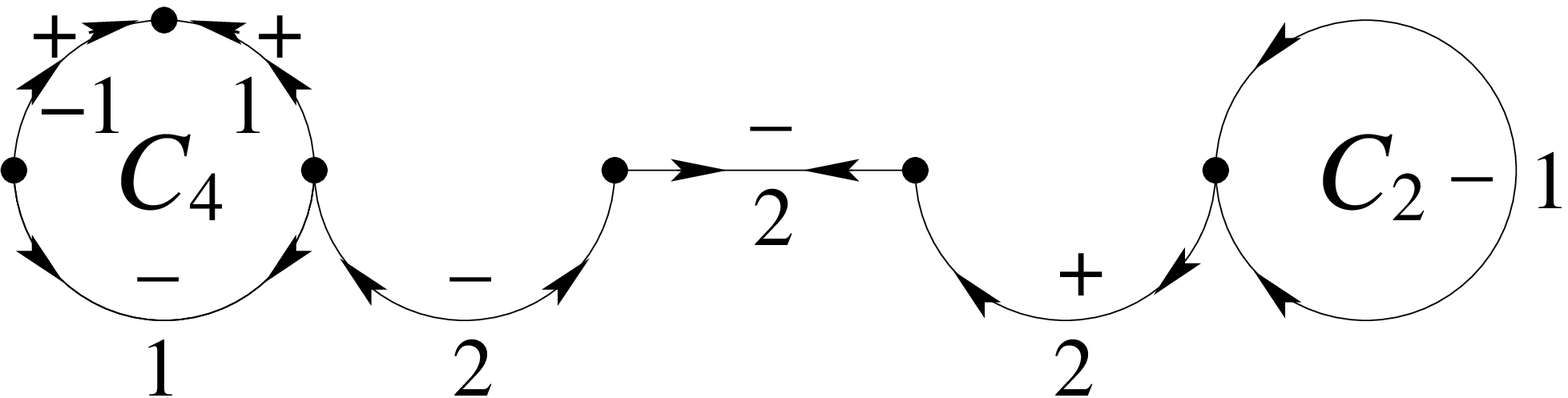}}
\hspace{0mm} \subfigure{\includegraphics[height=7mm]{plus-sign.eps}}
\hspace{0mm}
\subfigure{\includegraphics[height=12mm]{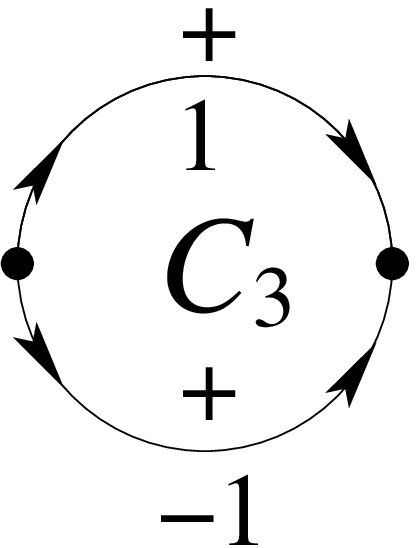}}
\hspace{0mm} \subfigure{\includegraphics[height=7mm]{plus-sign.eps}}
\hspace{0mm}
\subfigure{\includegraphics[height=12mm]{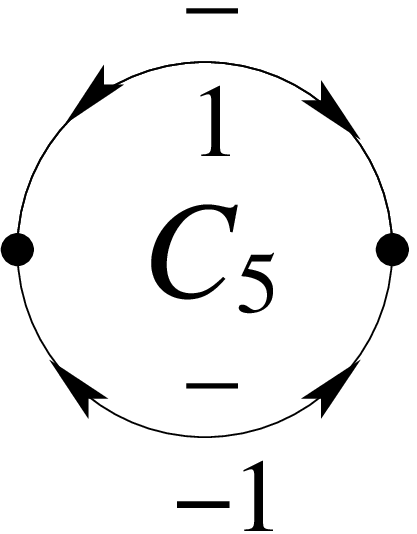}}
\hspace{0mm} \subfigure{\includegraphics[height=7mm]{plus-sign.eps}}
\hspace{0mm}
\subfigure{\includegraphics[height=12mm]{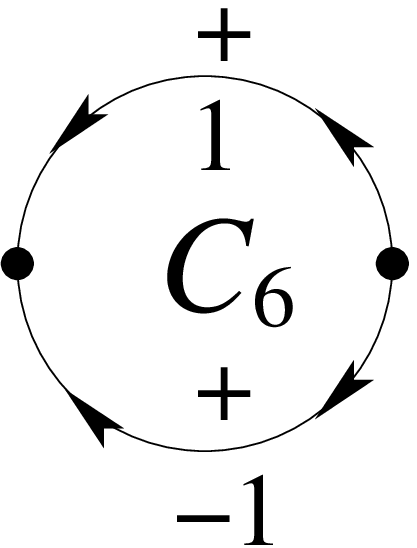}}
\caption{A conformally indecomposable flow is decomposed non-conformally but integrally into circuit flows.}
\label{AAAA}
\end{figure}

\begin{cor}\label{Pos-decom-integer-decom}
\begin{enumerate}[\rm (a)]
\item
If $f$ is a nonzero integral flow on a signed graph, then $2f$ can be conformally decomposed into a positive integral linear combination of signed-graph circuit flows.
\item
Every nonzero integral flow on a signed graph can be decomposed (conformally or non-conformally) into a positive integral linear combination of signed-graph circuit flows.
\end{enumerate}
\end{cor}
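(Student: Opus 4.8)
The plan is to read off both statements from the two classification theorems already proved---Theorem~\ref{T:flows} and Theorem~\ref{Half-Integer Scale Decomposition}---together with the elementary observation (recorded just above) that every nonzero integral flow decomposes conformally as a positive integral combination of conformally indecomposable flows. Part~(a) is the conformal statement and will come entirely from results of the present paper; part~(b) is the non-conforming statement and will come from the matroid-basis decomposition of \cite{Chen-Wang1}.

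For part~(a), first write $f=\sum_{i=1}^{k}f_i$, a conformal decomposition into conformally indecomposable flows, each $f_i$ conforming to $f$. By Theorem~\ref{T:flows} each $\Sigma(f_i)$ is a sesqui-Eulerian circle-tree $T_i$ with a direction $\omega_{f_i}$ and $f_i=[\omega,\omega_{f_i}]\,I_{T_i}$. Now apply Theorem~\ref{Half-Integer Scale Decomposition} to each $T_i$. If $T_i$ is a signed-graph circuit, then $f_i$ is itself a circuit flow and $2f_i=f_i+f_i$. Otherwise $I_{T_i}=\tfrac12\sum_j I_{T_{ij}}$ with every $T_{ij}$ a Type~III circuit, so
\[
2f_i=[\omega,\omega_{f_i}]\cdot 2I_{T_i}=\sum_j [\omega,\omega_{f_i}]\,I_{T_{ij}},
\]
and each $[\omega,\omega_{f_i}]\,I_{T_{ij}}$ is the characteristic vector of the directed circuit $(T_{ij},\omega_{f_i})$, hence a signed-graph circuit flow. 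Summing over $i$ expresses $2f=\sum_i 2f_i$ as a positive integral combination of circuit flows. Since the half-integral decomposition of Theorem~\ref{Half-Integer Scale Decomposition}(d) is conformal and each $f_i$ conforms to $f$, every circuit flow appearing has support in $\supp f$ and the correct sign there; thus the decomposition of $2f$ is conformal.

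For part~(b) I would bypass the circle-tree picture and use a matroid basis directly, which is exactly where the factor~$2$ can be dropped at the price of conformity. Choose a maximal independent edge set $B$ of the frame matroid of $\Sigma$; each edge $e\notin B$ determines a fundamental circuit and a circuit flow $g_e$ supported on it. By \cite[Theorem~4.9]{Chen-Wang1} these fundamental circuit flows form a $\mathbb Z$-basis of the flow lattice, so $f=\sum_{e\notin B} a_e\,g_e$ for some integers $a_e$. Because the negative of a signed-graph circuit flow is again a signed-graph circuit flow, I rewrite each term as $a_e g_e=|a_e|\,(\operatorname{sgn}(a_e)\,g_e)$, exhibiting $f$ as a positive integral combination of circuit flows. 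This combination is in general not conformal, which is precisely why part~(b) allows non-conforming decompositions.

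The only real subtlety, and the reason (a) must carry the factor~$2$ while (b) need not, is structural rather than computational. A conformally indecomposable flow $f=[\omega,\omega_f]\,I_T$ on a circle-tree $T$ that is \emph{not} a circuit admits no conformal splitting into two nonzero flows at all (that is the content of conformal indecomposability), so it certainly cannot be written as a conformal positive integral combination of two or more circuit flows; the half-integrality in Theorem~\ref{Half-Integer Scale Decomposition} is therefore unavoidable, each circle block being shared by two adjacent Type~III circuits. Consequently the whole of both parts is the bookkeeping of signs and coefficients: absorbing $[\omega,\omega_f]$ and $\operatorname{sgn}(a_e)$ into the circuit flows so that all coefficients become positive integers, and checking that the characteristic vectors $[\omega,\omega_{f_i}]\,I_{T_{ij}}$ are genuine circuit flows conforming to $f$.
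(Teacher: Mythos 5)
Your proof is correct and follows essentially the same route the paper intends: part (a) by combining the conformal decomposition into indecomposable flows with Theorem~\ref{T:flows} and the half-integral conformal decomposition of Theorem~\ref{Half-Integer Scale Decomposition}(d), and part (b) via the matroid-basis (fundamental circuit) decomposition of \cite[Theorem~4.9]{Chen-Wang1} with signs absorbed into the circuit flows. The paper states the corollary as a summary of exactly these observations, so your write-up is simply the fleshed-out version of its argument.
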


The following proposition is elementary but helps explain why there are
exactly three natural types of circuit for signed graphs, as introduced by
Zaslavsky \cite{SG}.

\begin{prop}
For a signed graph $\Sigma$, the following statements are equivalent.
\begin{enumerate}[\rm (a)]
\item $\Sigma$ is a minimal sesqui-Eulerian signed graph.
\item $\Sigma$ is a minimal prime sesqui-Eulerian signed graph.
\item $\Sigma$ is a minimal sesqui-Eulerian circle-tree.
\item $\Sigma$ is a signed-graph circuit.
\end{enumerate}
\end{prop}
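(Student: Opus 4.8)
The plan is to deduce the four-way equivalence almost entirely from results already in hand, isolating the single nontrivial implication. First I would observe that two of the equivalences are essentially restatements of earlier results. The equivalence (a)$\Leftrightarrow$(d) is exactly the content of Corollary~\ref{minimal}, which identifies minimal sesqui-Eulerian signed graphs with signed-graph circuits. The equivalence (b)$\Leftrightarrow$(c) comes from Proposition~\ref{prime}: since the class of prime sesqui-Eulerian signed graphs coincides with the class of sesqui-Eulerian circle-trees, a graph that is minimal in one class (i.e.\ properly contains no member of that class) is minimal in the other, the two minimality conditions being literally identical. Thus it remains only to link the pair (a)/(d) to the pair (b)/(c), for which I would prove (d)$\Rightarrow$(c) and (c)$\Rightarrow$(d).

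For (d)$\Rightarrow$(c): a signed-graph circuit is a sesqui-Eulerian circle-tree (as noted in the proof of Corollary~\ref{minimal}), and by (a)$\Leftrightarrow$(d) it is a \emph{minimal} sesqui-Eulerian signed graph, hence properly contains no sesqui-Eulerian signed subgraph whatsoever. Because every sesqui-Eulerian circle-tree is in particular a sesqui-Eulerian signed graph, minimality in the larger class forces minimality in the subclass of circle-trees, which is precisely (c).

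The genuine work is the converse (c)$\Rightarrow$(d), and this is the step I expect to be the main obstacle, since it must rule out all of the more elaborate circle-tree shapes. Here I would argue by contradiction using Theorem~\ref{Half-Integer Scale Decomposition}. Suppose $\Sigma$ is a minimal sesqui-Eulerian circle-tree that is \emph{not} a circuit. Then Theorem~\ref{Half-Integer Scale Decomposition} places us in its second alternative, producing a closed positive walk $P_1C_1P_2\cdots P_nC_n$ with $n\geq 2$ in which each segment $C_iP_{i+1}C_{i+1}$ (with $C_{n+1}=C_1$) is a directed circuit of Type~III, and hence in particular a sesqui-Eulerian circle-tree. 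The key point is that each such subgraph $T_i=C_i\cup P_{i+1}\cup C_{i+1}$ is a \emph{proper} subgraph of $\Sigma$: were some $T_i$ equal to $\Sigma$, then $\Sigma$ would itself be a Type~III circuit, contradicting the assumption that $\Sigma$ is not a circuit. Thus $\Sigma$ properly contains the sesqui-Eulerian circle-tree $T_1$, contradicting the minimality in (c). Therefore $\Sigma$ must be a circuit, establishing (d).

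Combining (a)$\Leftrightarrow$(d), (b)$\Leftrightarrow$(c), and (c)$\Leftrightarrow$(d) closes the chain $(a)\Leftrightarrow(d)\Leftrightarrow(c)\Leftrightarrow(b)$ and yields the full equivalence. The only point demanding care is the properness argument in (c)$\Rightarrow$(d); an alternative to invoking Theorem~\ref{Half-Integer Scale Decomposition} would be to apply Lemma~\ref{Minimality-of-Prime-Sesqui-Eulerian-Signed-Graph} directly, but the decomposition statement already packages the needed Type~III sub-circuits and is the more economical route.
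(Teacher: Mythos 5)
Your assembly is correct, but note that the paper itself gives no proof at all: it states the proposition as ``elementary,'' implicitly leaving it to be read off from Proposition~\ref{prime} and Corollary~\ref{minimal}, which is exactly what you do for (a)$\Leftrightarrow$(d) and (b)$\Leftrightarrow$(c). Where you genuinely go beyond the paper's implicit route is the bridge (c)$\Leftrightarrow$(d): you interpret ``minimal'' in (c) as minimality \emph{within the class} of sesqui-Eulerian circle-trees (no proper circle-tree subgraph), which is a weaker hypothesis than minimality among all sesqui-Eulerian signed graphs, and you then do real work, extracting from Theorem~\ref{Half-Integer Scale Decomposition} a proper Type~III sub-circuit $T_1=C_1\cup P_2\cup C_2$ of any non-circuit circle-tree; your properness argument (if some $T_i$ equalled $\Sigma$, then $\Sigma$ would itself be a Type~III circuit) is exactly right. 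Under the alternative, weaker reading of (b) and (c) --- ``minimal sesqui-Eulerian \emph{and} prime,'' resp.\ ``minimal sesqui-Eulerian \emph{and} a circle-tree'' --- the whole proposition collapses immediately out of ``minimal implies prime'' (stated after the definitions) together with Proposition~\ref{prime} and Corollary~\ref{minimal}, which is presumably why the authors call it elementary. Since your reading yields the formally stronger statement, your proof covers both readings; this is a legitimate, slightly more substantial route than the paper intends.

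One caution about your closing aside: Lemma~\ref{Minimality-of-Prime-Sesqui-Eulerian-Signed-Graph} is \emph{not} a viable substitute for Theorem~\ref{Half-Integer Scale Decomposition} here, and the suggestion is in fact backwards. That lemma asserts \emph{non}-containment --- a directed sesqui-Eulerian circle-tree properly contains no directed sesqui-Eulerian circle-tree carrying the inherited direction --- whereas (c)$\Rightarrow$(d) under your reading requires the \emph{existence} of a proper sub-circle-tree inside every non-circuit circle-tree, which the lemma cannot supply. Moreover, the sub-circuits $T_i$ you use are typically not circle-subtrees of $\Sigma$ in the paper's sense: their circuit paths run through arcs of non-end circle blocks, so their block paths are not block paths of $\Sigma$; this is exactly the situation the lemma's proof (which locates a vertex that is a cut-vertex of $T$ but not of $T'$ on a common circle block) does not address, and it is why the $T_i$ witness non-minimality without contradicting the lemma's intended use inside Theorem~\ref{T:flows}. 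Keep the decomposition theorem as your engine and drop the aside.
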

\vspace{1ex}

\section{Acknowledgement}
The authors thank the two referees
and the editor for carefully reading the manuscript and offering valuable
suggestions and comments.



\begin{thebibliography}{9}

\bibitem{AK} Gautam Appa and Bal\'azs Kotnyek,
A bidirected generalization of network matrices.
\emph{Networks} 47 (2006), 185--198.

\bibitem{B2} Ethan D.\ Bolker,
Bracing rectangular frameworks. II.
\emph{SIAM J.\ Appl.\ Math.} 36 (1979), 491--503.

\bibitem{HIB} Ethan D.\ Bolker and Thomas Zaslavsky,
A simple algorithm that proves half-integrality of bidirected network programming.
\emph{Networks} {\bf 48} (2006), no.\ 1, 36--38.

\bibitem{Bouchet} A.\ Bouchet,
Nowhere-zero integral flows on a bidirected graph.
\emph{J.\ Combin.\ Theory Ser.~B} {\bf 34} (1983), 279--292.

\bibitem{Chen-Wang1} Beifang Chen and Jue Wang,
The flow and tension spaces and lattices of signed graphs.
\emph{Europ.\ J.\ Combin.}\ {\bf 30} (2009), 263--279.

\bibitem{Chen-Wang2} Beifang Chen and Jue Wang,
Torsion formulas for signed graphs.
\emph{Discrete Appl.\ Math.}\ {\bf 158} (2010), 1148--1157.

\bibitem{Chen-Wang3} B.\ Chen and J.\ Wang,
Classification of indecomposable integral flows on signed graphs,
unpublished manuscript, 2013.  arXiv:1112.0642.

\bibitem{MMP} Jack Edmonds,
Maximum matching and a polyhedron with $0,1$-vertices.
\emph{J.\ Res.\ Nat.\ Bur.\ Standards Sect.~B} {\bf 69B} (1965), 125--130.

\bibitem{Geelen-Guenin} James F.\ Geelen and Bertrand Guenin,
Packing odd circuits in Eulerian graphs.
{\em J.\ Combin.\ Theory Ser.\ B} {\bf 86} (2002), 280--295.

\bibitem{Hoch} Dorit S.\ Hochbaum, Nimrod Megiddo, Joseph (Seffi) Naor, and Arie Tamir,
Tight bounds and 2-approximation algorithms for integer programs with two variables per inequality.
\emph{Math.\ Programming Ser.\ B} 62 (1993), 69--83.


\bibitem{SG} Thomas Zaslavsky,
Signed graphs, \emph{Discrete Appl.\ Math.}\ {\bf 4} (1982), 47--74.
Erratum, \emph{Discrete Appl.\ Math.}\ {\bf 5} (1983), 248.

\bibitem{BG1} Thomas Zaslavsky,
Biased graphs.  I.\  Bias, balance, and gains.
\emph{J.\ Combin.\ Theory Ser.\ B} {\bf 47} (1989), 32--52.

\bibitem{OSG} Thomas Zaslavsky,
Orientation of signed graphs.
\emph{Europ.\ J.\  Combin.}\ {\bf 12} (1991), 361--375.

\bibitem{MTS} Thomas Zaslavsky,
Matrices in the theory of signed simple graphs.
In: B.D.\ Acharya, G.O.H.\ Katona, and J.\ Nesetril, eds., \emph{Advances in Discrete Mathematics and Applications: Mysore, 2008} (Proc.\ Int.\ Conf.\ Discrete Math., ICDM-2008), pp.\ 207--229.
Ramanujan Math.\ Soc., Mysore, India, 2010.

\end{thebibliography}
\end{document}